\documentclass{article}
\usepackage{amsfonts}
\usepackage{amsmath}
\usepackage{amsfonts}
\usepackage{amssymb}
\usepackage{graphicx}
\usepackage{amsmath}
\usepackage[a4paper]{geometry}

\setcounter{MaxMatrixCols}{10}

\newtheorem{theorem}{Theorem}

\newtheorem{corollary}{Corollary}

\newtheorem{definition}{Definition}

\newtheorem{lemma}{Lemma}

\newtheorem{proposition}{Proposition}
\newtheorem{remark}{Remark}

\newenvironment{proof}[1][Proof]{\noindent\textbf{#1.} }{\ \rule{0.5em}{0.5em}}
\input{tcilatex}

\begin{document}

\title{Efficient Simulation-Based Minimum Distance Estimation and Indirect
Inference }
\author{Richard Nickl \\
Statistical Laboratory, University of Cambridge \\
and \and Benedikt M. P\"{o}tscher \\
Department of Statistics, University of Vienna}
\date{First version: March 2009\\
This version: June 2010}
\maketitle

\begin{abstract}
Given a random sample from a parametric model, we show how indirect
inference estimators based on appropriate nonparametric density estimators
(i.e., simulation-based minimum distance estimators) can be constructed
that, under mild assumptions, are asymptotically normal with
variance-covarince matrix equal to the Cram\'{e}r-Rao bound.
\end{abstract}

\section{Introduction\label{intro}}

Suppose we observe a random sample $X_{1},\ldots ,X_{n}$ from a distribution 
$P$, and we are in the classical situation where one maintains a \textit{%
parametric} model $\mathcal{M=}\{P(\theta ):\theta \in \Theta \}$ of
probability measures $P(\theta )$, indexed by the set $\Theta \subseteq 
\mathbb{R}^{b}$, for statistical inference. Under the assumption of correct
specification of the parametric model, i.e., $P=P(\theta _{0})$ for a
(unique) $\theta _{0}\in \Theta $, the maximum likelihood estimator (MLE) is
often a natural estimator of $\theta _{0}$ (as well as of $P(\theta _{0})$),
since it is \textit{asymptotically efficient} under well-known regularity
conditions.

There are several reasons, however, why maximum likelihood might
nevertheless not be the method of choice, and alternatives, that ideally are
also \textit{asymptotically efficient,} are of interest.

A first such reason is rather classical (e.g., Huber (1972), Beran (1977),
Millar (1981), Donoho and Liu (1988), Lindsay (1994)) and comes from
robustness considerations: A good estimator for $\theta _{0}$ should be
robust against misspecifications of $\mathcal{M}$. A lesson from the
above-mentioned literature is the following: If one wants an estimator of $%
\theta _{0}$ that is robust against perturbations of $P(\theta _{0})$ in
some metric $\chi (\cdot ,\cdot )$, then one should rather use `minimum
distance estimators' of the following form: if $\tilde{P}_{n}$ is a suitable
(typically nonparametric) $\chi $-consistent estimator of $P$, estimate $%
\theta $ by the minimizer over $\Theta $ of 
\begin{equation}
Q_{n}(\theta ):=\chi (\tilde{P}_{n},P(\theta )).  \label{mindist}
\end{equation}%
Under several assumptions, Beran (1977) showed the interesting result that,
if $\chi $ is the Hellinger-distance, and if $\tilde{P}_{n}$ is some kernel
density estimator, such minimum-distance estimators are not only robust, but
actually simultaneously \textit{asymptotically efficient}, so that they
outperform the MLE in this sense. We will discuss the asymptotic efficiency
aspect of his result in more detail below.

A second, more practical reason against the use of the MLE that has arisen
in recent applications in econometrics and biostatistics is related to the
fact that in these applications analytic expressions for the densities in
the parametric model, and hence for the likelihood function, are not
available (or intractable for numerical purposes). For example, the data may
be modeled by an equation of the form $X_{i}=g(\varepsilon _{i},\theta _{0})$%
, but the implied parametric density may not be analytically tractable,
e.g., because $g$ is complicated or $\varepsilon _{i}$ is high-dimensional.
The same problem occurs naturally also in estimation of dynamic nonlinear
models including stochastic differential equations, we refer to Smith
(1993), Gourieroux, Monfort and Renault (1993), Gallant and Tauchen (1996),
Gallant and Long (1997) and the monograph Gourieroux and Monfort (1996) for
several concrete examples. This problem has led to a growing literature
about so-called \textit{indirect inference} methods, where other estimators
than the MLE are suggested, often based on simulations, see the just
mentioned references and Jiang and Turnbull (2004). From a conceptual point
of view, the main idea behind the indirect inference approach can be phrased
as follows:

\begin{enumerate}
\item Simulate a sample $X_{1}(\theta ),...,X_{k}(\theta )$ of size $k$ from
the distribution $P(\theta )$ for $\theta \in \Theta $ (which is often
possible in the examples alluded to above, e.g., by perusing the equations
defining the model; see also Remark \ref{Neu1}).

\item Based on the simulated sample \textit{as well as} on the true data,
compute estimators $\tilde{P}_{k}(\theta )$ and $\tilde{P}_{n}$ in a not
necessarily correctly-specified but numerically tractable auxiliary model $%
\mathcal{M}^{aux}$. [For example, by maximum likelihood if $\mathcal{M}%
^{aux} $ is finite-dimensional.]

\item Choose a suitable metric $\chi $ on $\mathcal{M}^{aux}$, and estimate $%
\theta _{0}$ by minimizing over $\Theta $ the objective function 
\begin{equation}
\mathcal{Q}_{n,k}(\theta ):=\chi (\tilde{P}_{n},\tilde{P}_{k}(\theta )).
\label{mindist1}
\end{equation}
\end{enumerate}

In most of the indirect inference literature, the auxiliary model $\mathcal{M%
}^{aux}$ is also finite-dimensional (so that one in fact estimates a
finite-dimensional parameter in Step 2 rather than the probability measure
directly), and the resulting procedure can be shown to be consistent and
asymptotically normal (under standard regularity conditions, see Gourieroux
and Monfort (1996)). However, the procedure is asymptotically efficient only
if $\mathcal{M}^{aux}$ happens to be \textit{correctly specified}. This
assumption is certainly restrictive and often unnatural if $\mathcal{M}%
^{aux} $\ is of fixed finite dimension. Therefore Gallant and Long (1997)
suggested that choosing $\mathcal{M}^{aux}$ with dimension increasing in
sample size should result in estimators that are asymptotically efficient,
the idea being that this essentially amounts to choosing an
infinite-dimensional auxiliary model $\mathcal{M}^{aux}$ for which the
assumption of correct specification is much less restrictive.

\textit{In the present paper we show in some generality that indirect
inference estimators based on suitable nonparametric estimators $\tilde{P}%
_{n}$ and $\tilde{P}_{k}(\theta )$ with common choices for the tuning
parameters (`sieve'-dimensions), including rate-optimal choices, are
asymptotically efficient in the sense that they are asymptotically normal
with asymptotic variance equal to the Cram\'{e}r-Rao bound.} To the best of
our knowledge, no proof of this fact was known before, although there are
some related results that need mentioning. We comment on the literature in
some detail below, but first wish to discuss the main ideas behind our
results. [Robustness issues, misspecification of $\mathcal{M}$, as well as
uniformity in the asymptotic normality result are not treated explicitly in
this paper; for the latter two issues in a related context see Gach (2010).]

From the discussion so far it transpires that indirect inference estimators
from (\ref{mindist1}) are \textit{minimum distance estimators}, with the
important (and nontrivial) modification that $P(\theta )$ in (\ref{mindist})
is replaced by an \textit{estimator} based on simulations from $P(\theta )$.
It is therefore of interest to first briefly revisit Beran's (1977)
asymptotic efficiency result: For simplicity, consider the Fisher-metric $%
\chi _{F}(f,g)^{2}:=\int (f-g)^{2}p_{0}^{-1}$, where $p_{0}$ is the density
of $P$, instead of the Hellinger distance. [Note that the Fisher-metric is
closely related to the Hellinger distance when $f$ and $g$ are near $p_{0}$%
.] If $\hat{\theta}_{n}$ is the minimizer of $Q_{n}$ in (\ref{mindist}),
then, after a suitable Taylor expansion, asymptotic efficiency of $\sqrt{n}(%
\hat{\theta}_{n}-\theta _{0})$ essentially reduces to proving two separate
results: The first is to prove asymptotic normality for the gradient of (\ref%
{mindist}) at $\theta _{0}$, namely 
\begin{equation}
\sqrt{n}\int s(\theta _{0})d(\tilde{P}_{n}-P(\theta _{0})),  \label{score}
\end{equation}%
where the `influence function' $s(\theta _{0})$ equals $\nabla _{\theta
}p(\theta _{0})p_{0}^{-1}$. Note that $s(\theta _{0})$ coincides with the 
\textit{efficient} influence function in this problem, showing that $\chi
=\chi _{F}$ is a natural choice. The second step is to control the remainder
term in the Taylor expansion, which essentially requires convergence of $%
\tilde{P}_{n}$ to $P=P(\theta _{0})$ (in the sense of $L^{p}$-convergence of
the respective densities for certain values of $p$). Beran (1977) implicitly
proved these two results under relatively restrictive conditions if $\tilde{P%
}_{n}$ is a kernel density estimator with certain bandwidths, and if $\chi $
is the Hellinger metric. It is typically not sensible (and for the most
interesting metrics $\chi $ in fact not possible) to take $\tilde{P}_{n}$ to
be the \textit{empirical measure} itself, but rather $\tilde{P}_{n}$ should
be some smoothed version of it. In this case, one cannot directly apply a
standard central limit theorem to (\ref{score}). However, recent results in
empirical process theory (Nickl (2007), Gin\'{e} and Nickl (2008, 2009b))
establish exactly such limit theorems for various density estimators.
Furthermore, these limit theorems also hold for density estimators that
simultaneously deliver optimal convergence rates in $L^{p}$-type loss
functions, which is potentially relevant for good control of the remainder
term. (We should note that this simultaneous optimality property is related
to what Bickel and Ritov (2003) label the 'plug-in property' of the density
estimator $\tilde{P}_{n}$, cf.~also Section 3 in Nickl (2007) for more
discussion.) Using similar methods we first prove a Beran-type result
(Theorem \ref{0815}), under quite weak (if not sharp) conditions, for the
case where $\chi =\chi _{F}$ (but with the unknown $p_{0}$ replaced by an
estimator), and where the underlying nonparametric estimator is based on a $%
\mathcal{L}^{2}$-projection of the empirical measure onto spaces of
piecewise polynomials spanned by dyadic $B$-splines.

\bigskip

Once asymptotic normality of the minimum distance estimator in (\ref{mindist}%
) is established, the question arises how the simulation step in (\ref%
{mindist1}) should be approached. Here two proof strategies arise:

\begin{enumerate}
\item The first method is to show that the objective function $\mathcal{Q}%
_{n,k}$ \textit{with} simulations is stochastically close, uniformly over $%
\Theta $, to the objective function $Q_{n}$ where no simulation is
performed. If 
\begin{equation}
\sup_{\theta \in \Theta }|\mathcal{Q}_{n,k}(\theta )-Q_{n}(\theta )|
\label{wurz}
\end{equation}%
has a sufficiently fast rate of convergence to zero (in probability), then
it is not difficult to show, using a result from Gach (2010), that the
asymptotic distribution of the simulated indirect inference estimator
obtained from minimizing (\ref{mindist1}) is the \textit{same} as the one of
the classical minimum distance estimator discussed in the previous
paragraph. It turns out that proving that the expression in (\ref{wurz}) has
a sufficiently fast rate of convergence to zero can be done by deriving
sharp bounds for the stochastic processes 
\begin{equation*}
\left\{ \sqrt{n}\int fd(\tilde{P}_{k}(\theta )-P(\theta ))\right\} _{\theta
\in \Theta ,f\in \mathcal{F}},
\end{equation*}%
where $\mathcal{F}$ is a relevant class of functions, and again we can apply
recent techniques from empirical processes here (cf. Nickl (2007), Gin\'{e}
and Nickl (2008, 2009b) together with moment inequalities in Gin\'{e} and
Koltchinskii (2006)). We prove that if one performs simulations of order $%
k>>n^{2}$, then the indirect inference estimators are asymptotically
equivalent to the classical minimum distance estimators. A main advantage of
this proof strategy is that \textit{no} differentiability properties of the
objective function $\mathcal{Q}_{n,k}$ have to be used, and that in turn a
large class of simulation mechanisms is admissible. More importantly, this
proof strategy allows for the presumably critical condition $\tau >1/2$ on
the underlying density $p_{0}$, where $\tau $ is the index governing the
regularity of $p_{0}$.

\item The method of proof described above works if many simulations are
performed ($k>>n^{2}$). However, this condition is not intrinsic to the
problem, and the case where the number of simulations $k$ is of a smaller
order than $n^{2}$ is also of interest. In particular, in the case where $%
k/n\rightarrow \kappa $, $0<\kappa <\infty $, one has to expect that the
asymptotic variance of simulated indirect inference estimators is inflated
by the factor $(1+1/\kappa )$. If one is interested in these cases, the
(comparably) `brute force' methods described in the previous paragraph
cannot be used. Alternatively, one can try to apply the usual $M$-estimation
asymptotic normality proof to the criterion function $\mathcal{Q}%
_{n,k}(\theta )$. Among other things this requires differentiation of the
simulated estimators $P_{k}(\theta )$ with respect to $\theta $. Since $%
P_{k}(\theta )$ is constructed by applying an \textit{approximate identity}
to the empirical measure from the simulated sample, the proofs become more
delicate in this case. [Differentiating an approximate identity $%
h^{-1}K(X(\theta )/h)$ w.r.t.~$\theta $ introduces a 'penalty' of an
additional $h^{-1}$ from the chain rule.] We are able, nevertheless, to
establish asymptotic normality of the simulated indirect inference estimator
with these simulation sizes as well, under slightly stronger conditions (on
the underlying density and the simulation mechanism), and with the expected
inflation of variances if $\lim_{n}k/n<\infty $. Again, the empirical
process techniques mentioned in the previous paragraphs, together with some
facts from approximation theory, are central to our proofs.
\end{enumerate}

We should comment on some related literature. Related papers are Gallant and
Long (1997) and Fermanian and Salani\'{e} (2004). The first paper studies
the case where $\tilde{P}_{n}$ is based on nonparametric MLEs over sieves
spanned by Hermite-polynomials, but their limiting result is only
informative if the sieve dimension stays bounded (so that efficiency of the
estimator is only established if the true density is a \textit{finite}
linear combination of Hermite-polynomials). Fermanian and Salani\'{e} (2004)
propose different (but somewhat related) procedures, and establish
asymptotic efficiency of their estimators under several high level
conditions, which, as they admit themselves, are very stringent. Even in the
simplest model they consider, they need to have simulations of order $k\sim
n^{6}$, and the nonparametric estimators considered seem to be only sensible
if the true density is very smooth. There are also some other related recent
papers on this topic, Altissimo and Mele (2009) and Carrasco, Chernov,
Florens, Ghysels (2007), whose proofs, however, we were not able to follow.

The outline of the paper is as follows: After some preliminaries in Section
2, we introduce the model and assumptions, define the auxiliary spline
projection estimators as well as the indirect inference estimator in Section
3 and present the main result (Theorem \ref{main}) on asymptotic efficiency
of the indirect inference estimator. Some basic facts on dyadic splines are
summarized in Section 4. Section 5 is devoted to the proof of Theorem \ref%
{main}. Section 6 develops auxiliary convergence rate results for the
auxiliary spline projection estimators needed in the proof of Theorem \ref%
{main}. Section 7 establishes a uniform central limit theorem for spline
projection estimators that is also essential in the proof of the main
result. Three appendices contain further technical results on Besov spaces,
projections onto Schoenberg spaces, and moment inequalities for empirical
processes.

\section{Preliminaries and Notation}

We denote the Euclidean norm of a vector $x\in \mathbb{R}^{b}$ by $%
\left\Vert x\right\Vert $ and the associated operator norm of a matrix $A$
by $\left\Vert A\right\Vert $. With $\mathcal{L}^{p}:=\mathcal{L}%
^{p}([0,1],\lambda )$, $1\leq p<\infty $, we denote the vector space of
Borel-measurable $p$-fold integrable real-valued functions on $[0,1]$, where 
$\lambda $ denotes Lebesgue measure on $[0,1]$, the (semi)norm on $\mathcal{L%
}^{p}$\ being denoted by $\left\Vert h\right\Vert _{p}$. Furthermore, $%
\left\Vert h\right\Vert _{\infty }$ stands for the supremum norm (\textit{not%
} the essential supremum norm) of a real-valued function $h$ defined on $%
[0,1]$. If $H$ is a vector- or matrix-valued function on $[0,1]$ then $%
\left\Vert H\right\Vert _{p}$ is shorthand for $\left\Vert \left\Vert
H\right\Vert \right\Vert _{p}$ and similarly for the supremum norm. By $%
\mathsf{L}^{\infty }$ we denote the space of all bounded Borel-measurable
real-valued functions on $[0,1]$ endowed with the supremum norm. For a
(measurable) real-valued function $g$ on $\mathbb{R}$ and $1\leq p<\infty $
we write $\left\Vert g\right\Vert _{p,\mathbb{R}}$ to denote its $\mathcal{L}%
^{p}$-(semi)norm (w.r.t. Lebesgue measure on $\mathbb{R}$); and we write $%
\left\Vert g\right\Vert _{\infty ,\mathbb{R}}$ for the supremum norm (not
the essential supremum norm). For sequences $a_{n}$ and $b_{n}$ of positive
real numbers we write $a_{n}\sim b_{n}$ to denote the fact that the sequence 
$a_{n}/b_{n}$ is bounded away from zero and infinity.

We next introduce Besov spaces. For a function $g:\mathbb{R}\rightarrow 
\mathbb{R}$ and $z\in \mathbb{R}$, the difference operator $\Delta _{z}$ is
defined by $\Delta _{z}g(\cdot )=g(\cdot +z)-g(\cdot )$ and inductively by $%
\Delta _{z}^{a}g(\cdot )=\Delta _{z}(\Delta _{z}^{a-1}g(\cdot ))$ for
integer $a\geq 2$. For $h:[0,1]\rightarrow \mathbb{R}$, we define $\Delta
_{z}^{a}(h)(x)$ as above if $x,x+az\in \lbrack 0,1]$, and set $\Delta
_{z}^{a}(h)(x)=0$ otherwise. For $0<s<\infty $ we define function spaces $%
\mathcal{B}_{s}$ on $[0,1]$ as follows.

\begin{definition}
\label{intrinsic}For $s\in (0,\infty )$, $a\in (s,\infty )\cap \mathbb{N}$,
and $h\in \mathcal{L}^{2}$ define 
\begin{equation*}
\Vert h\Vert _{s,2}:=\Vert h\Vert _{2}+\sup_{0\neq |z|<1}|z|^{-s}\Vert
\Delta _{z}^{a}(h)\Vert _{2}.
\end{equation*}%
Define further 
\begin{equation*}
\mathcal{B}_{s}:=\mathcal{B}_{2\infty }^{s}=\{h\in \mathcal{L}^{2}:\Vert
h\Vert _{s,2}<\infty \}.
\end{equation*}
\end{definition}

The space $\mathcal{B}_{s}$ does not depend on $a$ in the sense that
different choices of $a>s$ result in equivalent (semi)norms. For
definiteness we shall always choose $a$ to be the smallest integer larger
than $s$ in the sequel. It is well-known (Proposition \ref{elem} in Appendix %
\ref{App_Spline}) that for $s>1/2$ every function in $\mathcal{B}_{s}$ is $%
\lambda $-almost everywhere equal to a (uniquely determined) \textit{%
continuous} function in $\mathcal{B}_{s}$. It thus proves useful to define
for $s>1/2$ the Banach-space $(\mathsf{B}_{s},\Vert \cdot \Vert _{s,2})$
where $\mathsf{B}_{s}=\mathcal{B}_{s}\cap \mathsf{C}([0,1])$ and $\mathsf{C}%
([0,1])$ denotes the set of continuous real-valued functions on $[0,1]$.

A little reflection shows that $\mathcal{B}_{s}$ is just the usual Besov (or
generalized Lipschitz) space $\mathcal{B}_{2\infty }^{s}$ as, e.g., defined
in Chapter 2, Section 10 of DeVore and Lorentz (1993) (with the only
difference that there $\mathcal{B}_{s}$ is viewed as a space of equivalence
classes of functions). The space $\mathcal{B}_{s}$ contains the classical
Sobolev space of order $s$ as a subset. Recall that for \emph{integer} $s$
the Sobolev space of order $s>0$ is given by%
\begin{equation*}
\mathcal{W}_{2}^{s}=\left\{ h\in \mathcal{L}^{2}:D_{w}^{i}h\in \mathcal{L}%
^{2}\text{ for }0\leq i\leq s\text{, }i\text{ integer}\right\} ,
\end{equation*}%
where $D_{w}$ denotes the weak differential operator. Then for integer $s>0$%
\begin{equation}
\Vert h\Vert _{s,2}\leq C(s)\sum_{0\leq i\leq s}\Vert D_{w}^{i}h\Vert _{2}
\label{sob}
\end{equation}%
holds for some universal constant $C(s)$ and all $h$ in the Sobolev space of
order $s$; cf.~p.46 and p.52f in DeVore and Lorentz (1993). Some further
properties of Besov spaces and their relationship to splines that we shall
need in the sequel are summarized in Appendix \ref{App_Spline}.

\section{Main Results\label{indinf}}

Let $X_{1},\ldots ,X_{n}$ be independent and identically distributed
(i.i.d.) on a compact interval in $\mathbb{R}$ with law $P$ and
Lebesgue-density $p_{0}$. Without loss of generality we shall take this
interval to be $[0,1]$. We assume that a parametric model $\mathcal{P}%
_{\Theta }$ is given, i.e., $\mathcal{P}_{\Theta }=\{p(\theta ):\theta \in
\Theta \}$, where the \emph{functions} $p(\theta ):[0,1]\rightarrow \mathbb{R%
}$ are probability densities and the parameter space $\Theta $ is a subset
of $\mathbb{R}^{b}$. The probability measure on $[0,1]$ corresponding to $%
p(\theta )$ will be denoted by $P(\theta )$. We consider here the case where
direct likelihood methods for estimation of $\theta $ cannot be used for the
reasons outlined in the introduction. Suppose, however, that it is feasible
to obtain for each $\theta \in \Theta $ simulated data $X_{i}(\theta )$ via 
\begin{equation}
X_{i}(\theta )=\rho (V_{i},\theta ),\quad \quad i=1,...,k,  \label{Sim}
\end{equation}%
that are distributed i.i.d. with density $p(\theta )$ and that are
independent of the original sample. [The simulation mechanism may result
from an equation for the data as described in Section \ref{intro}, but may
also be obtained in some other way.] More precisely, we assume that the
random variables $V_{i}$ driving the simulation mechanism are i.i.d. with
values in some measurable space $(\mathcal{V},\mathfrak{V})$, the
distribution on $\mathcal{V}$ induced by $V_{i}$ being denoted by $\mu $;
furthermore, we assume that for every $\theta \in \Theta $, the $\mathfrak{V}
$-measurable function $\rho (\cdot ,\theta ):\mathcal{V}\rightarrow \lbrack
0,1]$ is such that the law of $\rho (V_{i},\theta )$ has density $p(\theta )$%
; and that the collection of random variables $\{V_{i}\}$ is independent of
the collection $\{X_{i}\}$. As the main result depends only on the \textit{%
distribution} of the random variables $X_{i}$ and $V_{i}$, we can assume
without loss of generality that the original data $X_{i}$ as well as the
variables $V_{i}$ are defined as the respective coordinate projections on
the product probability space $([0,1]^{\infty }\times \mathcal{V}^{\infty },%
\mathfrak{B}_{[0,1]}^{\infty }\otimes \mathfrak{V}^{\infty },P^{\infty
}\otimes \mu ^{\infty })$; we shall denote by $\Pr $ the product probability
measure $P^{\infty }\otimes \mu ^{\infty }$. The basic framework outlined
above will be maintained throughout the rest of the paper.

\begin{remark}
\label{Neu1}To avoid possible misunderstanding we note the following: (i)
Equation (\ref{Sim}) implies that one needs to obtain \emph{one and only} 
\emph{one }simulated sample $V_{1},\ldots ,V_{k}$ in order to compute $%
X_{i}(\theta )$ for any $\theta \in \Theta $. There is no need to separately
draw random samples for every $\theta $. (ii) Simulation mechanisms like (%
\ref{Sim}) naturally occur in the domain of application of indirect
inference which consists of statistical models where the data $X_{i}$ are
assumed to arise as the output of an equation that is parameterized by $%
\theta $ and is driven by some stochastic noise variables. These stochastic
noise variables then often play the r\^{o}le of $V_{i}$.
\end{remark}

We next construct auxiliary estimators for $p_{0}$ from the original data as
well as from the simulated data. The estimator of $p_{0}$ based on the
original data is a spline projection estimator based on B-splines of order $%
r_{\ast }\geq 1$ and is given by%
\begin{equation*}
p_{n,j,r_{\ast }}(y)=\sum_{l=-r_{\ast }+1}^{2^{j}-1}\hat{\gamma}%
_{lj}^{(r_{\ast })}N_{lj}^{(r_{\ast })}(y)
\end{equation*}%
with 
\begin{equation*}
\hat{\gamma}_{lj}^{(r_{\ast })}=\sum_{m=-r_{\ast
}+1}^{2^{j}-1}2^{j}g_{j}^{(r_{\ast })lm}\int_{[0,1]}N_{mj}^{(r_{\ast
})}(x)dP_{n}(x).
\end{equation*}%
Here $N_{lj}^{(r_{\ast })}$ denote the B-spline basis functions forming a
basis for the Schoenberg space $\mathcal{S}_{j}(r_{\ast })$ and the
coefficients $g_{j}^{(r_{\ast })lm}$ are the elements of $2^{-j}$ times the
inverse of the Gram matrix of the B-spline basis $N_{lj}^{(r_{\ast })}$; see
Section \ref{splines} for definitions. Furthermore, $P_{n}=n^{-1}%
\sum_{i=1}^{n}\delta _{X_{i}}$ denotes the empirical measure of the original
data. The positive integer $j$ represents a tuning parameter that governs
the dimension of the approximating space (`sieve') spanned by the B-spline
basis. Similarly, from each simulated data set $X_{i}(\theta )$, we
construct estimators for $p(\theta )$ based on order-$r$ B-splines via%
\begin{equation}
p_{k,J,r}(\theta )(y)=\sum_{l=-r+1}^{2^{J}-1}\hat{\gamma}_{lJ}^{(r)}(\theta
)N_{lJ}^{(r)}(y)  \label{aux_est_2a}
\end{equation}%
with%
\begin{equation}
\hat{\gamma}_{lJ}^{(r)}(\theta
)=\sum_{m=-r+1}^{2^{J}-1}2^{J}g_{J}^{(r)lm}%
\int_{[0,1]}N_{mJ}^{(r)}(x)dP_{k}(\theta )(x)  \label{aux_est_2b}
\end{equation}%
and $P_{k}(\theta )=k^{-1}\sum_{i=1}^{k}\delta _{X_{i}(\theta )}$. Note that 
$r_{\ast }$ and $r$ need not take the same value, nor need $j$ and $J$. [For
example, $r=4$ would correspond to using cubic splines for the construction
of $p_{k,J,r}(\theta )$, while $r_{\ast }=1$ would correspond to using the
Haar basis for the construction of $p_{n,j,r_{\ast }}$.] In the sequel we
shall often write $p_{k,J,r}(\theta ,y)$ for$\ p_{k,J,r}(\theta )(y)$ and
similarly $p(\theta ,x)$ for $p(\theta )(x)$.

The idea behind indirect inference is that, given the parametric model is
correctly specified in the sense that $p_{0}=p(\theta _{0})$ $\lambda $%
-almost everywhere for some $\theta _{0}\in \Theta $, the particular value
of $\theta $ corresponding to the simulation-based estimator $%
p_{k,J,r}(\theta )$ closest to $p_{n,j,r_{\ast }}$ (in an appropriate
metric) should provide a reasonable estimator $\hat{\theta}_{n,k}$ of $%
\theta _{0}$, since $p_{n,j,r_{\ast }}$ will estimate $p_{0}=p(\theta _{0})$
($\lambda $-a.e.) consistently (under appropriate assumptions and choices of 
$j$, $J$, and $k$). That is, as explained in Section \ref{intro}, the
estimator $\hat{\theta}_{n,k}$\ can be viewed as a simulation-based version
of a minimum distance estimator.

To implement this idea we introduce the indirect inference objective
function measuring closeness of $p_{n,j,r_{\ast }}$and $p_{k,J,r}(\theta )$%
\begin{equation}
\mathcal{Q}_{n,k}(\theta ):=\mathcal{Q}_{n,k,j,J,r_{\ast },r}(\theta
)=\left\{ 
\begin{array}{cc}
\int_{0}^{1}(p_{n,j,r_{\ast }}-p_{k,J,r}(\theta ))^{2}p_{n,j,r_{\ast
}}^{-1}d\lambda & \text{ \ on the event }A_{n} \\ 
0 & \text{otherwise}%
\end{array}%
\right. ,  \label{objective}
\end{equation}%
where $A_{n}=\left\{ p_{n,j_{n},r_{\ast }}(y)>0\text{ for every }y\in
\lbrack 0,1]\right\} $, which is measurable as is easily seen. Note that $%
\mathcal{Q}_{n,k}(\theta ):[0,1]^{\infty }\times \mathcal{V}^{\infty
}\rightarrow \mathbb{R}$ is $\mathfrak{B}_{[0,1]}^{\infty }\otimes \mathfrak{%
V}^{\infty }$-measurable for every $\theta \in \Theta $ as a consequence of
Tonelli's Theorem since $p_{n,j,r_{\ast }}$ and $p_{k,J,r}(\theta )$ are
both jointly measurable (w.r.t. the combined data and the argument $y$) and
since $A_{n}$ is measurable. Furthermore, since all functions involved are
piecewise polynomials with dyadic breakpoints, the integral featuring in the
definition of $\mathcal{Q}_{n,k}(\theta )$ can be computed in a numerically
efficient way.

\begin{remark}
(i) We have chosen to assign $\mathcal{Q}_{n,k}(\theta )$ the value zero on
the complement of $A_{n}$ for convenience. Since the event $A_{n}$ will be
seen to have probability approaching $1$ under our assumptions, this
particular assignment is irrelevant for asymptotic considerations. However,
from a more practical point of view, one might want to use the objective
function $\int_{p_{n,j,r_{\ast }}>0}(p_{n,j,r_{\ast }}-p_{k,J,r}(\theta
))^{2}p_{n,j,r_{\ast }}^{-1}d\lambda $ instead, which clearly coincides with 
$\mathcal{Q}_{n,k}$ on $A_{n}$.

(ii) In principle, auxiliary estimators other than spline projection
estimators could be used in the definition of $\mathcal{Q}_{n,k}(\theta )$.
We do not pursue this in this paper but see Gach (2010). We note that
standard kernel density estimators are inappropriate here because of
boundary effects.
\end{remark}

An \textit{indirect inference estimator} $\hat{\theta}_{n,k}:=\hat{\theta}%
_{n,k,j,J,r_{\ast },r}$ is now defined to be any measurable function that
satisfies 
\begin{equation}
\inf_{\theta \in \Theta }\mathcal{Q}_{n,k}(\theta )=\mathcal{Q}_{n,k}(\hat{%
\theta}_{n,k}).  \label{iie}
\end{equation}%
For the sake of simplicity, we shall use the abbreviation $\mathcal{Q}_{n,k}$
to denote $\mathcal{Q}_{n,k,j,J,r_{\ast },r}$ as well as $\mathcal{Q}%
_{n,k,j_{n},J_{k},r_{\ast },r}$, the precise meaning always being clear from
the context. [A similar comment applies to $\hat{\theta}_{n,k}$, as well as
to $Q_{n}$ and $\hat{\theta}_{n}$ defined later in Section \ref{intermed}.]
That such an estimator exists is shown in the next proposition, the proof of
which can be found in Appendix \ref{App_Meas}.

\begin{proposition}
\label{exist} Suppose $\Theta $ is compact in $\mathbb{R}^{b}$ and that the
simulation mechanism $\rho (v,\cdot )$ is continuous on $\Theta $ for every $%
v\in \mathcal{V}$. Furthermore, assume that $r_{\ast }\geq 1$ and $r\geq 2$
hold. Then there exists a $\mathfrak{B}_{[0,1]}^{\infty }\otimes \mathfrak{V}%
^{\infty }$-measurable mapping $\hat{\theta}_{n,k}$ satisfying (\ref{iie}).
\end{proposition}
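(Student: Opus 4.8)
The plan is to show that, for each fixed realization of the data, the map $\theta \mapsto \mathcal{Q}_{n,k}(\theta)$ is continuous on the compact set $\Theta$, so that the infimum in (\ref{iie}) is attained pointwise, and then to invoke a measurable selection theorem to produce a selector $\hat{\theta}_{n,k}$ of the argmin correspondence that is jointly measurable in the data.

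First I would fix a realization $\omega=(x_{1},x_{2},\ldots ;v_{1},v_{2},\ldots )$ and establish continuity of $\theta \mapsto \mathcal{Q}_{n,k}(\theta )$. Since the event $A_{n}$ depends only on the original data and not on $\theta$, there are two cases. Off $A_{n}$ the objective is identically zero and continuity is trivial. On $A_{n}$ it suffices to observe that $X_{i}(\theta )=\rho (V_{i},\theta )$ is continuous in $\theta$ by hypothesis, that the B-spline basis functions $N_{mJ}^{(r)}$ are continuous on $[0,1]$ precisely because $r\geq 2$ (this is exactly where that assumption enters; for $r=1$ the Haar basis is discontinuous and the argument breaks down), and hence that each coefficient $\hat{\gamma}_{lJ}^{(r)}(\theta )$ in (\ref{aux_est_2b}), being a finite linear combination of the compositions $N_{mJ}^{(r)}(\rho (V_{i},\theta ))$, is continuous in $\theta$. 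Consequently the coefficient vector $\theta \mapsto (\hat{\gamma}_{lJ}^{(r)}(\theta ))_{l}\in \mathbb{R}^{2^{J}+r-1}$ is continuous. On $A_{n}$ the weight $p_{n,j,r_{\ast }}^{-1}$ is a fixed bounded function (the positive piecewise-polynomial $p_{n,j,r_{\ast }}$ being bounded away from zero on $[0,1]$), so the integral in (\ref{objective}) is a fixed quadratic polynomial in the coefficient vector, with finite coefficients $\int N_{lJ}^{(r)}N_{l'J}^{(r)}p_{n,j,r_{\ast }}^{-1}$ and the like. Composing this continuous (polynomial) function with the continuous coefficient map yields continuity of $\mathcal{Q}_{n,k}(\cdot )$ on $A_{n}$.

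Given continuity and compactness of $\Theta$, the infimum in (\ref{iie}) is attained for every $\omega$, so the argmin correspondence $\Psi (\omega )=\{\theta \in \Theta :\mathcal{Q}_{n,k}(\theta )(\omega )=\inf_{\theta '}\mathcal{Q}_{n,k}(\theta ')(\omega )\}$ has nonempty, closed (hence compact) values. To obtain a measurable selector I would verify that $(\omega ,\theta )\mapsto \mathcal{Q}_{n,k}(\theta )(\omega )$ is a Carath\'{e}odory function: it is $\mathfrak{B}_{[0,1]}^{\infty }\otimes \mathfrak{V}^{\infty }$-measurable in $\omega$ for each fixed $\theta$ (already noted in the text via Tonelli's theorem) and continuous in $\theta$ for each fixed $\omega$ (just shown), and is therefore jointly measurable, $\Theta \subseteq \mathbb{R}^{b}$ being separable. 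The measurable maximum theorem (e.g., Aliprantis and Border, Theorem 18.19) then applies: the value function is measurable and $\Psi$ admits a measurable selection $\hat{\theta}_{n,k}$, which is the required estimator satisfying (\ref{iie}).

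The only genuine obstacle is the interplay between measurability and the optimization, and this is exactly what the measurable selection theorem resolves; the remainder is routine once one observes that the hypothesis $r\geq 2$ is precisely what makes the simulated estimator $p_{k,J,r}(\theta )$ depend continuously on $\theta$, whereas $r_{\ast }\geq 1$ suffices for $p_{n,j,r_{\ast }}$, which does not depend on $\theta$ at all. A reader wishing to avoid citing the measurable maximum theorem could instead build the selector by hand, using a countable dense grid $\{\theta _{m}\}\subseteq \Theta$ to write the value function as the measurable countable infimum $\inf_{m}\mathcal{Q}_{n,k}(\theta _{m})$ and then selecting, and refining, measurably among near-minimizing grid points, with continuity guaranteeing convergence to an exact minimizer; but the selection-theorem route is cleaner.
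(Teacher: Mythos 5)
Your proposal is correct and follows essentially the same route as the paper's own proof: continuity of $\theta \mapsto \mathcal{Q}_{n,k}(\theta )$ on the compact set $\Theta $ for each realization (trivial off $A_{n}$; on $A_{n}$ via continuity of $\rho (v,\cdot )$ and of the order-$r$ B-splines, which is where $r\geq 2$ enters), measurability in the data for each fixed $\theta $ (Tonelli), and then a standard measurable selection argument --- the paper invokes Lemma A3 of P\"{o}tscher and Prucha (1997) where you invoke the measurable maximum theorem, which is an equivalent device. The one point where the paper is more careful than you: your parenthetical claim that the ``positive piecewise-polynomial $p_{n,j,r_{\ast }}$'' is automatically bounded away from zero on $[0,1]$ is not a valid implication for general piecewise polynomials (on half-open pieces a pointwise positive piecewise polynomial can have infimum zero, e.g.\ $f(x)=1-2x$ on $[0,1/2)$ and $f(x)=1$ on $[1/2,1]$); the correct justification, as the paper states, is that $p_{n,j,r_{\ast }}$ is continuous when $r_{\ast }\geq 2$ and piecewise \emph{constant} when $r_{\ast }=1$, so that in either case the event $A_{n}$ coincides with $\left\{ \inf_{y\in \lbrack 0,1]}p_{n,j,r_{\ast }}(y)>0\right\} $ and the weight $p_{n,j,r_{\ast }}^{-1}$ is indeed bounded on $A_{n}$.
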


\begin{remark}
\label{Neu2} (Computational issues) (i) As noted in Remark \ref{Neu1}, only
one sample of $V_{1},\ldots ,V_{k}$ needs to be drawn before $\hat{\gamma}%
_{lJ}^{(r)}(\theta )$ can be evaluated for any arbitrary $\theta \in \Theta $
via (\ref{aux_est_2b}). The computational costs for evaluating $\hat{\gamma}%
_{lJ}^{(r)}(\theta )$ are trivial.

(ii) The evaluation of the objective function $\mathcal{Q}_{n,k}(\theta )$
at an arbitrary $\theta \in \Theta $ is not computationally expensive
either: Note that in view of (\ref{aux_est_2a}) the objective function $%
\mathcal{Q}_{n,k}(\theta )$ can be written as a linear-quadratic form in the
variables $\hat{\gamma}_{lJ}^{(r)}(\theta )$ where the entries of the
weight-matrix and the coefficients of the linear part are integrals of
functions that do \emph{not }depend on $\theta $ (and are simple functions
of linear combinations of B-spline basis functions). Consequently, the
integrations have to be done only once and the evaluation of $\mathcal{Q}%
_{n,k}(\theta )$ then reduces to computation of the linear-quadratic form in
the variables $\hat{\gamma}_{lJ}^{(r)}(\theta )$.

(iii)\ Minimization of $\mathcal{Q}_{n,k}(\theta )$ over $\Theta $ is now a
standard optimization problem and has a level of computational complexity
comparable to computation of common (non-simulation-based) optimization
estimators. Standard techniques like grid-search, Newton-Raphson-type
procedures, or stochastic search procedures as in Beran and Millar (1987)
can be applied. Similarly as in the case of non-simulation-based
optimization estimators, it is in fact feasible to show that the estimators
generated by such a numerical procedure have the same asymptotic properties
as the estimator $\hat{\theta}_{n,k}$ under appropriate assumptions.
\end{remark}

We now introduce the following assumptions on the parametric model that will
be used to prove the main result.

\bigskip

\textbf{Assumption P1: (i) }The parameter space $\Theta $ is a compact
subset of $\mathbb{R}^{b}$. There exists a $\theta _{0}\in \Theta $ such
that $p_{0}=p(\theta _{0})$ $\lambda $-almost everywhere. Furthermore, $%
p(\theta )=p(\theta _{0})$ $\lambda $-almost everywhere implies $\theta
=\theta _{0}$. The mapping $\theta \mapsto p(\theta ,x)$ is continuous on $%
\Theta $ for every $x\in \lbrack 0,1]$. The density $p(\theta _{0})$ is
positive on $[0,1]$.

\textbf{(ii) }$\mathcal{P}_{\Theta }$ is a bounded subset of $\mathsf{B}%
_{\tau }$ for some $\tau >1/2$.

\textbf{(iii)} $\theta _{0}$ is an interior point of $\Theta $. There is an
open ball $B(\theta _{0})\subseteq \Theta $ with center $\theta _{0}$ such
that the map $\theta \mapsto p(\theta ,x)$ is twice continuously
differentiable on $B(\theta _{0})$ for every $x\in \lbrack 0,1]$.
Furthermore, 
\begin{equation*}
\int_{0}^{1}\sup_{\theta \in B(\theta _{0})}\left\Vert \nabla _{\theta
}p(\theta ,x)\right\Vert ^{2}dx<\infty ,\quad \int_{0}^{1}\sup_{\theta \in
B(\theta _{0})}\left\Vert \nabla _{\theta }^{2}p(\theta ,x)\right\Vert
dx<\infty ,
\end{equation*}%
and $\int_{0}^{1}\nabla _{\theta }p(\theta _{0},x)\nabla _{\theta }p(\theta
_{0},x)^{\prime }p(\theta _{0},x)^{-1}dx$ is positive definite. [Here $%
\nabla _{\theta }$ denotes the gradient w.r.t. $\theta $ written as a column
vector and $\nabla _{\theta }^{2}$ denotes the matrix of second derivatives.]

\textbf{(iv)} For some $\varsigma >1/2$%
\begin{equation*}
\frac{\partial p(\theta _{0},\cdot )}{\partial \theta _{q}}\in \mathsf{B}%
_{\varsigma }
\end{equation*}%
holds for every $q=1,...,b$.

\bigskip

Assumption P1(i) is a standard assumption that implies consistency of the
maximum likelihood estimator. In particular, it expresses the fact that the
parametric model is correctly specified and that the true parameter value is
identifiable. Assumption P1(iii) in conjunction with P1(i) is a typical
assumption used to establish asymptotic normality of the maximum likelihood
estimator and the information matrix equality. Assumption P1(ii) requires
the parametric density functions to behave "regularly" as functions of $x$
(uniformly in $\theta $), the condition being quite weak: Note that if $\tau 
$ is close to $1/2$ the density functions are not even required to be
differentiable, all that is required is essentially that the functions are "$%
\mathcal{L}^{2}$-H\"{o}lder continuous" of order $\tau $, uniformly over $%
\theta $. [Given compactness of $\Theta $, a sufficient condition for
Assumption P1(ii) is that $\mathcal{P}_{\Theta }\subseteq \mathsf{B}_{\tau }$
for some $\tau >1/2$ and that the map $\theta \rightarrow p(\theta )$ from $%
\Theta $ to $\mathsf{B}_{\tau }$ is continuous; in fact, continuity of the
map $\theta \rightarrow \Vert p(\theta )\Vert _{\tau ,2}$ already suffices.
A simple sufficient condition for this (with $\tau =1$) is continuity of $%
\theta \rightarrow \Vert p(\theta )\Vert _{2}$ and $\theta \rightarrow \Vert
D_{w}p(\theta )\Vert _{2}$ on $\Theta $, cf. (\ref{sob}).] In a similar
vein, Assumption P1(iv) imposes an analogous weak regularity condition on
the derivative of $p(\theta )$ (w.r.t. $\theta $) at $\theta =\theta _{0}$.

For parts of the main result we will need to supplement assumption P1 by the
following assumption.

\bigskip

\textbf{Assumption P2: (i) }The set $\left\{ \frac{\partial p(\theta ,\cdot )%
}{\partial \theta _{q}}:q=1,\ldots ,b,\ \theta \in B(\theta _{0})\right\} $
is a relatively compact subset of $\mathcal{L}^{2}$ where $B(\theta _{0})$
is defined in Assumption P1.

\textbf{(ii) }The set $\left\{ \frac{\partial ^{2}p(\theta ,\cdot )}{%
\partial \theta _{q}\partial \theta _{q^{\prime }}}:q,q^{\prime }=1,\ldots
,b,\ \theta \in B(\theta _{0})\right\} $ is a bounded subset of $\mathcal{L}%
^{2}$, i.e.,%
\begin{equation*}
\sup_{\theta \in B(\theta _{0})}\int_{0}^{1}\left\Vert \nabla _{\theta
}^{2}p(\theta ,x)\right\Vert ^{2}dx<\infty .
\end{equation*}

\bigskip

These assumptions are not restrictive. For example, Assumption P2(i) is
satisfied if the indicated set of functions is a bounded subset of a Besov
space $\mathcal{B}_{s}$ with $s$ only satisfying $s>0$, which is a very weak
condition.

We also need assumptions on the simulation mechanism $\rho $. The basic
assumption will be that the function $\rho $ satisfies a H\"{o}lder
continuity condition in $\theta $ (Assumption R(i)). For some of the results
we shall need an additional assumption including twice differentiability in
a neighborhood of $\theta _{0}$ (Assumption R(ii)).

\bigskip

\textbf{Assumption R:} \textbf{(i)} The function $\rho $ is uniformly H\"{o}%
lder in $\theta $, more precisely, for some $0<L<\infty $ and some $0<\alpha
\leq 1$ 
\begin{equation*}
\sup_{v\in \mathcal{V}}\left\vert \rho (v,\theta )-\rho (v,\theta ^{\prime
})\right\vert \leq L\left\Vert \theta -\theta ^{\prime }\right\Vert ^{\alpha
}
\end{equation*}%
holds for all $\theta $, $\theta ^{\prime }\in \Theta $.

\textbf{(ii)} There is an open ball $B(\theta _{0})\subseteq \Theta $ with
center $\theta _{0}$ such that the map $\theta \rightarrow \rho (v,\theta )$
is twice continuously differentiable on $B(\theta _{0})$ for every $v\in 
\mathcal{V}$ and 
\begin{equation*}
\sup_{v\in \mathcal{V},\theta \in B(\theta _{0})}\Vert \nabla _{\theta }\rho
(v,\theta )\Vert <\infty ,\text{ \ \ \ }\sup_{v\in \mathcal{V},\theta \in
B(\theta _{0})}\Vert \nabla _{\theta }^{2}\rho (v,\theta )\Vert <\infty .
\end{equation*}%
Furthermore, for some $0<L^{\prime }<\infty $ and some $0<\beta \leq 1$%
\begin{equation*}
\sup_{v\in \mathcal{V}}\left\Vert \nabla _{\theta }^{2}\rho (v,\theta
)-\nabla _{\theta }^{2}\rho (v,\theta ^{\prime })\right\Vert \leq L^{\prime
}\left\Vert \theta -\theta ^{\prime }\right\Vert ^{\beta }
\end{equation*}%
holds for all $\theta $, $\theta ^{\prime }\in B(\theta _{0})$.

\bigskip

Assumptions on the parametric model $\mathcal{P}_{\Theta }$ and assumptions
on the simulation mechanism $\rho $ are of course interrelated. For example,
one could in principle only impose appropriate assumptions on $\rho $ and
then deduce the existence of a $\mathcal{P}_{\Theta }$ with the required
properties from those assumptions; see Gach (2010) for some discussion.
However, as this does not seem to lead to a transparent catalogue of
assumptions, we have chosen to formulate the assumptions in the form given
above.

We now first establish consistency of the indirect inference estimator. The
assumptions used for the consistency result in the subsequent proposition
are stronger than what is actually needed for such a result, but we do not
strive for utmost generality in the consistency result as this is not the
main focus of the paper. The proof is given in Section \ref{proof_consist}.

\begin{proposition}
\label{consist}Suppose Assumptions P1(i),(ii) and R(i) are satisfied and
that $r_{\ast }\geq 2$ and $r\geq 2$ hold. If $j_{n}\rightarrow \infty $ as $%
n\rightarrow \infty $ and $J_{k}\rightarrow \infty $ as $k\rightarrow \infty 
$ in such a way that for some $\delta >1/2$ we have $\sup_{n\geq
1}2^{j_{n}(2\delta +1)}/n<\infty $ and $\sup_{k\geq 1}J_{k}2^{J_{k}(2\delta
+1)}/k<\infty $, then%
\begin{equation*}
\hat{\theta}_{n,k}\rightarrow \theta _{0}\text{ in }\Pr \text{-probability
as }n\wedge k\rightarrow \infty .
\end{equation*}
\end{proposition}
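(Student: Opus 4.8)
The plan is to prove consistency by the standard route for extremum estimators: I would exhibit a nonrandom limit criterion $Q_{\infty}$ that is uniquely minimized at $\theta _{0}$, show that $\mathcal{Q}_{n,k}$ converges to $Q_{\infty}$ uniformly over the compact set $\Theta $ in $\Pr $-probability, and then invoke the usual argmin-consistency theorem. The natural candidate for the limit is the population Fisher-metric criterion $Q_{\infty}(\theta ):=\chi _{F}(p_{0},p(\theta ))^{2}=\int_{0}^{1}(p_{0}-p(\theta ))^{2}p_{0}^{-1}d\lambda $, i.e.\ the unsimulated population version of (\ref{objective}).

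Step 1 (the data-based estimator). First I would show $\Vert p_{n,j_{n},r_{\ast }}-p_{0}\Vert _{\infty }\rightarrow 0$ in $\Pr $-probability. Writing the bias--variance decomposition relative to the $\mathcal{L}^{2}$-projection of $p_{0}$ onto the Schoenberg space $\mathcal{S}_{j_{n}}(r_{\ast })$, the deterministic approximation (bias) term vanishes because $p_{0}\in \mathsf{B}_{\tau }$ with $\tau >1/2$ (Assumption P1(ii)) and $j_{n}\rightarrow \infty $, using the spline approximation estimates of Appendix \ref{App_Spline} (valid for $r_{\ast }\geq 2$); the stochastic (variance) term is controlled in sup-norm by the growth condition $\sup_{n}2^{j_{n}(2\delta +1)}/n<\infty $ with $\delta >1/2$. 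Since $\tau >1/2$ makes $p_{0}$ continuous and P1(i) makes it positive on the compact interval $[0,1]$, we have $\inf_{[0,1]}p_{0}>0$; hence sup-norm convergence yields both $\Pr (A_{n})\rightarrow 1$ and $\Vert p_{n,j_{n},r_{\ast }}^{-1}-p_{0}^{-1}\Vert _{\infty }\rightarrow 0$ on $A_{n}$.

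Step 2 (the simulation-based estimator, uniformly in $\theta $). Next I would establish $\sup_{\theta \in \Theta }\Vert p_{k,J_{k},r}(\theta )-p(\theta )\Vert _{2}\rightarrow 0$ in $\Pr $-probability. The bias is handled uniformly in $\theta $ because $\mathcal{P}_{\Theta }$ is a \emph{bounded} subset of $\mathsf{B}_{\tau }$ (P1(ii)), so the spline approximation error is bounded uniformly over $\theta $ and vanishes as $J_{k}\rightarrow \infty $. The variance, however, must be controlled \emph{uniformly over} $\theta $, and this is where Assumption R(i) enters: since $X_{i}(\theta )=\rho (V_{i},\theta )$ is uniformly H\"{o}lder in $\theta $, the coefficients $\hat{\gamma}_{lJ}^{(r)}(\theta )$ in (\ref{aux_est_2b}) are equicontinuous in $\theta $, and the empirical fluctuations $\int N_{mJ}^{(r)}d(P_{k}(\theta )-P(\theta ))$ can be bounded uniformly over $\theta \in \Theta $ and over the $O(2^{J_{k}})$ basis indices by a covering/chaining argument over $\Theta $ combined with a union bound over the coefficients. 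I expect this uniform-in-$\theta $ law of large numbers to be the main obstacle: a single simulated sample $V_{1},\ldots ,V_{k}$ must control $p_{k,J_{k},r}(\theta )$ simultaneously for all $\theta $, so a pointwise result does not suffice, and the extra factor $J_{k}$ in the condition $\sup_{k}J_{k}2^{J_{k}(2\delta +1)}/k<\infty $ is precisely the price paid for this uniformity.

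Step 3 (assembling the criterion, identification, conclusion). On $A_{n}$ I would decompose $p_{n,j_{n},r_{\ast }}-p_{k,J_{k},r}(\theta )=(p_{n,j_{n},r_{\ast }}-p_{0})+(p_{0}-p(\theta ))+(p(\theta )-p_{k,J_{k},r}(\theta ))$, the first summand tending to $0$ in sup-norm (Step 1), the third to $0$ in $\mathcal{L}^{2}$ uniformly in $\theta $ (Step 2), and the middle term being bounded uniformly in $\theta $ since $\mathcal{P}_{\Theta }$ is bounded in $\mathsf{B}_{\tau }\subset \mathsf{C}([0,1])$. Squaring and multiplying by $p_{n,j_{n},r_{\ast }}^{-1}$, which is uniformly close to the bounded $p_{0}^{-1}$ by Step 1, then gives $\sup_{\theta \in \Theta }|\mathcal{Q}_{n,k}(\theta )-Q_{\infty}(\theta )|\rightarrow 0$ in probability as $n\wedge k\rightarrow \infty $. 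The limit $Q_{\infty}$ is continuous in $\theta $ (continuity of $\theta \mapsto p(\theta ,x)$ from P1(i), dominated via the $\mathsf{B}_{\tau }$-bound) and satisfies $Q_{\infty}(\theta )\geq 0$ with equality iff $p(\theta )=p(\theta _{0})$ $\lambda $-a.e., i.e.\ iff $\theta =\theta _{0}$, by the identifiability in P1(i). Compactness of $\Theta $ makes $\theta _{0}$ a well-separated minimizer, and the standard extremum-estimator consistency theorem applied to the approximate minimizer $\hat{\theta}_{n,k}$ from (\ref{iie}) delivers $\hat{\theta}_{n,k}\rightarrow \theta _{0}$ in $\Pr $-probability.
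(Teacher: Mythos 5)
Your proposal is correct and follows essentially the same route as the paper: it reduces consistency to uniform convergence of $\mathcal{Q}_{n,k}$ over the compact $\Theta$ to the population criterion $Q(\theta)=\int_{0}^{1}(p(\theta_{0})-p(\theta))^{2}p(\theta_{0})^{-1}d\lambda$, using sup-norm consistency of $p_{n,j_{n},r_{\ast}}$ (the paper's Corollary \ref{consistency_2}, which also yields $\Pr(A_{n}^{\ast})\rightarrow 1$) together with a uniform-in-$\theta$ law of large numbers for $p_{k,J_{k},r}(\theta)$ driven by the H\"{o}lder condition R(i) and a covering/moment-inequality argument (the paper's Lemma \ref{uniformrate} and Corollary \ref{consistency}), and then concludes by identifiability, continuity, and the standard argmin-consistency theorem. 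Your Step 2/Step 3 variant (uniform $\mathcal{L}^{2}$-control of the simulated estimators combined with sup-norm control of the data-based one, and a slightly different algebraic decomposition) suffices by Cauchy--Schwarz and is only a cosmetic deviation from the paper's bound.
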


We note that the condition on $j_{n}$ is, e.g., satisfied if $2^{j_{n}}\sim
n^{\psi }$ with $0<\psi <1/2$. A similar comment applies to $J_{k}$. In
particular, the `textbook'-choice $\psi =1/(2\tau +1)$ with $\tau $ from
Assumption P1(ii) is covered.

For the main result we need to distinguish several cases characterized by
the behavior of the number $k(n)\in \mathbb{N}$ of simulated data as a
function of sample size $n$:

\bigskip

\textbf{Assumption S1:} $\lim_{n\rightarrow \infty }k(n)/n^{2}=\infty $.

\textbf{Assumption S2:} $\lim_{n\rightarrow \infty }k(n)/n=\infty $.

\textbf{Assumption S3:} $\lim_{n\rightarrow \infty }k(n)/n=\kappa $ for some 
$0<\kappa <\infty $.

\bigskip

The theorem given below is the main result and shows that, under appropriate
conditions on the resolution levels $j_{n}$ and $J_{k}$, the indirect
inference estimator $\hat{\theta}_{n,k}$\ is asymptotically normal and has
the same limiting distribution as the maximum likelihood estimator provided
the number $k(n)$ of simulated data grows sufficiently fast as a function of
sample size $n$. This is established under the quite weak assumption R(i) if 
$k(n)$ grows faster than $n^{2}$. If $k(n)$ is only required to grow faster
than $n$, the same result is obtained under somewhat stronger assumptions
(Assumption R, $\tau >3/2$, $r\geq 4$). Under the latter assumptions, the
theorem also shows that in case $k(n)$ behaves asymptotically like $n$, the
indirect inference estimator is still asymptotically normal but its
asymptotic variance covariance matrix is then inflated by a factor $%
1+1/\kappa $, where $\kappa =\lim_{n\rightarrow \infty }k(n)/n$. We also
note that the condition $\tau <r_{\ast }\wedge r$ in the subsequent theorem
is virtually no restriction as discussed in Remark \ref{rem_main} below. The
proof of the subsequent theorem is deferred to Section \ref{proof_main}.

\begin{theorem}
\label{main}Suppose $r\geq 2$ and $r_{\ast }\geq 2$ hold and Assumption P1
is satisfied for some $1/2<\tau <r_{\ast }\wedge r$. Suppose that $%
2^{j_{n}}\sim n^{1/(2\tau +1)}$ and $2^{J_{k(n)}}\sim k(n)^{1/(2\tau +1)}$.

a. Suppose one of the following two conditions holds:

\qquad 1. Assumptions R(i) and S1 hold.

\qquad 2. Assumptions P2, R, and S2 hold, and that $\tau >3/2$, $r\geq 4$
are satisfied.

Then 
\begin{equation*}
\sqrt{n}\left( \hat{\theta}_{n,k(n)}-\theta _{0}\right) \rightarrow
^{d}N(0,I(\theta _{0}))
\end{equation*}%
as $n\rightarrow \infty $ where $I(\theta _{0})=\left( \int_{0}^{1}\nabla
_{\theta }p(\theta _{0},x)\nabla _{\theta }p(\theta _{0},x)^{\prime
}p(\theta _{0},x)^{-1}dx\right) ^{-1}$ is the Cram\'{e}r-Rao bound.

b. Suppose Assumptions P2, R, and S3 hold for some $0<\kappa <\infty $, and
that $\tau >3/2$, $r\geq 4$ are satisfied. Then 
\begin{equation*}
\sqrt{n}\left( \hat{\theta}_{n,k(n)}-\theta _{0}\right) \rightarrow
^{d}N\left( 0,(1+1/\kappa )I(\theta _{0})\right)
\end{equation*}%
as $n\rightarrow \infty $.
\end{theorem}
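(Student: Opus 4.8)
The plan is to reduce Theorem \ref{main} to two largely independent problems: first, the asymptotic efficiency of an infeasible, non-simulated minimum distance estimator $\hat\theta_n$ that minimizes
\begin{equation*}
Q_n(\theta)=\int_0^1\bigl(p_{n,j,r_\ast}-p(\theta)\bigr)^2 p_{n,j,r_\ast}^{-1}\,d\lambda
\end{equation*}
on $A_n$, i.e. the objective function (\ref{objective}) with the simulation-based estimator $p_{k,J,r}(\theta)$ replaced by the exact parametric density $p(\theta)$; and second, the control of the error incurred by using the simulation-based estimator in place of $p(\theta)$. The first problem is the Beran-type result, which I would isolate as Theorem \ref{0815} in Section \ref{intermed}. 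Its proof follows the classical minimum distance / $M$-estimation scheme: Taylor-expand the first order condition $\nabla_\theta Q_n(\hat\theta_n)=0$ about $\theta_0$, so that $\sqrt n(\hat\theta_n-\theta_0)$ equals minus the inverse Hessian times $\sqrt n\,\nabla_\theta Q_n(\theta_0)$ up to a negligible remainder. The Hessian converges to $2 I(\theta_0)^{-1}$ by Assumption P1(iii), while the structure of the Fisher metric makes the gradient coincide, up to lower order terms, with the efficient score $-2\sqrt n\int s(\theta_0)\,d(\tilde P_n-P_0)$ as in (\ref{score}), where $s(\theta_0)=\nabla_\theta p(\theta_0)p_0^{-1}$. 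Asymptotic normality of this score with variance equal to the information matrix is exactly what the uniform central limit theorem for spline projection estimators (Section 7) delivers, since $s(\theta_0)$ lies in the relevant function class by Assumptions P1(iv) and the smoothness conditions. Efficiency is then immediate because the limiting variance of the score is the Fisher information itself.

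For part (a), case 1 (Assumptions R(i) and S1, $k\gg n^2$), I would then carry the limit from $\hat\theta_n$ to $\hat\theta_{n,k}$ by the \emph{brute force} comparison. The key quantity is $\sup_{\theta\in\Theta}|\mathcal{Q}_{n,k}(\theta)-Q_n(\theta)|$, whose dominant contribution reduces to the process $\{\sqrt n\int f\,d(P_k(\theta)-P(\theta))\}_{\theta,f}$ integrated against the spline kernel and $p_{n,j}^{-1}$. Bounding this uniformly over $\theta$ via the moment inequalities of Gin\'e and Koltchinskii (2006), together with the $\mathcal{L}^2$- and $\mathcal{L}^\infty$-convergence rates for the simulated spline estimator from Section 6, shows the supremum is $o_P(n^{-1/2})$; tracking the rates (the simulated estimator converges roughly at $k^{-\tau/(2\tau+1)}$, entering the squared Fisher distance quadratically) is precisely what forces $k\gg n^2$. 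A result of Gach (2010) then transfers the limiting distribution, so $\hat\theta_{n,k}$ inherits the $N(0,I(\theta_0))$ limit of $\hat\theta_n$. The appeal of this route is that it uses \emph{no} differentiability of $\mathcal{Q}_{n,k}$ in $\theta$, which is why only R(i) is needed.

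For part (a), case 2 and part (b) (Assumptions P2, R, and S2 or S3), the comparison above is no longer $o_P(n^{-1/2})$, so I would instead apply the $M$-estimation argument directly to $\mathcal{Q}_{n,k}$. Using the differentiability of $\rho$ in Assumption R(ii) and the chain rule through the coefficients $\hat\gamma^{(r)}_{lJ}(\theta)$, I would establish that $\theta\mapsto\mathcal{Q}_{n,k}(\theta)$ is twice continuously differentiable near $\theta_0$, write the first order condition, and Taylor-expand. The decisive new feature is that the score $\sqrt n\,\nabla_\theta\mathcal{Q}_{n,k}(\theta_0)$ splits into the data-driven term, identical to the one above and yielding variance $I(\theta_0)^{-1}$, plus a simulation-driven term built from $\nabla_\theta p_{k,J,r}(\theta_0)$. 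Because the two samples are independent and, by a CLT for the differentiated simulated spline estimator, the simulation term contributes an independent Gaussian of variance of order $(n/k)I(\theta_0)^{-1}$, the variances add: this gives $(1+1/\kappa)I(\theta_0)$ under S3 and, since $n/k\to0$, the unchanged $I(\theta_0)$ under S2.

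The main obstacle is controlling the differentiated simulation estimator $\nabla_\theta p_{k,J,r}(\theta)$ and the resulting Hessian and remainder terms uniformly near $\theta_0$. Differentiation acts through $\rho(V_i,\theta)$ inside the B-spline basis evaluations, and since the spline projection is an approximate identity at resolution $2^{-J}\sim k^{-1/(2\tau+1)}$, each $\theta$-derivative costs a factor $2^J$ (the ``$h^{-1}$ penalty''). Showing that the simulation part of the score remains $\sqrt n$-tight and that the second-derivative remainder is negligible despite this penalty is where the bulk of the technical work lies; it is exactly here that the stronger hypotheses $\tau>3/2$, $r\ge4$, P2, and R(ii) are consumed, offsetting the penalty through the sharp empirical-process maximal inequalities and approximation-theoretic bounds developed in Sections 6 and 7. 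The detailed execution is deferred to Section \ref{proof_main}.
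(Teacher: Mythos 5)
Your overall architecture coincides with the paper's: an intermediate efficiency result for the non-simulated minimizer $\hat{\theta}_{n}$ of $Q_{n}$ (Theorem \ref{0815}, proved by Taylor expansion plus the uniform CLT of Theorem \ref{UCLT_2}), a uniform objective-function comparison plus the Gach lemma for Part a1, and a direct $M$-estimation argument on $\mathcal{Q}_{n,k}$ with an independent-variances score decomposition for Parts a2 and b. However, there is a genuine gap in your rate accounting for Part a1. The transfer device (Lemma \ref{gachwurzel}) converts closeness of objective functions into closeness of minimizers only up to a square root: $\Vert \hat{\theta}_{n,k}-\hat{\theta}_{n}\Vert \leq 2c^{-1/2}\bigl(\sup_{\theta \in \Theta }|\mathcal{Q}_{n,k}(\theta )-Q_{n}(\theta )|\bigr)^{1/2}$. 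Hence your stated target, $\sup_{\theta }|\mathcal{Q}_{n,k}-Q_{n}|=o_{P}(n^{-1/2})$, yields only $\Vert \hat{\theta}_{n,k}-\hat{\theta}_{n}\Vert =o_{P}(n^{-1/4})$, which does not imply that $\sqrt{n}(\hat{\theta}_{n,k}-\theta _{0})$ and $\sqrt{n}(\hat{\theta}_{n}-\theta _{0})$ share the same limit. What is actually needed is that the objective difference be $o_{P}(n^{-1})$. The paper's Proposition \ref{wurzel0} supplies exactly this: the difference is $O_{p}(k^{-1/2})$ uniformly over $\Theta $, which under S1 ($k/n^{2}\rightarrow \infty $) is $o_{p}(n^{-1})$, whence $\Vert \hat{\theta}_{n,k}-\hat{\theta}_{n}\Vert =O_{p}(k^{-1/4})=o_{p}(n^{-1/2})$. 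As written, your chain of implications for Part a1 fails at this step.

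Two secondary points. First, your heuristic for why S1 is forced is not the right one: the binding term in $\mathcal{Q}_{n,k}-Q_{n}$ is not the squared $L^{2}$-error of the simulated estimator (that term is of order $k^{-2\tau /(2\tau +1)}$, which is $o(k^{-1/2})$ for $\tau >1/2$ and would suggest only $k\gg n^{(2\tau +1)/(2\tau )}$); it is the term \emph{linear} in $p_{k,J_{k},r}(\theta )-p(\theta )$, tested against the Besov-bounded function $(p_{k,J_{k},r}(\theta )+p(\theta ))p_{n,j_{n},r_{\ast }}^{-1}-2$, an empirical-process quantity of exact order $k^{-1/2}$ (Theorem \ref{UCLT}); it is this $k^{-1/2}$, square-rooted by the Gach lemma, that produces the $n^{2}$ threshold. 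Second, for Parts a2/b the Gaussian simulation contribution to the score does not arise from a CLT for the differentiated estimator $\nabla _{\theta }p_{k,J,r}(\theta _{0})$: in the paper's decomposition the leading terms test $p_{n,j_{n},r_{\ast }}-p(\theta _{0})$ and $p_{k,J_{k},r}(\theta _{0})-p(\theta _{0})$ against the \emph{fixed} efficient influence function $p(\theta _{0})^{-1}\nabla _{\theta }p(\theta _{0})$, while $\nabla _{\theta }p_{k,J,r}$ enters only cross/remainder terms and the Hessian, where rate bounds (Lemmata \ref{havoc} and \ref{uniformrate}) rather than a CLT suffice; that is indeed where $\tau >3/2$, $r\geq 4$, P2 and R(ii) are spent, as you say. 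You also omit the subsequence argument reducing to $\lim k(n)/n^{2}<\infty $, which the paper needs in Step 2 to obtain the localization rate $\Vert \hat{\theta}_{n,k(n)}-\theta _{0}\Vert =O_{p}(k(n)^{-1/4})$ for the Hessian comparison.
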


We note that the rates of increase for $2^{j_{n}}$ and $2^{J_{k(n)}}$
specified in the above theorem are precisely the rate-optimal choices based
on mean integrated squared error. As already alluded to prior to the
theorem, in Part a of the theorem there is a trade-off between the
stringency of assumptions on the model and the simulation mechanism on the
one hand and the assumptions on the rate of increase of $k(n)$ (Assumptions
S1 versus S2) on the other hand. While the particular form of the trade-off
is a consequence of two different methods of proof employed for Part a1 and
Part a2 (and thus may in principle be an artefact), it seems plausible that
some sort of trade-off is intrinsic to the problem.

\begin{remark}
\label{rem_main}(i) The condition $\tau <r_{\ast }\wedge r$ in the above
theorem is not really a restriction on $\mathcal{P}_{\Theta }$ and can
always be achieved in the following sense: If Assumption P1 holds with $\tau
\geq r_{\ast }\wedge r$, it holds with $\tau $ replaced by any $\tau
^{\prime }$ satisfying $1/2<\tau ^{\prime }<r_{\ast }\wedge r$ as well,
since $\mathsf{B}_{\tau }$ is continuously imbedded in $\mathsf{B}_{\tau
^{\prime }}$ for $\tau ^{\prime }\leq \tau $. Consequently, the above
theorem can be applied with $\tau ^{\prime }$ replacing $\tau $ (requiring
also $\tau ^{\prime }>3/2$ for Parts a2 and b). [The restriction $\tau
<r_{\ast }\wedge r$ in the theorem simply expresses the fact that the rate
of increase of $j_{n}$ and $J_{k}$ is not only governed by the degree of
"regularity" $\tau $ of the densities in $\mathcal{P}_{\Theta }$, but also
by the degrees of "regularity" of the splines used to estimate $p_{0}$ and $%
p(\theta )$, respectively, i.e., by $r_{\ast }$ and $r$.]

(ii) The argument underlying (i) also shows that $2^{j_{n}}\sim n^{1/(2\tau
^{\prime }+1)}$ and $2^{J_{k(n)}}\sim k(n)^{1/(2\tau ^{\prime }+1)}$ are
feasible in Theorem \ref{main} as it stands as long as $1/2<\tau ^{\prime
}\leq \tau $ (and $\tau ^{\prime }>3/2$ for Parts a2 and b) are satisfied. A
careful examination of the proof shows that the range for $2^{j_{n}}$ and $%
2^{J_{k(n)}}$, under which the conclusion of the theorem holds, is actually
somewhat wider. However, we abstain from providing such results as they
quickly get unwieldy.

(iii) If in Part a2 of Theorem \ref{main} the Assumption S2 is strengthened
by assuming a particular growth-rate for $k(n)$ such as, e.g., $%
k(n)=n^{\delta }$, $1<\delta \leq 2$, this can be used to relax the
assumption $\tau >3/2$. We refrain from presenting such results.

(iv) If $k(n)$ is such that $0<\liminf k(n)/n<\infty $, but $\limsup
k(n)/n=\infty $, then the distribution $\sqrt{n}\left( \hat{\theta}%
_{n,k(n)}-\theta _{0}\right) $ does not possess a limit, but `oscillates'
between accumulation points of the form $N\left( 0,I(\theta _{0})\right) $
and $N\left( 0,(1+1/\kappa )I(\theta _{0})\right) $ where now $\kappa
=\liminf_{n\rightarrow \infty }k(n)/n$.

(v) A result similar to Part a1 of Theorem \ref{main} can be proved in case $%
r^{\ast }=1$. Since this requires a separate proof, we do not give such a
result for the sake of brevity.
\end{remark}

Under Assumption P1 the expression $\Psi (\theta )=\int_{0}^{1}\nabla
_{\theta }p(\theta )\nabla _{\theta }p(\theta )^{\prime }p(\theta
)^{-1}d\lambda $ depends continuously on $\theta $ by dominated convergence.
Hence, $\Psi (\bar{\theta})^{-1}$ is a consistent estimator for $I(\theta
_{0})$ for every consistent estimator $\bar{\theta}$. However, this
observation is not very helpful in the context of indirect inference as then
expressions for the density $p(\theta )$ are typically not available. An
alternative consistent estimator that is feasible to compute is described in
the next proposition which is proved in Section \ref{proof_vc}. In the
following proposition let $\bar{\theta}_{n,k}$ stand for an arbitrary
consistent estimator that depends on the original data and perhaps also on
the simulated data. Of course, under the assumptions of Proposition \ref%
{consist} we may take $\bar{\theta}_{n,k}=\hat{\theta}_{n,k}$.

\begin{proposition}
\label{vc}Suppose Assumptions P1(i)-(iii), P2(i), and R(ii) hold. Suppose
further that $\bar{\theta}_{n,k}\rightarrow \theta _{0}$ in probability as $%
n\wedge k\rightarrow \infty $. Assume $r_{\ast }^{\prime }\geq 2$ and $%
r^{\prime }\geq 3$. If $j_{n}^{\prime }\rightarrow \infty $ as $n\rightarrow
\infty $ and $J_{k}^{\prime }\rightarrow \infty $ as $k\rightarrow \infty $
in such a way that for some $\delta >1/2$ we have $\sup_{n\geq
1}2^{j_{n}^{\prime }(2\delta +1)}/n<\infty $ and also $J_{k}^{\prime
}2^{3J_{k}^{\prime }}/k\rightarrow 0$, then 
\begin{equation*}
\left( \int_{0}^{1}\nabla _{\theta }p_{k,J_{k}^{\prime },r^{\prime }}(\bar{%
\theta}_{n,k})\nabla _{\theta }p_{k,J_{k}^{\prime },r^{\prime }}(\bar{\theta}%
_{n,k})^{\prime }p_{n,j_{n},r_{\ast }^{\prime }}^{-1}d\lambda \right) ^{-1}
\end{equation*}%
is well-defined on an event that has probability converging to $1$, and is a
consistent estimator for $I(\theta _{0})$ as $n\wedge k\rightarrow \infty $.
\end{proposition}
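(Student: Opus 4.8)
The plan is to prove that the matrix
\[
\hat{\Psi}_{n,k}:=\int_{0}^{1}\nabla_{\theta}p_{k,J_{k}^{\prime},r^{\prime}}(\bar{\theta}_{n,k})\nabla_{\theta}p_{k,J_{k}^{\prime},r^{\prime}}(\bar{\theta}_{n,k})^{\prime}p_{n,j_{n},r_{\ast}^{\prime}}^{-1}d\lambda
\]
converges in probability to $\Psi(\theta_{0})$ and then to invoke the continuous mapping theorem. Since $\Psi(\theta_{0})$ is positive definite by Assumption P1(iii) and matrix inversion is continuous on the open set of positive definite matrices, the convergence $\hat{\Psi}_{n,k}\to\Psi(\theta_{0})$ in probability simultaneously establishes that $\hat{\Psi}_{n,k}$ is invertible on an event of probability tending to one and that $\hat{\Psi}_{n,k}^{-1}\to\Psi(\theta_{0})^{-1}=I(\theta_{0})$. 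Everything thus reduces to the in-probability convergence of $\hat{\Psi}_{n,k}$, which I would handle by controlling separately the weight $p_{n,j_{n},r_{\ast}^{\prime}}^{-1}$ and the gradient $\nabla_{\theta}p_{k,J_{k}^{\prime},r^{\prime}}(\bar{\theta}_{n,k})$.

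For the weight, I would first show sup-norm consistency $\Vert p_{n,j_{n},r_{\ast}^{\prime}}-p_{0}\Vert_{\infty}\to 0$ in probability, using the convergence-rate results for spline projection estimators developed for the proof of Theorem \ref{main}: Assumption P1(ii) supplies $p_{0}\in\mathsf{B}_{\tau}$ with $\tau>1/2$, controlling the approximation (bias) error in sup-norm, while the hypothesis $\sup_{n}2^{j_{n}^{\prime}(2\delta+1)}/n<\infty$ with $\delta>1/2$ controls the stochastic term. Because $p_{0}=p(\theta_{0})$ is continuous and strictly positive on the compact interval $[0,1]$ by P1(i), it is bounded below by some $c>0$; sup-norm consistency then forces $\Pr(p_{n,j_{n},r_{\ast}^{\prime}}\geq c/2\text{ on }[0,1])\to 1$, which gives the asserted well-definedness and yields $\Vert p_{n,j_{n},r_{\ast}^{\prime}}^{-1}-p_{0}^{-1}\Vert_{\infty}\to 0$ in probability with $p_{n,j_{n},r_{\ast}^{\prime}}^{-1}$ uniformly bounded on the relevant event.

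For the gradient I would prove $\Vert\nabla_{\theta}p_{k,J_{k}^{\prime},r^{\prime}}(\bar{\theta}_{n,k})-\nabla_{\theta}p(\theta_{0})\Vert_{2}\to 0$ in probability by writing, with $\pi_{J,r^{\prime}}$ the $\mathcal{L}^{2}$-orthogonal projection onto the Schoenberg space $\mathcal{S}_{J}(r^{\prime})$, the decomposition
\[
\nabla_{\theta}p_{k,J_{k}^{\prime},r^{\prime}}(\bar{\theta}_{n,k})-\nabla_{\theta}p(\theta_{0})=\left[\nabla_{\theta}p_{k,J_{k}^{\prime},r^{\prime}}(\bar{\theta}_{n,k})-\pi_{J_{k}^{\prime},r^{\prime}}(\nabla_{\theta}p(\bar{\theta}_{n,k}))\right]+\left[\pi_{J_{k}^{\prime},r^{\prime}}(\nabla_{\theta}p(\bar{\theta}_{n,k}))-\nabla_{\theta}p(\bar{\theta}_{n,k})\right]+\left[\nabla_{\theta}p(\bar{\theta}_{n,k})-\nabla_{\theta}p(\theta_{0})\right].
\]
Here I use that, since the basis $N_{lJ}^{(r^{\prime})}$ does not depend on $\theta$ and (by $r^{\prime}\geq 3$, so that the splines are $C^{1}$, and Assumption R(ii)) differentiation in $\theta$ may be carried through the chain rule $\partial_{\theta}N_{mJ}^{(r^{\prime})}(\rho(V_{i},\theta))=(N_{mJ}^{(r^{\prime})})^{\prime}(\rho(V_{i},\theta))\nabla_{\theta}\rho(V_{i},\theta)$, the population mean of $\nabla_{\theta}p_{k,J,r^{\prime}}(\theta)$ is exactly $\pi_{J,r^{\prime}}(\nabla_{\theta}p(\theta))$. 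The third bracket tends to zero in probability because $\bar{\theta}_{n,k}\to\theta_{0}$ and $\theta\mapsto\nabla_{\theta}p(\theta)$ is continuous into $\mathcal{L}^{2}$ (by P1(iii) and dominated convergence); the middle bracket tends to zero uniformly over $\theta\in B(\theta_{0})$ because, by Assumption P2(i), $\{\nabla_{\theta}p(\theta):\theta\in B(\theta_{0})\}$ is relatively compact in $\mathcal{L}^{2}$ and projection errors vanish uniformly on relatively compact sets as $J_{k}^{\prime}\to\infty$, and $\bar{\theta}_{n,k}\in B(\theta_{0})$ with probability tending to one.

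The first bracket is the genuine stochastic term and I expect it to be the main obstacle. It is a centered empirical process over the simulated sample $V_{1},\ldots,V_{k}$, indexed by $\theta\in B(\theta_{0})$, whose $\mathcal{L}^{2}$-size I would bound via the moment inequalities and chaining techniques the paper uses elsewhere. A direct variance computation (using that $(N_{mJ}^{(r^{\prime})})^{\prime}$ is of order $2^{J}$ on a support of length of order $2^{-J}$, that the Gram weights $g_{J}^{(r^{\prime})lm}$ are $O(1)$ and banded, and that $\nabla_{\theta}\rho$ is bounded by R(ii)) gives $E\Vert\nabla_{\theta}p_{k,J,r^{\prime}}(\theta)-\pi_{J,r^{\prime}}(\nabla_{\theta}p(\theta))\Vert_{2}^{2}=O(2^{3J}/k)$ pointwise in $\theta$; the differentiation in $\theta$ is responsible for the extra factor $2^{2J}$ relative to the ordinary density case, which is exactly the penalty noted in the introduction. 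Promoting this to a bound uniform over $\theta\in B(\theta_{0})$ — so that the random argument $\bar{\theta}_{n,k}$ may be inserted — costs a further factor $J_{k}^{\prime}$, the boundedness of $\nabla_{\theta}^{2}\rho$ in R(ii) supplying the equicontinuity in $\theta$ needed for the chaining; the resulting uniform rate $O_{\Pr}(J_{k}^{\prime}2^{3J_{k}^{\prime}}/k)$ vanishes precisely under the stated hypothesis $J_{k}^{\prime}2^{3J_{k}^{\prime}}/k\to 0$. Finally, combining the two steps through the bilinear expansion $\hat{g}\hat{g}^{\prime}\hat{w}-gg^{\prime}w=(\hat{g}-g)\hat{g}^{\prime}\hat{w}+g(\hat{g}-g)^{\prime}\hat{w}+gg^{\prime}(\hat{w}-w)$, with $\hat{g}=\nabla_{\theta}p_{k,J_{k}^{\prime},r^{\prime}}(\bar{\theta}_{n,k})$, $g=\nabla_{\theta}p(\theta_{0})$, $\hat{w}=p_{n,j_{n},r_{\ast}^{\prime}}^{-1}$, $w=p_{0}^{-1}$, integrated and bounded by Cauchy--Schwarz using $\Vert g\Vert_{2}<\infty$ (from P1(iii)) and the uniform bound on $\hat{w}$, yields $\hat{\Psi}_{n,k}\to\Psi(\theta_{0})$ in probability and completes the argument.
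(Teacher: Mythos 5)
Your proposal is correct and takes essentially the same route as the paper's own proof: the paper likewise reduces everything to in-probability convergence of the matrix (proved uniformly over $B(\theta _{0})$ and combined with consistency of $\bar{\theta}_{n,k}$), controls the weight through sup-norm consistency of $p_{n,j_{n}^{\prime },r_{\ast }^{\prime }}$ (Corollary \ref{consistency_2}), and splits the gradient error into exactly your two pieces --- the centered empirical-process term, bounded uniformly over $B(\theta _{0})$ at the rate $2^{3J_{k}^{\prime }}J_{k}^{\prime }/k$ (Lemma \ref{uniformrate}b), and the projection bias term via the identity $E\nabla _{\theta }p_{k,J,r^{\prime }}(\theta )=\pi _{J}^{(r^{\prime })}(\nabla _{\theta }p(\theta ))$ together with Assumption P2(i) and uniform projection-error decay on relatively compact subsets of $\mathcal{L}^{2}$ (Proposition \ref{proj_error}). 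The only difference is organizational: the paper absorbs your third ``continuity'' bracket by proving uniform convergence of $\hat{\Phi}(\theta )$ to $\Phi (\theta )$ on $B(\theta _{0})$ and invoking continuity of $\Phi $ at $\theta _{0}$, which is equivalent to your argument.
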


Observe that the condition on $j_{n}^{\prime }$ is satisfied if $%
2^{j_{n}^{\prime }}\sim n^{\psi }$ with $0<\psi <1/2$; similarly, the
condition on $J_{k}^{\prime }$ is satisfied if $2^{J_{k}^{\prime }}\sim
n^{\psi }$ with $0<\psi <1/3$. The reason for allowing $r^{\prime }$ to
differ from $r$ in Theorem \ref{main}, is to be able to construct a
consistent estimator for $I(\theta _{0})$ also in cases where $r=2$.
Allowing $J_{k}^{\prime }$ to be different from $J_{k}$ has the advantage of
avoiding a constraint on $\tau $.

\section{Dyadic Splines\label{splines}}

Let $T_{j}=\{t_{l}:=l2^{-j}:l=1,\ldots ,2^{j}-1\}$ be a dyadic set of knots
in $[0,1]$, where $j\in \mathbb{N}$, the set of nonnegative integers. A
function $S:[0,1]\rightarrow \mathbb{R}$ is a (dyadic) spline of order $%
r\geq 2$ if on each of the intervals $[0,t_{1})$, $(t_{l},t_{l+1})$ for $%
l=1,\ldots ,2^{j}-2$, and $(t_{2^{j}-1},1]$, it is a polynomial of degree
not larger than $r-1$, and on at least one of the intervals it is a
polynomial of degree exactly $r-1$. The Schoenberg spaces $\mathcal{S}%
_{j}(r) $ considered here consist of all splines of order less than or equal
to $r$ that are $r-2$ times continuously differentiable on $[0,1]$ (using
one-sided derivatives on the boundary of $[0,1]$). For $r=1$ we define the
Schoenberg space $\mathcal{S}_{j}(1)$ to be the space of all functions $%
S:[0,1]\rightarrow \mathbb{R}$ that are constant on the intervals $[0,t_{1})$%
, $[t_{l},t_{l+1})$ for $l=1,\ldots ,2^{j}-2$, and $[t_{2^{j}-1},1]$. The
Schoenberg spaces are linear spaces of dimension $2^{j}+r-1$. For $r\geq 2$
the B-spline basis for $\mathcal{S}_{j}(r)$ is given by $%
\{N_{lj}^{(r)}:l=-r+1,\ldots ,0,1,\ldots ,2^{j}-1\}$ with 
\begin{equation*}
N_{lj}^{(r)}(x)=N^{(r)}(2^{j}x-l)\qquad \text{for }x\in \lbrack 0,1],
\end{equation*}%
where $N^{(r)}$ is the B-spline-function (of order $r$) given by the $r$%
-fold convolution%
\begin{equation*}
N^{(r)}(u)=\boldsymbol{1}_{[0,1)}\ast ...\ast \boldsymbol{1}%
_{[0,1)}(u)\qquad \text{for }u\in \mathbb{R};
\end{equation*}%
cf., e.g., Chapter 5 in DeVore and Lorentz (1993). In case $r=1$ we set 
\begin{equation*}
N_{lj}^{(1)}(x)=N^{(1)}(2^{j}x-l)\qquad \text{for }x\in \lbrack 0,1],
\end{equation*}%
for $l=0,1,\ldots ,2^{j}-2$, where $N^{(1)}(u)=\boldsymbol{1}_{[0,1)}(u)$,
but we set 
\begin{equation*}
N_{lj}^{(1)}(x)=\boldsymbol{1}_{[0,1]}(2^{j}x-l)\qquad \text{for }x\in
\lbrack 0,1]
\end{equation*}%
if $l=2^{j}-1$. The B-spline basis functions $N_{lj}^{(r)}$ are nonnegative,
bounded by $1$ in absolute value, and form a partition of unity, i.e., 
\begin{equation}
\sum_{l=-r+1}^{2^{j}-1}N_{lj}^{(r)}(x)=1\qquad \text{for }x\in \lbrack 0,1],
\label{unity}
\end{equation}%
for every $j,r\in \mathbb{N}$.

The Schoenberg space $\mathcal{S}_{j}(r)$ is a finite-dimensional linear
subspace of $\mathcal{L}^{2}$. The ortho-projection $\pi _{j}^{(r)}$ from $%
\mathcal{L}^{2}$ onto $\mathcal{S}_{j}(r)$ is given by 
\begin{equation*}
\pi _{j}^{(r)}(f)=\sum_{l=-r+1}^{2^{j}-1}\gamma _{lj}^{(r)}(f)N_{lj}^{(r)}
\end{equation*}%
where 
\begin{equation*}
\gamma
_{lj}^{(r)}(f)=\sum_{m=-r+1}^{2^{j}-1}2^{j}g_{j}^{(r)lm}%
\int_{0}^{1}N_{mj}^{(r)}(x)f(x)dx
\end{equation*}%
and $g_{j}^{(r)lm}$ is the $(l,m)$-element of the inverse of the $%
(2^{j}+r-1)\times (2^{j}+r-1)$ matrix 
\begin{equation*}
G_{j}^{(r)}=\left( \int_{0}^{2^{j}}N^{(r)}(u-l)N^{(r)}(u-m)du\right) _{l,m}.
\end{equation*}%
Note that $G_{j}^{(r)}$ is a symmetric bandmatrix with bandwidth $r$. The
projection can now also be written as 
\begin{equation}
\pi _{j}^{(r)}(f)(y)=\int_{0}^{1}K_{j}^{(r)}(x,y)f(x)dx  \label{proj_kern}
\end{equation}%
with the kernel given by 
\begin{equation*}
K_{j}^{(r)}(x,y)=2^{j}\sum_{l=-r+1}^{2^{j}-1}%
\sum_{m=-r+1}^{2^{j}-1}g_{j}^{(r)lm}N^{(r)}(2^{j}x-m)N^{(r)}(2^{j}y-l).
\end{equation*}%
We shall frequently need to bound the maximal row-sum of the absolute values
of the elements of the inverse of $G_{j}^{(r)}$, i.e., the $\ell ^{\infty }$%
-operator norm of the inverse of $G_{j}^{(r)}$. For this we use the
following special case of a result in Shadrin (2001, Theorem I and Section
4.2).

\begin{proposition}
\label{gram}For every $r\in \mathbb{N}$ there exist constants $%
0<d_{r}<\infty $ (independent of $j)$ such that for every $j\in \mathbb{N}$%
\begin{equation*}
\left\Vert \left( G_{j}^{(r)}\right) ^{-1}\right\Vert _{\infty \rightarrow
\infty }\leq d_{r}
\end{equation*}%
where $\left\Vert \cdot \right\Vert _{\infty \rightarrow \infty }$ denotes
the $\ell ^{\infty }$-operator norm on $\mathbb{R}^{2^{j}+r-1}$.
\end{proposition}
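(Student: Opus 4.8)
The statement is, up to normalization, a direct specialization of Shadrin's theorem, so the first thing I would do is match conventions. Changing variables $u=2^{j}x$ shows that $G_{j}^{(r)}$ is exactly the Gram matrix of the cardinal (unit-knot) B-splines $N^{(r)}(\cdot -l)$, $l=-r+1,\ldots ,2^{j}-1$, on the interval $[0,2^{j}]$, with knots at the integers. These satisfy $\int N^{(r)}(\cdot -l)=1$, $\Vert N^{(r)}\Vert _{\infty }\le 1$, the partition-of-unity identity (\ref{unity}), and $G_{j}^{(r)}$ is the symmetric positive-definite bandmatrix of bandwidth $r$ already noted in the text. Thus $\{G_{j}^{(r)}:j\in \mathbb{N}\}$ is a subfamily of the Gram matrices arising from arbitrary knot sequences (here the uniform, dyadically refined ones), and Shadrin (2001), Theorem I together with Section 4.2, bounds the $\ell ^{\infty }$-operator norm of the inverse of such Gram matrices by a constant depending only on the order $r$, uniformly in the knot sequence. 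Quoting this for the uniform knots on $[0,2^{j}]$ therefore yields a bound $d_{r}$ independent of $j$, which is the assertion. The only genuine point to check is that Shadrin's normalization of the B-splines (hence of the Gram matrix) coincides with the one above after the rescaling $u=2^{j}x$; because the knots are unit-spaced, the $\mathcal{L}^{1}$-normalization and the partition-of-unity normalization agree and no mesh-dependent weights intervene, so the match is immediate.

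Since here the knots are uniform, one can in fact avoid the full strength of Shadrin's theorem and argue by hand, which I would spell out as a safeguard. Two classical ingredients suffice. First, the cardinal B-spline basis is $\mathcal{L}^{2}$-stable uniformly in $j$: there are constants $0<A_{r}\le B_{r}<\infty $, depending only on $r$, with $A_{r}\Vert c\Vert _{2}^{2}\le c^{\prime }G_{j}^{(r)}c\le B_{r}\Vert c\Vert _{2}^{2}$ for every coefficient vector $c$ and every $j$. Consequently the spectral condition number satisfies $\kappa (G_{j}^{(r)})\le B_{r}/A_{r}$ and the smallest eigenvalue is bounded below by $A_{r}$, both uniformly in $j$. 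Second, the entries of the inverse of a symmetric positive-definite bandmatrix decay geometrically off the diagonal: by the classical Demko, Moss and Smith estimate there are $q=q(r)\in (0,1)$ and $C=C(r)<\infty $, depending only on the bandwidth $r$ and on $A_{r},B_{r}$ (hence only on $r$), with $\bigl|(G_{j}^{(r)})^{-1}_{lm}\bigr|\le C\,q^{|l-m|}$ for all $l,m$. Summing the geometric series then gives
\begin{equation*}
\left\Vert (G_{j}^{(r)})^{-1}\right\Vert _{\infty \rightarrow \infty }=\max_{l}\sum_{m}\bigl|(G_{j}^{(r)})^{-1}_{lm}\bigr|\le C\sum_{d\in \mathbb{Z}}q^{|d|}=C\,\frac{1+q}{1-q}=:d_{r},
\end{equation*}
a bound independent of $j$.

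The main obstacle in this self-contained route is the uniform lower Riesz bound $A_{r}$, in particular its validity for the truncated boundary B-splines (those with $l$ near $-r+1$ or near $2^{j}-1$), where $G_{j}^{(r)}$ departs from its Toeplitz interior structure. For the bi-infinite cardinal case uniform stability is immediate, since the symbol of the associated Toeplitz Gram matrix is bounded away from $0$ and $\infty $; the finite truncated case with its finitely many boundary corrections is then handled by the standard B-spline stability estimates, whose constants depend only on $r$. I would stress that the corresponding statement for \emph{non-uniform} knots is genuinely deep: there the diagonal of $G$ scales like the local mesh $h_{l}$ and the naive combination of off-diagonal decay with the mesh weights produces mesh-ratio factors that need not be bounded, which is precisely the difficulty Shadrin resolved. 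For our uniform knots these factors are trivial, so either the citation of Shadrin (which subsumes the boundary point and delivers the bound directly, and is why I would quote it in the final write-up) or the elementary argument above completes the proof.
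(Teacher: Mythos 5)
Your first paragraph is exactly what the paper does: Proposition \ref{gram} is not proved in the paper at all, but is stated as a quoted special case of Shadrin (2001, Theorem I and Section 4.2), and your convention-matching (unit-knot B-splines on $[0,2^{j}]$, agreement of the $\mathcal{L}^{1}$ and partition-of-unity normalizations for unit spacing) is the correct and complete justification for that citation. Your second, self-contained route is a genuinely different and sound argument that the paper does not contain, and it is worth recording what it buys: since the knots here are uniform, the two classical ingredients you invoke -- a lower Riesz bound $A_{r}\Vert c\Vert _{2}^{2}\leq c^{\prime }G_{j}^{(r)}c$ with $A_{r}$ depending only on $r$ (obtainable from de Boor's dual functionals, which handle the truncated boundary splines because each such spline still contains a full knot interval inside $[0,2^{j}]$), and the Demko--Moss--Smith geometric decay $\bigl|(G_{j}^{(r)})^{-1}_{lm}\bigr|\leq Cq^{|l-m|}$ for symmetric positive definite band matrices with constants depending only on the bandwidth and the spectral bounds -- together give the row-sum bound without any appeal to Shadrin's theorem, whose real content is uniformity over \emph{arbitrary} knot sequences. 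You also correctly identify the one place where the elementary route could fail if done carelessly (the boundary-truncated B-splines, where $G_{j}^{(r)}$ is not Toeplitz), and you handle it appropriately. So the citation route reproduces the paper; the elementary route is a more transparent proof of the special case actually needed, at the cost of importing two standard results (B-spline $\mathcal{L}^{2}$-stability and band-matrix inverse decay) instead of one deep one.
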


We furthermore note that for $r\geq 2$ the Schoenberg space $\mathcal{S}%
_{j}(r)$ is contained in the Sobolev space of order $r-1$, and thus is also
contained in $\mathsf{B}_{r-1}$. In fact, for every $r\geq 1$ we have that $%
\mathcal{S}_{j}(r)$ is contained in $\mathsf{B}_{s}$ for $s\leq r-1/2$
(DeVore and Lorentz (1993), Chap. 12, Lemma 3.1). Some approximation
properties of splines that we shall use in the sequel are summarized in
Appendix \ref{App_Spline}.

For the spline projection estimators defined in Section \ref{indinf} we make
the useful observation that for every $J\geq 1$ and $r\geq 1$%
\begin{equation}
\left\Vert p_{k,J,r}(\theta )\right\Vert _{\infty }\leq 2^{J}d_{r}(2^{J}+r-1)
\label{sup-norm_estim}
\end{equation}%
holds uniformly in $\theta \in \Theta $, $k\geq 1$, and $v_{1},\ldots
,v_{k}\in \mathcal{V}$. [To see this note that the B-spline basis functions
are uniformly bounded by $1$ and that the coefficients satisfy $\left\vert 
\hat{\gamma}_{lJ}^{(r)}(\theta )\right\vert \leq 2^{J}d_{r}$ uniformly in $%
\theta \in \Theta $, $k\geq 1$, $-r+1\leq l\leq 2^{J}-1$, and $v_{1},\ldots
,v_{k}\in \mathcal{V}$ by Proposition \ref{gram}.] The analogous relation is
true for $\left\Vert p_{n,j,r_{\ast }}\right\Vert _{\infty }$, as well as
for $\left\Vert Ep_{k,J,r}(\theta )\right\Vert _{\infty }$ and $\left\Vert
Ep_{n,j,r_{\ast }}\right\Vert _{\infty }$.

\section{Proofs\label{proof_main}}

We shall use repeatedly in this section the fact that $\xi _{0}:=\inf_{x\in
\lbrack 0,1]}p(\theta _{0},x)>0$ under Assumptions P1(i),(ii) (as $p(\theta
_{0})$ is continuous and positive on $[0,1]$ under these assumptions).

\subsection{Proof of Proposition \protect\ref{consist}\label{proof_consist}}

Define the function 
\begin{equation}
Q(\theta )=\int_{0}^{1}(p(\theta _{0})-p(\theta ))^{2}p^{-1}(\theta
_{0})d\lambda ,  \label{Q}
\end{equation}%
which is real-valued and is continuous in $\theta $ by dominated
convergence, observing that $\xi _{0}>0$ and that Assumption P1(ii) implies
sup-norm boundedness of $\mathcal{P}_{\Theta }$ in view of the discussion
following Proposition \ref{elem} in Appendix \ref{App_Spline}. The unique
minimizer of $Q(\theta )$ over $\Theta $ is $\theta _{0}$ in view of the
identifiability assumption made in Assumption P1(i). To establish
consistency, it is hence sufficient to prove 
\begin{equation*}
\sup_{\theta \in \Theta }\left\vert \mathcal{Q}_{n,k}(\theta )-Q(\theta
)\right\vert \rightarrow 0
\end{equation*}%
in probability as $n\wedge k\rightarrow \infty $. Note that this supremum is
measurable as $\mathcal{Q}_{n,k}(\theta )$ and $Q(\theta )$ are continuous
and $\Theta $ is separable. [For continuity of $\mathcal{Q}_{n,k}$ see the
proof of Proposition \ref{exist} in Appendix \ref{App_Meas}.] Consider the
set $A_{n}^{\ast }=\left\{ \inf_{y\in \lbrack 0,1]}p_{n,j_{n},r_{\ast
}}(y)\geq \xi _{0}/2\right\} $, which is clearly measurable. Since $\xi
_{0}>0$ as noted above, Corollary \ref{consistency_2} (applied with $%
t=\delta \wedge \tau \wedge 1$ and noting that $p(\theta _{0})$ is a
continuous version of $p_{0}$ in view of Assumption P1(i)) implies that $\Pr
(A_{n}^{\ast })\rightarrow 1$ as $n\rightarrow \infty $. A simple
calculation now shows that on the event $A_{n}^{\ast }$ (since $A_{n}^{\ast
}\subseteq A_{n}$)%
\begin{eqnarray*}
\mathcal{Q}_{n,k}(\theta )-Q(\theta ) &=&\int_{0}^{1}(p_{n,j_{n},r_{\ast
}}-p(\theta _{0}))\left[ 1-\frac{p(\theta )^{2}}{p_{n,j_{n},r_{\ast
}}p(\theta _{0})}\right] d\lambda +\int_{0}^{1}(p_{k,J_{k},r}(\theta
)-p(\theta ))^{2}p_{n,j_{n},r_{\ast }}^{-1} \\
&&+2\int_{0}^{1}(p_{k,J_{k},r}(\theta )-p(\theta ))\left[ \frac{p(\theta )}{%
p_{n,j_{n},r_{\ast }}}-1\right] d\lambda
\end{eqnarray*}%
holds.\ On $A_{n}^{\ast }$ we can then obtain the bound%
\begin{eqnarray*}
\sup_{\theta \in \Theta }\left\vert \mathcal{Q}_{n,k}(\theta )-Q(\theta
)\right\vert &\leq &\left\Vert p_{n,j_{n},r_{\ast }}-p(\theta
_{0})\right\Vert _{\infty }\left( 1+2\xi _{0}^{-2}\sup_{\theta \in \Theta
}\left\Vert p(\theta )\right\Vert _{\infty }^{2}\right) \\
&&+2\xi _{0}^{-1}\sup_{\theta \in \Theta }\left\Vert p_{k,J_{k},r}(\theta
)-p(\theta )\right\Vert _{\infty }^{2} \\
&&+\sup_{\theta \in \Theta }\left\Vert p_{k,J_{k},r}(\theta )-p(\theta
)\right\Vert _{\infty }\left( 2+4\xi _{0}^{-1}\sup_{\theta \in \Theta
}\left\Vert p(\theta )\right\Vert _{\infty }\right) .
\end{eqnarray*}%
The sup-norm boundedness of $\mathcal{P}_{\Theta }$ together with
Corollaries \ref{consistency} and \ref{consistency_2} (applied with $%
t=\delta \wedge \tau \wedge 1$) then complete the proof.

\subsection{An Intermediate Result\label{intermed}}

Consider the objective function%
\begin{equation}
Q_{n}(\theta ):=Q_{n,j,r_{\ast }}(\theta )=\left\{ 
\begin{array}{cc}
\int_{0}^{1}\left( p_{n,j,r_{\ast }}-p(\theta )\right) ^{2}p_{n,j,r_{\ast
}}^{-1}d\lambda & \text{ \ on the event }A_{n} \\ 
0 & \text{otherwise}%
\end{array}%
\right. ,  \label{obj_inter}
\end{equation}%
corresponding to the `ideal' case $k=\infty $. Let $\hat{\theta}_{n}:=\hat{%
\theta}_{n,j,r_{\ast }}$ denote an arbitrary measurable minimizer of (\ref%
{obj_inter}) over $\Theta $. [The existence of such an estimator is
established in Proposition \ref{exist_1} in Appendix \ref{App_Meas}.]

\begin{theorem}
\label{0815}Suppose $r_{\ast }\geq 2$ holds and Assumption P1 is satisfied
with $1/2<\tau <r_{\ast }$. If $2^{j_{n}}\sim n^{1/(2\tau +1)}$, then, as $%
n\rightarrow \infty $, 
\begin{equation*}
\sqrt{n}\left( \hat{\theta}_{n}-\theta _{0}\right) \rightarrow ^{d}N\left(
0,I(\theta _{0})\right) .
\end{equation*}
\end{theorem}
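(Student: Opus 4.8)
The plan is to treat $\hat{\theta}_{n}$ as a classical $M$-estimator and carry out the usual gradient/Hessian expansion, the only nonstandard points being the linearisation of the gradient and the control of the smoothing bias, both of which I would handle via the self-adjointness and idempotency of the spline projection $\pi _{j_{n}}^{(r_{\ast })}$ combined with Assumptions P1(ii) and P1(iv). First I would establish consistency: arguing exactly as in the proof of Proposition \ref{consist} but with the simulation term absent, one gets $\sup_{\theta \in \Theta }|Q_{n}(\theta )-Q(\theta )|\rightarrow 0$ in probability (this now needs only $\Vert p_{n,j_{n},r_{\ast }}-p(\theta _{0})\Vert _{\infty }\rightarrow 0$, from Corollaries \ref{consistency} and \ref{consistency_2}), and since $\theta _{0}$ is the unique minimiser of $Q$ by the identifiability in P1(i), this yields $\hat{\theta}_{n}\rightarrow \theta _{0}$. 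On the event $A_{n}^{\ast }=\{\inf_{y}p_{n,j_{n},r_{\ast }}(y)\geq \xi _{0}/2\}$, which has probability tending to one, $p_{n,j_{n},r_{\ast }}^{-1}$ is bounded by $2/\xi _{0}$ and, by P1(iii), $Q_{n}$ is twice continuously differentiable in a neighbourhood of $\theta _{0}$. Since $\theta _{0}$ is interior and $\hat{\theta}_{n}$ is consistent, the first-order condition $\nabla _{\theta }Q_{n}(\hat{\theta}_{n})=0$ holds with probability tending to one, and a Taylor expansion gives $\sqrt{n}(\hat{\theta}_{n}-\theta _{0})=-[\nabla _{\theta }^{2}Q_{n}(\bar{\theta}_{n})]^{-1}\sqrt{n}\,\nabla _{\theta }Q_{n}(\theta _{0})$ for some intermediate point $\bar{\theta}_{n}\rightarrow ^{p}\theta _{0}$.

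For the Hessian, differentiating twice yields $\nabla _{\theta }^{2}Q_{n}(\theta )=2\int \nabla _{\theta }p(\theta )\nabla _{\theta }p(\theta )^{\prime }p_{n,j_{n},r_{\ast }}^{-1}d\lambda -2\int (p_{n,j_{n},r_{\ast }}-p(\theta ))\nabla _{\theta }^{2}p(\theta )\,p_{n,j_{n},r_{\ast }}^{-1}d\lambda$. Using $\Vert p_{n,j_{n},r_{\ast }}^{-1}-p(\theta _{0})^{-1}\Vert _{\infty }\rightarrow ^{p}0$ on $A_{n}^{\ast }$ together with the domination conditions in P1(iii), one shows that $\nabla _{\theta }^{2}Q_{n}(\theta )\rightarrow ^{p}M(\theta )$ uniformly over a closed ball about $\theta _{0}$, where $M$ is continuous and $M(\theta _{0})=2\int \nabla _{\theta }p(\theta _{0})\nabla _{\theta }p(\theta _{0})^{\prime }p(\theta _{0})^{-1}d\lambda =2I(\theta _{0})^{-1}$ (the second integral vanishing at $\theta _{0}$ because $p_{n,j_{n},r_{\ast }}-p(\theta _{0})\rightarrow 0$). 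Continuity of $M$ and $\bar{\theta}_{n}\rightarrow ^{p}\theta _{0}$ then give $\nabla _{\theta }^{2}Q_{n}(\bar{\theta}_{n})\rightarrow ^{p}2I(\theta _{0})^{-1}$, which is invertible by the positive-definiteness assumption in P1(iii).

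The core of the argument is the gradient $\nabla _{\theta }Q_{n}(\theta _{0})=-2\int (p_{n,j_{n},r_{\ast }}-p(\theta _{0}))\nabla _{\theta }p(\theta _{0})\,p_{n,j_{n},r_{\ast }}^{-1}d\lambda$. I would first replace $p_{n,j_{n},r_{\ast }}^{-1}$ by $p(\theta _{0})^{-1}$; the incurred error equals $2\int (p_{n,j_{n},r_{\ast }}-p(\theta _{0}))^{2}\nabla _{\theta }p(\theta _{0})(p_{n,j_{n},r_{\ast }}p(\theta _{0}))^{-1}d\lambda$, which on $A_{n}^{\ast }$ is $O_{P}(\Vert p_{n,j_{n},r_{\ast }}-p(\theta _{0})\Vert _{2}^{2})$ since $\nabla _{\theta }p(\theta _{0})$ is bounded (P1(iv) and Proposition \ref{elem}). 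As the mean integrated squared error is of order $n^{-2\tau /(2\tau +1)}$ under $2^{j_{n}}\sim n^{1/(2\tau +1)}$, this error is $o_{P}(n^{-1/2})$ precisely because $\tau >1/2$. Setting $f:=\nabla _{\theta }p(\theta _{0})\,p(\theta _{0})^{-1}$, it thus remains to analyse $\sqrt{n}\int f\,(p_{n,j_{n},r_{\ast }}-p(\theta _{0}))\,d\lambda$. Writing $p_{n,j_{n},r_{\ast }}=\int K_{j_{n}}^{(r_{\ast })}(x,\cdot )dP_{n}(x)$ and using the self-adjointness of $\pi _{j_{n}}^{(r_{\ast })}$ in $\mathcal{L}^{2}$, this equals $\sqrt{n}\int \pi _{j_{n}}^{(r_{\ast })}(f)\,d(P_{n}-P)+\sqrt{n}\int (\pi _{j_{n}}^{(r_{\ast })}(f)-f)\,p(\theta _{0})\,d\lambda$.

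The first (stochastic) summand I would linearise to $\sqrt{n}\int f\,d(P_{n}-P)$: the discrepancy $\sqrt{n}\int (\pi _{j_{n}}^{(r_{\ast })}(f)-f)\,d(P_{n}-P)$ is centred and has variance at most $\Vert p(\theta _{0})\Vert _{\infty }\Vert \pi _{j_{n}}^{(r_{\ast })}(f)-f\Vert _{2}^{2}\rightarrow 0$ (since $f\in \mathsf{B}_{\varsigma \wedge \tau }$ with $\varsigma \wedge \tau >1/2$, being a product of a $\mathsf{B}_{\varsigma }$ function and the reciprocal of the positive density $p(\theta _{0})\in \mathsf{B}_{\tau }$, so that $\Vert \pi _{j_{n}}^{(r_{\ast })}(f)-f\Vert _{2}\rightarrow 0$ by the spline approximation bounds of Appendix \ref{App_Spline}); alternatively one invokes the uniform central limit theorem for spline projection estimators. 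The second (bias) summand is the decisive term and the main obstacle: by self-adjointness and idempotency of $\pi _{j_{n}}^{(r_{\ast })}$ the inner product $\int (\pi _{j_{n}}^{(r_{\ast })}(f)-f)\,\pi _{j_{n}}^{(r_{\ast })}(p(\theta _{0}))\,d\lambda$ vanishes, so it equals $\int (\pi _{j_{n}}^{(r_{\ast })}(f)-f)(p(\theta _{0})-\pi _{j_{n}}^{(r_{\ast })}(p(\theta _{0})))\,d\lambda$, which by Cauchy--Schwarz is $O(\Vert \pi _{j_{n}}^{(r_{\ast })}(f)-f\Vert _{2}\,\Vert \pi _{j_{n}}^{(r_{\ast })}(p(\theta _{0}))-p(\theta _{0})\Vert _{2})=O(2^{-j_{n}((\varsigma \wedge \tau )+\tau )})$. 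Because both smoothness exponents exceed $1/2$, the exponent $(\varsigma \wedge \tau )+\tau $ exceeds $1$, and hence $\sqrt{n}\,2^{-j_{n}((\varsigma \wedge \tau )+\tau )}\rightarrow 0$ under $2^{j_{n}}\sim n^{1/(2\tau +1)}$. It is exactly this product structure, exploiting the smoothness of both $f$ and $p(\theta _{0})$, that makes the smoothing bias negligible at the $\sqrt{n}$ scale; a crude bound using the approximation rate of $p_{n,j_{n},r_{\ast }}$ alone would fail. Consequently $\sqrt{n}\,\nabla _{\theta }Q_{n}(\theta _{0})=-2\sqrt{n}\int f\,d(P_{n}-P)+o_{P}(1)$, and since $E[f(X)]=\int \nabla _{\theta }p(\theta _{0},x)\,dx=0$ (differentiation under the integral being justified by the domination in P1(iii)) and $E[f(X)f(X)^{\prime }]=I(\theta _{0})^{-1}$, the multivariate central limit theorem gives $\sqrt{n}\,\nabla _{\theta }Q_{n}(\theta _{0})\rightarrow ^{d}N(0,4I(\theta _{0})^{-1})$. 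Combining with the Hessian limit by Slutsky's theorem, the asymptotic variance is $(2I(\theta _{0})^{-1})^{-1}(4I(\theta _{0})^{-1})(2I(\theta _{0})^{-1})^{-1}=I(\theta _{0})$, as claimed.
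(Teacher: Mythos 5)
Your proposal is correct, and its skeleton coincides with the paper's proof of Theorem \ref{0815}: consistency, restriction to the event $A_{n}^{\ast }$, the first-order condition and Taylor expansion, uniform convergence of the Hessian on $B(\theta _{0})$ to a continuous limit equal to $2I(\theta _{0})^{-1}$ at $\theta _{0}$, and the score decomposition obtained by swapping $p_{n,j_{n},r_{\ast }}^{-1}$ for $p(\theta _{0})^{-1}$ at the cost of a remainder of order $O_{P}(\sqrt{n}\Vert p_{n,j_{n},r_{\ast }}-p(\theta _{0})\Vert _{2}^{2})=o_{P}(1)$ are all exactly the paper's steps. The genuine difference is how the linear term $\sqrt{n}\int f\,(p_{n,j_{n},r_{\ast }}-p(\theta _{0}))\,d\lambda $, $f=\nabla _{\theta }p(\theta _{0})p(\theta _{0})^{-1}$, is shown to be asymptotically normal: the paper invokes its Theorem \ref{UCLT_2}, a uniform CLT over Besov balls whose proof rests on the entropy and moment inequalities of Appendix \ref{App_Moment}, whereas you inline an elementary argument for the singleton class --- the decomposition $(P_{n,j_{n},r_{\ast }}-P)(f)=(P_{n}-P)(\pi _{j_{n}}^{(r_{\ast })}f)+\int (\pi _{j_{n}}^{(r_{\ast })}f-f)\,p(\theta _{0})\,d\lambda $, a Chebyshev bound with variance at most $\Vert p(\theta _{0})\Vert _{\infty }\Vert \pi _{j_{n}}^{(r_{\ast })}f-f\Vert _{2}^{2}\rightarrow 0$ for the stochastic discrepancy, and the self-adjointness/idempotency trick plus Cauchy--Schwarz giving the bias rate $2^{-j_{n}(s+\tau )}$ with $s+\tau >1$, which is precisely the term-$B$ bound (\ref{bound_B}) inside the paper's proof of Theorem \ref{UCLT}. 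Since after Cram\'{e}r--Wold only finitely many fixed functions are involved, your variance argument fully replaces the empirical-process machinery; what you give up is only uniformity over $\theta $ and over Besov balls of $f$, which Theorem \ref{0815} does not need but which the paper needs elsewhere (e.g.\ in Proposition \ref{wurzel0} for Part a1 of Theorem \ref{main}), so citing Theorem \ref{UCLT_2} lets the paper reuse one result for both purposes. Two small points of precision: the mean-value expansion of a vector-valued gradient must be applied row-wise (the intermediate point may differ across rows), as the paper does; and Proposition \ref{elem}(b) is stated only for exponents in $(1/2,1)$, so the claim $f\in \mathsf{B}_{\varsigma \wedge \tau }$ should be replaced by $f\in \mathsf{B}_{s}$ for some $1/2<s<1$ with $s\leq \varsigma \wedge \tau $ --- with this cap one still has $s+\tau >1$, so your rate argument goes through unchanged.
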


\begin{proof}
Consistency of $\hat{\theta}_{n}$ follows from Proposition \ref{cons} in
Appendix \ref{App_Meas} by choosing $\delta $ in that proposition
sufficiently close to $1/2$. It follows that $\hat{\theta}_{n}\in B(\theta
_{0})$ with probability tending to $1$, and hence $\hat{\theta}_{n}$ belongs
to the interior of $\Theta $ with probability tending to $1$. In the
following we work only on the intersection of the event $\left\{ \hat{\theta}%
_{n}\in B(\theta _{0})\right\} $ with $A_{n}^{\ast }=\left\{ \inf_{y\in
\lbrack 0,1]}p_{n,j_{n},r_{\ast }}(y)\geq \xi _{0}/2\right\} $ which also
has probability converging to $1$ as a consequence of Corollary \ref%
{consistency_2} (applied with some $t$ satisfying $1/2<t\leq \tau \wedge 1$%
). Note that $\left\Vert p_{n,j_{n},r_{\ast }}\right\Vert _{\infty }<\infty $
holds, and that $\left\Vert p_{n,j_{n},r_{\ast }}^{-1}\right\Vert _{\infty
}\leq 2/\xi _{0}$ on the event $A_{n}^{\ast }$. Furthermore, by Assumption
P1(ii) the function $p(\theta )$ is bounded, uniformly in $\theta $, cf.
Proposition \ref{elem} and the attending discussion in Appendix \ref%
{App_Spline}. Assumption P1(iii) and dominated convergence then show that $%
Q_{n}(\theta )$ is twice continuously differentiable on the open ball $%
B(\theta _{0})$ with derivatives given by%
\begin{equation*}
\nabla _{\theta }Q_{n}(\theta )=-2\int_{0}^{1}\left( p_{n,j_{n},r_{\ast
}}-p(\theta )\right) p_{n,j_{n},r_{\ast }}^{-1}\nabla _{\theta }p(\theta
)d\lambda ,
\end{equation*}%
\begin{equation}
\nabla _{\theta }^{2}Q_{n}(\theta )=2\int_{0}^{1}p_{n,j_{n},r_{\ast
}}^{-1}\nabla _{\theta }p(\theta )\nabla _{\theta }p(\theta )^{\prime
}d\lambda -2\int_{0}^{1}\left( p_{n,j_{n},r_{\ast }}-p(\theta )\right)
p_{n,j_{n},r_{\ast }}^{-1}\nabla _{\theta }^{2}p(\theta )d\lambda ,
\label{Q_n_2nd_deriv}
\end{equation}%
and these derivatives are measurable functions for every $\theta \in
B(\theta _{0})$. Since $\hat{\theta}_{n}$ is an interior maximizer of $Q_{n}$
(on the event considered), we have that $\nabla _{\theta }Q_{n}(\hat{\theta}%
_{n})=0$. Consequently, a standard Taylor expansions gives%
\begin{equation}
0=\nabla _{\theta }Q_{n}(\hat{\theta}_{n})=\nabla _{\theta }Q_{n}(\theta
_{0})+\nabla _{\theta }^{2}Q_{n}^{\ast }(\hat{\theta}_{n}-\theta _{0}),
\label{classic_0}
\end{equation}%
where the $i$-th row of $\nabla _{\theta }^{2}Q_{n}^{\ast }$ equals the
corresponding row of $\nabla _{\theta }^{2}Q_{n}$ evaluated at a mean-value $%
\tilde{\theta}_{n}^{(i)}$ which may depend on the row-index (measurability
of $\tilde{\theta}_{n}^{(i)}$ being no concern here). We now first establish
that $n^{1/2}\nabla _{\theta }Q_{n}(\theta _{0})$ is asymptotically normal
with mean zero and variance-covariance matrix $4\int_{0}^{1}\nabla _{\theta
}p(\theta _{0})\nabla _{\theta }p(\theta _{0})^{\prime }p^{-1}(\theta
_{0})d\lambda $. To this end write $(-1/2)n^{1/2}\nabla _{\theta
}Q_{n}(\theta _{0})$ as%
\begin{eqnarray*}
&&\sqrt{n}\int_{0}^{1}\left( p_{n,j_{n},r_{\ast }}-p(\theta _{0})\right)
p(\theta _{0})^{-1}\nabla _{\theta }p(\theta _{0})d\lambda + \\
&&\sqrt{n}\int_{0}^{1}\left( p_{n,j_{n},r_{\ast }}-p(\theta _{0})\right)
(p_{n,j_{n},r_{\ast }}^{-1}-p(\theta _{0})^{-1})\nabla _{\theta }p(\theta
_{0})d\lambda ,
\end{eqnarray*}%
both terms being measurable. The first term in the above display now
converges to the required limit by Theorem \ref{UCLT_2} (applied with $%
t=\tau $, and some $s$ satisfying $1/2<s<1$, $s\leq \varsigma \wedge \tau $)
and the Cram\'{e}r-Wold device: To see this, observe that $p_{0}\in \mathcal{%
B}_{t}$ by Assumption P1(i),(ii) (since $p_{0}=p(\theta _{0})$ $\lambda $%
-a.e.). Furthermore, for every $\alpha \in \mathbb{R}^{b}$, $\alpha \neq 0$,
the function $f=p(\theta _{0})^{-1}\alpha ^{\prime }\nabla _{\theta
}p(\theta _{0})$ belongs to $\mathsf{B}_{\varsigma \wedge \tau }$ as a
consequence of Assumption P1(ii),(iv) and Proposition \ref{elem} in Appendix %
\ref{App_Spline}. Hence $\mathcal{F=}\left\{ f\right\} \subseteq \mathsf{B}%
_{s}$. The conditions on $j_{n}$ in Theorem \ref{UCLT_2} follow from the
assumption on $j_{n}$ in the current theorem. Finally note that $P(f)=0$
under Assumption P1. The second term in the above display is bounded in norm
(on the event $A_{n}^{\ast }$) by 
\begin{eqnarray*}
&&n^{1/2}\int_{0}^{1}\left( p_{n,j_{n},r_{\ast }}-p(\theta _{0})\right)
^{2}p(\theta _{0})^{-1}p_{n,j_{n},r_{\ast }}^{-1}\left\Vert \nabla _{\theta
}p(\theta _{0})\right\Vert d\lambda \\
&\leq &(2/\xi _{0}^{2})\sup_{x\in \lbrack 0,1]}\left\Vert \nabla _{\theta
}p(\theta _{0},x)\right\Vert n^{1/2}\left\Vert p_{n,j_{n},r_{\ast
}}-p(\theta _{0})\right\Vert _{2}^{2},
\end{eqnarray*}%
noting that $\left\Vert p(\theta _{0})^{-1}\right\Vert _{\infty }\leq \xi
_{0}^{-1}$, and that $\frac{\partial }{\partial \theta _{q}}p(\theta _{0})$
is bounded on $[0,1]$ for every $q$ since it belongs to $\mathsf{B}%
_{\varsigma }$ with $\varsigma >1/2$ by Assumption P1(iv). By Lemma \ref%
{havoc0} the r.h.s in the above display is $%
O_{p}(n^{-1/2}2^{j_{n}}+n^{1/2}2^{-2j_{n}\tau })$ which is $o_{p}(1)$
because of $\tau >1/2$.

Next we show that $\nabla _{\theta }^{2}Q_{n}^{\ast }$ converges to the
positive definite matrix $\nabla _{\theta }^{2}Q(\theta _{0})$ in (outer)
probability. To this end we first show that $\nabla _{\theta
}^{2}Q_{n}(\theta )$ converges to $\nabla _{\theta }^{2}Q(\theta )$
uniformly over $B(\theta _{0})$ in probability where $Q(\theta )$ has been
defined in (\ref{Q}). By Assumption P1 and dominated convergence we have
that $Q(\theta )$ is twice continuously differentiable on $B(\theta _{0})$
with%
\begin{equation*}
\nabla _{\theta }^{2}Q(\theta )=2\int_{0}^{1}p(\theta _{0})^{-1}\nabla
_{\theta }p(\theta )\nabla _{\theta }p(\theta )^{\prime }d\lambda
-2\int_{0}^{1}\left( p(\theta _{0})-p(\theta )\right) p(\theta
_{0})^{-1}\nabla _{\theta }^{2}p(\theta )d\lambda .
\end{equation*}%
We now see that 
\begin{eqnarray*}
&&\nabla _{\theta }^{2}Q_{n}(\theta )-\nabla _{\theta }^{2}Q(\theta ) \\
&=&2\int_{0}^{1}(p_{n,j_{n},r_{\ast }}^{-1}-p(\theta _{0})^{-1})\nabla
_{\theta }p(\theta )\nabla _{\theta }p(\theta )^{\prime
}-2\int_{0}^{1}\left( p_{n,j_{n},r_{\ast }}-p(\theta )\right)
(p_{n,j_{n},r_{\ast }}^{-1}-p(\theta _{0})^{-1})\nabla _{\theta
}^{2}p(\theta ) \\
&&+2\int_{0}^{1}\left( p(\theta _{0})-p_{n,j_{n},r_{\ast }}\right) p(\theta
_{0})^{-1}\nabla _{\theta }^{2}p(\theta )
\end{eqnarray*}%
and we obtain (the supremum being measurable because of continuity of $%
\nabla _{\theta }^{2}Q_{n}$ and $\nabla _{\theta }^{2}Q$ on $B(\theta _{0})$)%
\begin{eqnarray}
&&\sup_{\theta \in B(\theta _{0})}\left\Vert \nabla _{\theta
}^{2}Q_{n}(\theta )-\nabla _{\theta }^{2}Q(\theta )\right\Vert
\label{secondhalf} \\
&\leq &2\left\Vert p_{n,j_{n},r_{\ast }}-p(\theta _{0})\right\Vert _{\infty
}\sup_{\theta \in B(\theta _{0})}\left[ \int_{0}^{1}p_{n,j_{n},r_{\ast
}}^{-1}p(\theta _{0})^{-1}\left\Vert \nabla _{\theta }p(\theta )\right\Vert
^{2}d\lambda \right.  \notag \\
&&\left. +\int_{0}^{1}\left\vert p_{n,j_{n},r_{\ast }}-p(\theta )\right\vert
p_{n,j_{n},r_{\ast }}^{-1}p(\theta _{0})^{-1}\left\Vert \nabla _{\theta
}^{2}p(\theta )\right\Vert d\lambda +\int_{0}^{1}p(\theta
_{0})^{-1}\left\Vert \nabla _{\theta }^{2}p(\theta )\right\Vert d\lambda 
\right]  \notag \\
&\leq &\left\Vert p_{n,j_{n},r_{\ast }}-p(\theta _{0})\right\Vert _{\infty } 
\left[ 4\xi _{0}^{-2}\int_{0}^{1}\sup_{\theta \in B(\theta _{0})}\left\Vert
\nabla _{\theta }p(\theta )\right\Vert ^{2}d\lambda \right.  \notag \\
&&+\left. \left( 4\xi _{0}^{-2}\left( \left\Vert p_{n,j_{n},r_{\ast
}}\right\Vert _{\infty }+\sup_{\theta \in B(\theta _{0})}\left\Vert p(\theta
)\right\Vert _{\infty }\right) +2\xi _{0}^{-1}\right)
\int_{0}^{1}\sup_{\theta \in B(\theta _{0})}\left\Vert \nabla _{\theta
}^{2}p(\theta )\right\Vert d\lambda \right] =o_{p}(1),  \notag
\end{eqnarray}%
by Assumption P1 and Corollary \ref{consistency_2} (applied with a $t$
satisfying $1/2<t\leq \tau \wedge 1$). Since $\nabla _{\theta }^{2}Q(\theta
) $ is continuous at $\theta _{0}$ as shown above and since $\hat{\theta}%
_{n} $ is consistent, convergence of $\nabla _{\theta }^{2}Q_{n}^{\ast }$ to 
$\nabla _{\theta }^{2}Q(\theta _{0})$ in (outer) probability follows.

The central limit theorem for the score together with the convergence result
for $\nabla _{\theta }^{2}Q_{n}^{\ast }$ just established delivers now the
desired result: rewrite (\ref{classic_0}) as%
\begin{equation*}
0=n^{1/2}\nabla _{\theta }Q_{n}(\theta _{0})+\nabla _{\theta }^{2}Q(\theta
_{0})n^{1/2}(\hat{\theta}_{n}-\theta _{0})+\left( \nabla _{\theta
}^{2}Q_{n}^{\ast }-\nabla _{\theta }^{2}Q(\theta _{0})\right) n^{1/2}(\hat{%
\theta}_{n}-\theta _{0}),
\end{equation*}%
observe that $\nabla _{\theta }^{2}Q(\theta _{0})$ is positive definite by
Assumption P1(iii), and that the third term on the r.h.s. is of lower order
than the second one. This implies that $n^{1/2}(\hat{\theta}_{n}-\theta
_{0}) $ is stochastically bounded, and the desired result then easily
follows.
\end{proof}

\bigskip

For the same reasons as given in Remark \ref{rem_main}, the condition $\tau
<r_{\ast }$ in the above theorem is not really a restriction. Furthermore,
examining the proof shows that the conclusions of the theorem also hold for
other choices of $2^{j_{n}}$: e.g., the theorem (without the condition $\tau
<r_{\ast }$) holds for $2^{j_{n}}\sim n^{\nu }$ with $\nu $ satisfying $%
1/\left( 2\left( (\tau \wedge r_{\ast })+(\varsigma \wedge \tau \wedge
1)\right) \right) <\nu <1/2$.

\subsection{Proof of Part a1 of Theorem \protect\ref{main}\label%
{proof_parta1}}

We first provide an auxiliary result that relates the objective function $%
\mathcal{Q}_{n,k}(\theta )$ to the somewhat simpler objective function $%
Q_{n}(\theta )$ studied in the preceding section. Note that $k$ is not
linked to $n$ in the subsequent proposition.

\begin{proposition}
\label{wurzel0}Suppose $r\geq 2$ and $r_{\ast }\geq 2$ hold and Assumptions
P1(i),(ii) are satisfied for some $1/2<\tau <r_{\ast }\wedge r$. Suppose
further that Assumption R(i) is satisfied and that $2^{j_{n}}\sim
n^{1/(2\tau +1)}$ and $2^{J_{k}}\sim k^{1/(2\tau +1)}$. Then for every $%
\varepsilon >0$ there exists a positive real number $M(\varepsilon )$ and a
natural number $N(\varepsilon )$ such that 
\begin{equation}
\Pr \left( k^{1/2}\sup_{\theta \in \Theta }|\mathcal{Q}_{n,k}(\theta
)-Q_{n}(\theta )|>M(\varepsilon )\right) <\varepsilon  \label{approx}
\end{equation}%
holds for all $n\geq N(\varepsilon )$ and all $k\geq 1$.
\end{proposition}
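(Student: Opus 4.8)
The plan is to start from the elementary algebraic identity for the integrand and isolate the two pieces that really matter. Writing $p_{n}:=p_{n,j_{n},r_{\ast }}$ and $D_{k}(\theta ):=p_{k,J_{k},r}(\theta )-p(\theta )$, expanding the squares gives, on the event $A_{n}$,
\begin{equation*}
\mathcal{Q}_{n,k}(\theta )-Q_{n}(\theta )=-2\int_{0}^{1}D_{k}(\theta )\,d\lambda +2\int_{0}^{1}D_{k}(\theta )\,p(\theta )p_{n}^{-1}\,d\lambda +\int_{0}^{1}D_{k}(\theta )^{2}p_{n}^{-1}\,d\lambda .
\end{equation*}
Since $p_{k,J_{k},r}(\theta )$ and $p(\theta )$ are both densities — recall $\int_{0}^{1}p_{k,J_{k},r}(\theta )=1$ because $\pi _{J_{k}}^{(r)}$ reproduces constants and $K_{J_{k}}^{(r)}$ integrates to one — the first term vanishes identically, leaving a \emph{quadratic} term $\int D_{k}^{2}p_{n}^{-1}$ and a \emph{cross} term $2\int D_{k}\,h_{n}(\theta )$ with $h_{n}(\theta ):=p(\theta )p_{n}^{-1}$. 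I work throughout on $A_{n}^{\ast }=\{\inf _{y}p_{n}(y)\geq \xi _{0}/2\}$, which has probability tending to one by Corollary \ref{consistency_2} and on which $p_{n}^{-1}\leq 2/\xi _{0}$.

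The quadratic term is routine. On $A_{n}^{\ast }$ it is at most $(2/\xi _{0})\sup _{\theta }\Vert D_{k}(\theta )\Vert _{2}^{2}$, and splitting $D_{k}=(p_{k,J_{k},r}(\theta )-\pi _{J_{k}}^{(r)}p(\theta ))+(\pi _{J_{k}}^{(r)}p(\theta )-p(\theta ))$ into a stochastic and a projection-bias part, the bias contributes $\lesssim 2^{-2J_{k}\tau }$ uniformly in $\theta $ (boundedness of $\mathcal{P}_{\Theta }$ in $\mathsf{B}_{\tau }$) while the stochastic part is $O_{p}(2^{J_{k}}/k)$ uniformly in $\theta $ by the convergence-rate results of Section 6. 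With $2^{J_{k}}\sim k^{1/(2\tau +1)}$ one checks $k^{1/2}(2^{J_{k}}/k+2^{-2J_{k}\tau })\rightarrow 0$ exactly because $\tau >1/2$, so $k^{1/2}$ times the quadratic term is $o_{p}(1)$, uniformly in $k$.

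The cross term is the heart of the matter, and a naive Cauchy--Schwarz bound is hopelessly lossy. Instead I transfer the projection onto the smoother factor using the symmetry of $K_{J_{k}}^{(r)}$, namely $\int p_{k,J_{k},r}(\theta )h_{n}(\theta )=\int \pi _{J_{k}}^{(r)}(h_{n}(\theta ))\,dP_{k}(\theta )$, to obtain
\begin{equation*}
\int_{0}^{1}D_{k}(\theta )h_{n}(\theta )\,d\lambda =\int \pi _{J_{k}}^{(r)}(h_{n}(\theta ))\,d(P_{k}(\theta )-P(\theta ))+\int (\pi _{J_{k}}^{(r)}(h_{n}(\theta ))-h_{n}(\theta ))\,dP(\theta ).
\end{equation*}
The decisive device is to \emph{not} treat $\pi _{J_{k}}^{(r)}(h_{n}(\theta ))$ as one rough spline, but to write $\pi _{J_{k}}^{(r)}(h_{n}(\theta ))=h(\theta )+(h_{n}(\theta )-h(\theta ))+(\pi _{J_{k}}^{(r)}(h_{n}(\theta ))-h_{n}(\theta ))$, where $h(\theta ):=p(\theta )p(\theta _{0})^{-1}$ is a \emph{fixed}, $k$- and $n$-free function. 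The leading piece $\int h(\theta )\,d(P_{k}(\theta )-P(\theta ))$ is an empirical process over $V_{1},\ldots ,V_{k}$ indexed by the fixed class $\{v\mapsto h(\theta )(\rho (v,\theta )):\theta \in \Theta \}$, which has constant-order envelope and variance and, by Assumption R(i) together with the embedding $\mathsf{B}_{\tau }\hookrightarrow C^{\tau -1/2}$ (valid since $\tau >1/2$), is uniformly H\"{o}lder in $\theta $ with complexity independent of $k$ and $n$; hence $k^{1/2}\sup _{\theta }|\int h(\theta )\,d(P_{k}(\theta )-P(\theta ))|=O_{p}(1)$ by the moment inequalities of Appendix C. This single term produces the asserted $O_{p}(1)$ bound. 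The three remaining pieces are each $o_{p}(1)$ after multiplication by $k^{1/2}$, by playing smallness against roughness: $\int (h_{n}-h)\,d(P_{k}-P)$ has envelope of order $\Vert p_{n}-p(\theta _{0})\Vert _{\infty }$, which by Corollary \ref{consistency} tends to zero fast enough to defeat the $\sqrt{\log n}$ stemming from the $\sim 2^{j_{n}}$ Lipschitz roughness of $p_{n}^{-1}$; $\int (\pi _{J_{k}}^{(r)}h_{n}-h_{n})\,d(P_{k}-P)$ has $\mathcal{L}^{2}$-size $\lesssim 2^{-J_{k}\tau ^{\prime }}$, which defeats the $\sqrt{J_{k}}$ coming from the spline roughness; and the bias $\int (\pi _{J_{k}}^{(r)}h_{n}-h_{n})\,dP(\theta )$ is turned, via orthogonality of the projection residual to $\mathcal{S}_{J_{k}}(r)$, into $\int (\pi _{J_{k}}^{(r)}h_{n}-h_{n})(p(\theta )-\pi _{J_{k}}^{(r)}p(\theta ))$, whence $\lesssim 2^{-J_{k}\tau ^{\prime }}2^{-J_{k}\tau }=o(k^{-1/2})$ as soon as $\tau ^{\prime }>1/2$.

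Here $\tau ^{\prime }\in (1/2,\tau ]$ is the Besov smoothness of $h_{n}(\theta )=p(\theta )p_{n}^{-1}$, and the main obstacle is twofold. First, I must secure $\sup _{\theta }\Vert h_{n}(\theta )\Vert _{\tau ^{\prime },2}=O_{p}(1)$ uniformly in large $n$: this uses that $\mathsf{B}_{s}$ is a Banach algebra and is stable under reciprocals of functions bounded away from zero when $s>1/2$, together with a uniform bound $\Vert p_{n}\Vert _{\tau ^{\prime },2}=O_{p}(1)$ for the spline projection estimator (a Bernstein-type estimate combined with the rates of Section 6). Second, and more delicately, the whole argument must be uniform in $k\geq 1$ and in $n\geq N(\varepsilon )$; this is precisely why the decomposition is engineered so that the only genuinely $O_{p}(1)$ contribution comes from the fixed class $\{h(\theta )\circ \rho (\cdot ,\theta )\}$, every $k$- and $n$-dependent effect being confined to the $o_{p}(1)$ remainders. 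Establishing the uniform empirical-process bound for that fixed class, and reading off its complexity from Assumption R(i), is the step I expect to demand the most care.
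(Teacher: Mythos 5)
Your proposal is correct in substance, but it takes a genuinely different route from the paper's proof. The paper starts from the same algebraic identity (in factored form, on $A_{n}^{\ast }$: $\mathcal{Q}_{n,k}(\theta )-Q_{n}(\theta )=\int_{0}^{1}(p_{k,J_{k},r}(\theta )-p(\theta ))\bigl[(p_{k,J_{k},r}(\theta )+p(\theta ))p_{n,j_{n},r_{\ast }}^{-1}-2\bigr]d\lambda $), but then makes one decisive move that you avoid: on a high-probability event it places the \emph{entire} random multiplier $(p_{k,J_{k},r}(\theta )+p(\theta ))p_{n,j_{n},r_{\ast }}^{-1}-2$ inside a \emph{fixed} Besov ball $\mathcal{F}=\{f\in \mathsf{B}_{s}:\Vert f\Vert _{s,2}\leq D^{\prime }\}$ (via Corollaries \ref{consistency} and \ref{consistency_2} and the algebra/reciprocal stability of Proposition \ref{elem}), so that the supremum is dominated by $\sqrt{k}\sup_{\theta }\Vert P_{k,J_{k},r}(\theta )-P(\theta )\Vert _{\mathcal{F}}$, which is $O_{p}(1)$ uniformly in $k$ by Theorem \ref{UCLT}; all $n$- and $k$-dependence of the multiplier is thereby neutralized, and your quadratic and cross terms are handled in one stroke. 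You instead kill the linear term by mass conservation, dispose of the quadratic term by direct $\mathcal{L}^{2}$ rates, and, for the cross term, freeze the multiplier at $h(\theta )=p(\theta )p(\theta _{0})^{-1}$ and control the perturbations $h_{n}-h$ and $\pi _{J_{k}}^{(r)}h_{n}-h_{n}$ by envelope-versus-entropy trade-offs; in effect you re-derive a specialized version of Theorem \ref{UCLT} inline (your projection transfer via self-adjointness, the orthogonality bias bound, and the moment/entropy estimates are exactly the ingredients of its proof and of Proposition \ref{ent1}). What your route buys is transparency: the $O_{p}(1)$ magnitude is visibly produced by a single fixed Donsker class, every $n$- and $k$-dependent effect being $o_{p}(1)$. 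What it costs is the extra conditional empirical-process argument for the $n$-dependent random class $\{(h_{n}(\theta )-h(\theta ))\circ \rho (\cdot ,\theta ):\theta \in \Theta \}$, which the paper's device sidesteps entirely; to complete that step you need a polynomial \emph{rate} for $\Vert p_{n,j_{n},r_{\ast }}-p(\theta _{0})\Vert _{\infty }$ (the corollary you cite is only qualitative, so you must return to Lemma \ref{havoc0}, Proposition \ref{proj_error_2}, and the embedding in Proposition \ref{elem}), and the entropy of that class should be obtained by embedding it into a product class as in Proposition \ref{ent1}, since Assumption P1 provides no quantitative sup-norm modulus for $\theta \mapsto p(\theta )$. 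Two minor slips: the stochastic part of the quadratic term is $O_{p}(2^{J_{k}}J_{k}/k)$, not $O_{p}(2^{J_{k}}/k)$ (Lemma \ref{uniformrate} carries a logarithmic factor, which is harmless here since $\tau >1/2$ still wins), and the sup-norm consistency of $p_{n,j_{n},r_{\ast }}$ is Corollary \ref{consistency_2}, not Corollary \ref{consistency}.
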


\begin{proof}
First note that the supremum in (\ref{approx}) is measurable since $\mathcal{%
Q}_{n,k}(\theta )$ and $Q_{n}(\theta )$ are continuous in $\theta $ as noted
before, cf. Section \ref{proof_consist}. For given $\varepsilon >0$ choose $%
N(\varepsilon )$ large enough such that for $n\geq N(\varepsilon )$ we have $%
\Pr \left( A_{n}^{\ast }\right) >1-\varepsilon $ where $A_{n}^{\ast
}=\left\{ \inf_{y\in \lbrack 0,1]}p_{n,j_{n},r_{\ast }}(y)\geq \xi
_{0}/2\right\} $. This is possible by Corollary \ref{consistency_2}. A
simple calculation shows that on the event $A_{n}^{\ast }$%
\begin{equation*}
\mathcal{Q}_{n,k}(\theta )-Q_{n}(\theta )=\int_{0}^{1}(p_{k,J_{k},r}(\theta
)-p(\theta ))\left[ \frac{p_{k,J_{k},r}(\theta )+p(\theta )}{%
p_{n,j_{n},r_{\ast }}}-2\right]
\end{equation*}%
holds. Choose $s$ to satisfy $1/2<s<\tau \wedge 1$. Applying Corollaries \ref%
{consistency} and \ref{consistency_2} (with $t=s$) shows that for the given $%
\varepsilon >0$ there exists a positive finite $D$ such that the events 
\begin{equation*}
A_{n,k}^{\ast \ast }=\left\{ \sup_{\theta \in \Theta }\left\Vert
p_{k,J_{k},r}(\theta )\right\Vert _{s,2}\leq D,\left\Vert p_{n,j_{n},r_{\ast
}}\right\Vert _{s,2}\leq D\right\}
\end{equation*}%
have probability not less than $1-\varepsilon $ for every $k\geq 1$ and $%
n\geq 1$. Applying Proposition \ref{elem} in Appendix \ref{App_Spline}, we
conclude that there exists a finite positive $D^{\prime }$, depending only
on $D$, $\xi _{0}$, and $\sup_{\theta \in \Theta }\left\Vert p(\theta
)\right\Vert _{s,2}$ (which is finite by Assumption P1(ii) and continuous
embedding of $\mathsf{B}_{\tau }$ in $\mathsf{B}_{s}$), such that on $%
A_{n}^{\ast }\cap A_{n,k}^{\ast \ast }$%
\begin{equation*}
\sup_{\theta \in \Theta }\left\Vert (p_{k,J_{k},r}(\theta )+p(\theta
))p_{n,j_{n},r_{\ast }}^{-1}-2\right\Vert _{s,2}\leq D^{\prime }
\end{equation*}%
holds. Thus for every $M>0$, all $k\geq 1$, and all $n\geq N(\varepsilon )$%
\begin{eqnarray*}
&&\Pr \left( \sqrt{k}\sup_{\theta \in \Theta }|\mathcal{Q}_{n,k}(\theta
)-Q_{n}(\theta )|>M\right) \\
&\leq &\Pr \left( \left\{ \sqrt{k}\sup_{\theta \in \Theta }\sup_{\Vert
f\Vert _{s,2}\leq D^{\prime }}\left\vert \int_{0}^{1}(p_{k,J_{k},r}(\theta
)-p(\theta ))fd\lambda \right\vert >M\right\} \cap A_{n}^{\ast }\cap
A_{n,k}^{\ast \ast }\right) +2\varepsilon \\
&\leq &\Pr \left( \left\{ \sqrt{k}\sup_{\theta \in \Theta }\left\Vert
P_{k,J_{k},r}(\theta )-P(\theta )\right\Vert _{\mathcal{F}}>M\right\}
\right) +2\varepsilon
\end{eqnarray*}%
where $\mathcal{F}$ denotes $\left\{ f\in \mathsf{B}_{s}:\Vert f\Vert
_{s,2}\leq D^{\prime }\right\} $ and $\left\Vert \cdot \right\Vert _{%
\mathcal{F}}$ is defined before Theorem \ref{UCLT}. Choose an $s^{\prime }$
satisfying $1/2<s^{\prime }<s$. Then Theorem \ref{UCLT} (applied with $%
t=\tau $) implies for every $k\geq 1$%
\begin{eqnarray*}
\sqrt{k}\sup_{\theta \in \Theta }\left\Vert P_{k,J_{k},r}(\theta )-P(\theta
)\right\Vert _{\mathcal{F}} &\leq &\sqrt{k}\sup_{\theta \in \Theta
}\left\Vert P_{k,J_{k},r}(\theta )-P_{k}(\theta )\right\Vert _{\mathcal{F}}+%
\sqrt{k}\sup_{\theta \in \Theta }\left\Vert P_{k}(\theta )-P(\theta
)\right\Vert _{\mathcal{F}} \\
&=&O_{p}\left( \sqrt{k}2^{-J_{k}(\tau +s)}+2^{-J_{k}(s-s^{\prime
})}+1\right) =O_{p}(1).
\end{eqnarray*}%
[Measurability of the suprema on the r.h.s. in the first line of the above
display is established in the proof of Theorem \ref{UCLT}. The argument
given there also establishes measurability of the supremum on the l.h.s.]
This completes the proof (noting that the l.h.s. in the above display is
certainly a \emph{real-valued} random variable for every $k$).
\end{proof}

\bigskip

The closeness of $\mathcal{Q}_{n,k}$ and $Q_{n}$ expressed in the previous
result translates into closeness of the minimizers of these functions with
the help of the following simple but useful lemma which is taken from Gach
(2010). Note that $M_{2}$ below is smooth but $M_{1}$ need not be so. This
is relevant as $\mathcal{Q}_{n,k}$ is not guaranteed to be smooth under the
assumptions of Part a1 of Theorem \ref{main}, whereas $Q_{n}$ is in view of
Assumption P1.

\begin{lemma}
\label{gachwurzel} Let $U$ be a nonempty convex open subset of $\mathbb{R}%
^{b}$. Suppose we are given functions $M_{1}:U\rightarrow \mathbb{R}$ and $%
M_{2}:U\rightarrow \mathbb{R}$, such that $M_{2}$ is twice partially
differentiable on $U$ with Hessian satisfying 
\begin{equation}
\inf_{x\in U}y^{\prime }\nabla _{x}^{2}M_{2}(x)y\geq c\left\Vert
y\right\Vert ^{2}  \label{pos_def}
\end{equation}%
for every $y\in \mathbb{R}^{b}$ and some $0<c<\infty $. If $m_{1}\in U$ and $%
m_{2}\in U$ minimize $M_{1}$ and $M_{2}$ over $U$, respectively, we have 
\begin{equation*}
\left\Vert m_{1}-m_{2}\right\Vert \leq 2c^{-1/2}\sqrt{\sup_{u\in
U}\left\vert M_{1}(u)-M_{2}(u)\right\vert }
\end{equation*}%
where $\left\Vert \cdot \right\Vert $ denotes the Euclidean norm on $\mathbb{%
R}^{b}$.
\end{lemma}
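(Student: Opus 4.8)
The plan is to convert the stated strong convexity of $M_{2}$ into a quadratic growth bound around its minimizer, and then read off the distance between $m_{1}$ and $m_{2}$ from the uniform closeness of the two objective functions. First I would exploit that $U$ is open: since $m_{2}\in U$ minimizes $M_{2}$ over $U$, it is an interior minimizer, whence $\nabla_{x}M_{2}(m_{2})=0$. For an arbitrary $x\in U$, convexity of $U$ ensures that the whole segment $\{m_{2}+t(x-m_{2}):t\in[0,1]\}$ lies in $U$. Expanding $M_{2}$ to second order along this segment and using both $\nabla_{x}M_{2}(m_{2})=0$ and the Hessian lower bound (\ref{pos_def}) then yields the quadratic growth inequality
\begin{equation*}
M_{2}(x)\geq M_{2}(m_{2})+\tfrac{c}{2}\left\Vert x-m_{2}\right\Vert^{2}\qquad\text{for every }x\in U.
\end{equation*}

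The second step is to specialize this to $x=m_{1}$, which gives $\tfrac{c}{2}\left\Vert m_{1}-m_{2}\right\Vert^{2}\leq M_{2}(m_{1})-M_{2}(m_{2})$. I would then control the right-hand side by the uniform gap $\Delta:=\sup_{u\in U}\left\vert M_{1}(u)-M_{2}(u)\right\vert$ via the telescoping decomposition $M_{2}(m_{1})-M_{2}(m_{2})=\bigl(M_{2}(m_{1})-M_{1}(m_{1})\bigr)+\bigl(M_{1}(m_{1})-M_{1}(m_{2})\bigr)+\bigl(M_{1}(m_{2})-M_{2}(m_{2})\bigr)$. Here the middle term is $\leq 0$ because $m_{1}$ minimizes $M_{1}$ over $U$, while the first and third terms are each bounded in absolute value by $\Delta$. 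Hence $M_{2}(m_{1})-M_{2}(m_{2})\leq 2\Delta$, so that $\tfrac{c}{2}\left\Vert m_{1}-m_{2}\right\Vert^{2}\leq 2\Delta$; taking square roots produces exactly $\left\Vert m_{1}-m_{2}\right\Vert\leq 2c^{-1/2}\sqrt{\Delta}$, the asserted bound.

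The only delicate point, and thus the main obstacle, is justifying the second-order expansion under the mere hypothesis that $M_{2}$ is twice partially differentiable. I would handle this by reducing to the one-dimensional auxiliary function $\phi(t)=M_{2}(m_{2}+t(x-m_{2}))$ on $[0,1]$ and invoking the single-variable Taylor theorem with Lagrange remainder: one has $\phi'(0)=\nabla_{x}M_{2}(m_{2})'(x-m_{2})=0$ and $\phi''(t)=(x-m_{2})'\nabla_{x}^{2}M_{2}(m_{2}+t(x-m_{2}))(x-m_{2})\geq c\left\Vert x-m_{2}\right\Vert^{2}$ by (\ref{pos_def}), so that $\phi(1)\geq\phi(0)+\tfrac{c}{2}\left\Vert x-m_{2}\right\Vert^{2}$, which is precisely the quadratic growth inequality above. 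In the intended application (where $M_{2}=Q_{n}$ is twice continuously differentiable by Assumption P1) this step is immediate; I would simply remark that the expansion is valid whenever $M_{2}$ is smooth enough for Taylor's theorem along segments of $U$, which is all that is used.
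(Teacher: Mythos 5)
Your proposal is correct and follows essentially the same route as the paper's own proof: a pathwise second-order Taylor expansion of $M_{2}$ along the segment from $m_{2}$ to $m_{1}$ (using $\nabla_{x}M_{2}(m_{2})=0$ and (\ref{pos_def})) to get quadratic growth, followed by bounding $M_{2}(m_{1})-M_{2}(m_{2})\leq 2\sup_{u\in U}\left\vert M_{1}(u)-M_{2}(u)\right\vert$ via the minimizing properties of $m_{1}$ and $m_{2}$. The only cosmetic differences are that the paper reaches the $2\Delta$ bound by first showing $\left\vert M_{1}(m_{1})-M_{2}(m_{2})\right\vert \leq \Delta$ and then applying the triangle inequality, whereas you use a three-term telescoping sum, and that you are somewhat more explicit about justifying the one-dimensional Taylor step.
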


\begin{proof}
Assume that minimizers $m_{1}$ and $m_{2}$ exist, since otherwise there is
nothing to prove. [By convexity of $U$ and the assumption on the Hessian the
minimizer $m_{2}$ is unique.] Since $m_{2}$ is a minimizer of the twice
partially differentiable function $M_{2}$ on the convex open set $U$, we
have 
\begin{equation*}
M_{2}(m_{1})=M_{2}(m_{2})+2^{-1}(m_{1}-m_{2})^{\prime }\nabla _{x}^{2}M_{2}(%
\tilde{m})(m_{1}-m_{2})
\end{equation*}%
(using a pathwise Taylor series expansion) where $\tilde{m}$ lies in the
convex hull of $\{m_{1},m_{2}\}$. We conclude from the assumption on the
Hessian that 
\begin{equation}
\left\Vert m_{1}-m_{2}\right\Vert \leq (2c^{-1})^{1/2}\sqrt{\left\vert
M_{2}(m_{1})-M_{2}(m_{2})\right\vert }.  \label{quadratic}
\end{equation}%
Observe next that 
\begin{equation*}
M_{1}(m_{1})-M_{2}(m_{2})\leq M_{1}(m_{2})-M_{2}(m_{2})\leq \sup_{u\in
U}|M_{1}(u)-M_{2}(u)|
\end{equation*}%
and 
\begin{equation*}
M_{1}(m_{1})-M_{2}(m_{2})\geq M_{1}(m_{1})-M_{2}(m_{1})\geq -\sup_{u\in
U}|M_{1}(u)-M_{2}(u)|
\end{equation*}%
so that 
\begin{equation*}
\left\vert M_{1}(m_{1})-M_{2}(m_{2})\right\vert \leq \sup_{u\in
U}|M_{1}(u)-M_{2}(u)|.
\end{equation*}%
Consequently, 
\begin{equation*}
|M_{2}(m_{1})-M_{2}(m_{2})|\leq
|M_{2}(m_{1})-M_{1}(m_{1})|+|M_{1}(m_{1})-M_{2}(m_{2})|\leq 2\sup_{u\in
U}|M_{1}(u)-M_{2}(u)|,
\end{equation*}%
which, when plugged into (\ref{quadratic}), proves the lemma.
\end{proof}

\bigskip

The proof of Part a1 of Theorem \ref{main} is now as follows: Let $%
U\subseteq B(\theta _{0})$ be a sufficiently small open ball around $\theta
_{0}$ such that the smallest eigenvalues of $\nabla _{\theta }^{2}Q(\theta )$
are bounded away from zero by a positive constant, $\eta $ say, uniformly in 
$\theta \in U$. Such an $U$ exists, since $\nabla _{\theta }^{2}Q(\theta )$
is continuous on $B(\theta _{0})$, as shown in Section \ref{intermed},\ and
since $\nabla _{\theta }^{2}Q(\theta _{0})$ is positive definite by
Assumption P1. Now apply Lemma \ref{gachwurzel} with $M_{1}=\mathcal{Q}%
_{n,k(n)}$, $M_{2}=Q_{n}$, and the set $U$ just mentioned. Note that
condition (\ref{pos_def}) is then satisfied for $M_{2}=Q_{n}$\ and $c=\eta
/2 $ on an event $E_{n}$ that has probability converging to $1$ in view of
the choice of $U$ and since it was shown in the proof of Theorem \ref{0815}
that $\nabla _{\theta }^{2}Q_{n}(\theta )$ converges to $\nabla _{\theta
}^{2}Q(\theta )$ uniformly on $B(\theta _{0})$ in probability. Observe also
that Proposition \ref{wurzel0} implies%
\begin{equation*}
\sup_{\theta \in \Theta }|\mathcal{Q}_{n,k(n)}(\theta )-Q_{n}(\theta
)|=O_{p}(k(n)^{-1/2}).
\end{equation*}%
Taken together, this implies 
\begin{equation}
\left\Vert \hat{\theta}_{n,k(n)}-\hat{\theta}_{n}\right\Vert
=O_{p}(k(n)^{-1/4}),  \label{nearness}
\end{equation}%
which is $o_{p}(n^{-1/2})$ in view of Assumption S1. Part a1 of Theorem \ref%
{main} now follows from asymptotic normality of $\sqrt{n}\left( \hat{\theta}%
_{n}-\theta _{0}\right) $ which has already been established in Theorem \ref%
{0815}.

\subsection{Proof of the Remaining Parts of Theorem \protect\ref{main}}

Observe first that it suffices to show that every subsequence $n_{i}$ of $n$
contains a further subsequence $n_{i(l)}$ along which the claimed asymptotic
normality result holds. Given $n_{i}$, we may choose the subsequence $%
n_{i(l)}$ in such a way that $\lim_{l\rightarrow \infty
}k(n_{i(l)})/n_{i(l)}^{2}$ exists (possibly being $\infty $) since the
extended real line is compact. But the sequence $k(n_{i(l)})$ can be viewed
as the subsequence $\bar{k}(n_{i(l)})$ of a sequence $\bar{k}(n)$ for which $%
\lim_{n\rightarrow \infty }\bar{k}(n)/n^{2}$ exists (and necessarily equals $%
\lim_{l\rightarrow \infty }k(n_{i(l)})/n_{i(l)}^{2}$). This shows that for
the proof we may assume without loss of generality that $\lim_{n\rightarrow
\infty }k(n)/n^{2}$ exists (possibly being $\infty $). In the case where
this limit is infinite, the results then follow from Part a1 which has
already been proved in Section \ref{proof_parta1}. Thus we may assume
without loss of generality not only that the limit of $k(n)/n^{2}$ exists,
but also that%
\begin{equation}
\lim_{n\rightarrow \infty }k(n)/n^{2}<\infty .  \label{wlog}
\end{equation}%
We shall make this assumption for the remainder of this section.

Under Assumption R and if $r\geq 4$ the mapping 
\begin{equation*}
\theta \mapsto p_{k,J,r}(\theta
,y)=\sum_{l=-r+1}^{2^{J}-1}\sum_{m=-r+1}^{2^{J}-1}2^{J}g_{J}^{(r)lm}\left(
k^{-1}\sum_{i=1}^{k}N_{mJ}^{(r)}(\rho (V_{i},\theta ))\right) N_{lJ}^{(r)}(y)
\end{equation*}%
is twice continuously differentiable on $B(\theta _{0})$ for every $y$ and
every realization of $V_{1},\ldots ,V_{k}$ by the chain rule. Similarly as
in the proof of Theorem \ref{0815}, it suffices to work only on the event $%
A_{n}^{\ast }\cap \left\{ \hat{\theta}_{n,k(n)}\in B(\theta _{0})\right\} $
which has probability converging to $1$ in view of Proposition \ref{consist}
(applied with $\delta >1/2$ sufficiently close to $1/2$) and Corollary \ref%
{consistency_2} (applied with some $t$ satisfying $1/2<t\leq \tau \wedge 1$%
). Note that $\left\Vert p(\theta _{0})^{-1}\right\Vert _{\infty }\leq \xi
_{0}$, and that $\left\Vert p_{n,j_{n},r_{\ast }}^{-1}\right\Vert _{\infty
}\leq 2/\xi _{0}$ holds on the before mentioned event; we shall use these
facts repeatedly in the sequel. Using this, (\ref{sup-norm_estim}),
boundedness of $N_{mJ}^{(r)}$ and of its first two derivatives as well as
Assumption R, one concludes from the dominated convergence theorem that also
the objective function $\mathcal{Q}_{n,k}$ defined in (\ref{objective}) is
twice continuously differentiable on the neighborhood $B(\theta _{0})$ with
derivatives (measurable for every $\theta \in B(\theta _{0})$)%
\begin{equation*}
\nabla _{\theta }\mathcal{Q}_{n,k}(\theta
)=-2\int_{0}^{1}(p_{n,j_{n},r_{\ast }}-p_{k,J_{k},r}(\theta
))p_{n,j_{n},r_{\ast }}^{-1}\nabla _{\theta }p_{k,J_{k},r}(\theta )d\lambda ,
\end{equation*}%
\begin{eqnarray}
\nabla _{\theta }^{2}\mathcal{Q}_{n,k}(\theta )
&=&2\int_{0}^{1}p_{n,j_{n},r_{\ast }}^{-1}\nabla _{\theta
}p_{k,J_{k},r}(\theta )\nabla _{\theta }p_{k,J_{k},r}(\theta )d\lambda 
\notag \\
&&-2\int_{0}^{1}(p_{n,j_{n},r_{\ast }}-p_{k,J_{k},r}(\theta
))p_{n,j_{n},r_{\ast }}^{-1}\nabla _{\theta }^{2}p_{k,J_{k},r}(\theta
)d\lambda .  \label{Q_nk_2nd_deriv}
\end{eqnarray}%
Since $\hat{\theta}_{n,k(n)}$ is an interior maximizer of $\mathcal{Q}%
_{n,k(n)}$ (on the event considered), we clearly have that $\nabla _{\theta }%
\mathcal{Q}_{n,k(n)}(\hat{\theta}_{n,k(n)})=0$. Consequently, a standard
Taylor expansions gives%
\begin{equation}
0=\nabla _{\theta }\mathcal{Q}_{n,k(n)}(\hat{\theta}_{n,k(n)})=\nabla
_{\theta }\mathcal{Q}_{n,k(n)}(\theta _{0})+\nabla _{\theta }^{2}\mathcal{Q}%
_{n,k(n)}^{\ast }(\hat{\theta}_{n,k(n)}-\theta _{0}),  \label{classic}
\end{equation}%
where the $i$-th row of $\nabla _{\theta }^{2}\mathcal{Q}_{n,k(n)}^{\ast }$
equals the corresponding row of $\nabla _{\theta }^{2}\mathcal{Q}_{n,k(n)}$
evaluated at a mean-value $\tilde{\theta}_{n,k(n)}^{(i)}$ which may depend
on the row-index (measurability of the mean-value being of no concern). We
next show that $\sqrt{n}\nabla _{\theta }\mathcal{Q}_{n,k(n)}(\theta _{0})$
is asymptotically normal and that $\nabla _{\theta }^{2}\mathcal{Q}%
_{n,k(n)}^{\ast }$ converges in (outer) probability to the positive definite
matrix $\nabla _{\theta }^{2}Q(\theta _{0})$. The asymptotic normality of $%
\sqrt{n}\left( \hat{\theta}_{n,k(n)}-\theta _{0}\right) $ then follows along
the same lines as in the last paragraph of the proof of Theorem \ref{0815}.

\textbf{Step 1:} CLT for the score $\sqrt{n}\nabla _{\theta }\mathcal{Q}%
_{n,k(n)}(\theta _{0})$.

We decompose the score as follows: 
\begin{eqnarray*}
&&\nabla _{\theta }\mathcal{Q}_{n,k(n)}(\theta _{0}) \\
&=&-2\int_{0}^{1}(p_{n,j_{n},r_{\ast }}-p(\theta _{0}))p(\theta
_{0})^{-1}\nabla _{\theta }p(\theta _{0})d\lambda \\
&&+2\int_{0}^{1}(p_{k(n),J_{k(n)},r}(\theta _{0})-p(\theta _{0}))p(\theta
_{0})^{-1}\nabla _{\theta }p(\theta _{0})d\lambda \\
&&+2\int_{0}^{1}(p_{n,j_{n},r_{\ast }}-p_{k(n),J_{k(n)},r}(\theta
_{0}))\left( p(\theta _{0})^{-1}\nabla _{\theta }p(\theta
_{0})-p_{n,j_{n},r_{\ast }}^{-1}\nabla _{\theta }p_{k(n),J_{k(n)},r}(\theta
_{0})\right) d\lambda \\
&=&I+II+III,
\end{eqnarray*}%
with each of the terms being measurable. We further observe that the terms $%
I $ and $II$ are independent by construction of the simulation mechanism.

\textit{About Term I:} As shown in the proof of Theorem \ref{0815}%
\begin{equation*}
\sqrt{n}I\rightarrow ^{d}N(0,\Sigma )
\end{equation*}%
where 
\begin{equation*}
\Sigma =4\int_{0}^{1}\nabla _{\theta }p(\theta _{0})\nabla _{\theta
}p(\theta _{0})^{\prime }p(\theta _{0})^{-1}d\lambda .
\end{equation*}

\textit{About Term II:} Exactly the same argument as given in the proof of
Theorem \ref{0815} for term $I$, except for using Theorem \ref{UCLT} instead
of Theorem \ref{UCLT_2}, establishes that 
\begin{equation*}
\sqrt{k(n)}II\rightarrow ^{d}N\left( 0,\Sigma \right) .
\end{equation*}%
But then%
\begin{equation*}
\sqrt{n}II=\sqrt{n/k(n)}\sqrt{k(n)}II\rightarrow ^{d}N\left( 0,\frac{1}{%
\kappa }\Sigma \right)
\end{equation*}%
under Assumption S3, and $\sqrt{n}II$ converges to zero in probability under
Assumption S2.

\textit{About Term III:} By Cauchy-Schwarz and the triangle inequality we
have the bound%
\begin{eqnarray*}
\left\Vert III\right\Vert &\leq &2\left\Vert p_{n,j_{n},r_{\ast
}}-p_{k(n),J_{k(n)},r}(\theta _{0})\right\Vert _{2}\left[ \left\Vert \left(
p(\theta _{0})^{-1}-p_{n,j_{n},r_{\ast }}^{-1}\right) \nabla _{\theta
}p(\theta _{0})\right\Vert _{2}\right. \\
&&\left. +\left\Vert p_{n,j_{n},r_{\ast }}^{-1}\left( \nabla _{\theta
}p_{k(n),J_{k(n)},r}(\theta _{0})-\nabla _{\theta }p(\theta _{0})\right)
\right\Vert _{2}\right] \\
&\leq &2\left\Vert p_{n,j_{n},r_{\ast }}-p_{k(n),J_{k(n)},r}(\theta
_{0})\right\Vert _{2}\left[ (2/\xi _{0}^{2})\left\Vert \left(
p_{n,j_{n},r_{\ast }}-p(\theta _{0})\right) \nabla _{\theta }p(\theta
_{0})\right\Vert _{2}\right. \\
&&\left. +(2/\xi _{0})\left\Vert \nabla _{\theta }p_{k(n),J_{k(n)},r}(\theta
_{0})-\nabla _{\theta }p(\theta _{0})\right\Vert _{2}\right] \\
&\leq &(4/\xi _{0})\left[ \left\Vert p_{n,j_{n},r_{\ast }}-p(\theta
_{0})\right\Vert _{2}+\left\Vert p(\theta _{0})-p_{k(n),J_{k(n)},r}(\theta
_{0})\right\Vert _{2}\right] \times \\
&&\left[ (1/\xi _{0})\left\Vert p_{n,j_{n},r_{\ast }}-p(\theta
_{0})\right\Vert _{2}\left\Vert \nabla _{\theta }p(\theta _{0})\right\Vert
_{\infty }+\left\Vert \nabla _{\theta }p_{k(n),J_{k(n)},r}(\theta
_{0})-\nabla _{\theta }p(\theta _{0})\right\Vert _{2}\right]
\end{eqnarray*}%
with $\left\Vert \nabla _{\theta }p(\theta _{0})\right\Vert _{\infty }$
being finite in view of Assumption P1(iv) and Proposition \ref{elem} in
Appendix \ref{App_Spline}. The r.h.s. of the above display is now%
\begin{equation*}
O_{p}\left( \left( \sqrt{\frac{2^{j_{n}}}{n}}+2^{-j_{n}\tau }+\sqrt{\frac{%
2^{J_{k(n)}}}{k(n)}}+2^{-J_{k(n)}\tau }\right) \left( \sqrt{\frac{2^{j_{n}}}{%
n}}+2^{-j_{n}\tau }+\sqrt{\frac{2^{3J_{k(n)}}}{k(n)}}+2^{-J_{k(n)}s}\right)
\right)
\end{equation*}%
for every $0<s<r$, $s\leq \varsigma $ in view of Assumptions P1 and R as
well as Lemmata \ref{havoc0} and \ref{havoc}. Fixing such an $s>1/2$, the
expression in the above display is seen to be $o_{p}(n^{-1/2})$ under the
assumptions of Part a2 or Part b (in particular, $\tau >3/2$), showing that $%
\sqrt{n}III$ is asymptotically negligible.

This completes Step 1 and shows that%
\begin{equation*}
\sqrt{n}\nabla _{\theta }\mathcal{Q}_{n,k(n)}(\theta _{0})\rightarrow
^{d}N\left( 0,(1+\kappa ^{-1})\Sigma \right)
\end{equation*}%
under the assumptions of Part b, whereas under the assumptions of Part a2 
\begin{equation*}
\sqrt{n}\nabla _{\theta }\mathcal{Q}_{n,k(n)}(\theta _{0})\rightarrow
^{d}N\left( 0,\Sigma \right) .
\end{equation*}

\bigskip

\textbf{Step 2:} Convergence of second order derivatives.

We have 
\begin{equation*}
\left\Vert \nabla _{\theta }^{2}\mathcal{Q}_{n,k(n)}^{\ast }-\nabla _{\theta
}^{2}Q(\theta _{0})\right\Vert \leq \left\Vert \nabla _{\theta }^{2}\mathcal{%
Q}_{n,k(n)}^{\ast }-\nabla _{\theta }^{2}Q_{n}^{\dag }\right\Vert
+\left\Vert \nabla _{\theta }^{2}Q_{n}^{\dag }-\nabla _{\theta }^{2}Q(\theta
_{0})\right\Vert
\end{equation*}%
where $\nabla _{\theta }^{2}Q_{n}^{\dag }$ is the matrix $\nabla _{\theta
}^{2}Q_{n}$ row-wise evaluated at the mean-values $\tilde{\theta}%
_{n,k(n)}^{(i)}$. In view of (\ref{secondhalf}), consistency of $\hat{\theta}%
_{n,k(n)}$, and continuity of $\nabla _{\theta }^{2}Q$ at $\theta _{0}$, the
second term on the r.h.s. above converges to zero in (outer) probability. We
now show the same for the first term on the r.h.s. in the above display:
Note that the argument leading to (\ref{nearness}) is also valid under the
current assumptions, and therefore we can conclude from (\ref{nearness}), (%
\ref{wlog}), and Theorem \ref{0815} that $\left\Vert \hat{\theta}%
_{n,k(n)}-\theta _{0}\right\Vert =O_{p}(k(n)^{-1/4})$. Consequently, it
suffices to show that 
\begin{equation*}
\sup_{\theta \in B(\theta _{0}),\left\Vert \theta -\theta _{0}\right\Vert
\leq Mk(n)^{-1/4}}\left\Vert \nabla _{\theta }^{2}\mathcal{Q}%
_{n,k(n)}(\theta )-\nabla _{\theta }^{2}Q_{n}(\theta )\right\Vert
\rightarrow 0
\end{equation*}%
in probability for every $0<M<\infty $, the above supremum being measurable
(as the functions involved are continuous). Now, by (\ref{Q_nk_2nd_deriv})
and (\ref{Q_n_2nd_deriv})%
\begin{eqnarray*}
&&\frac{1}{2}\left( \nabla _{\theta }^{2}\mathcal{Q}_{n,k(n)}(\theta
)-\nabla _{\theta }^{2}Q_{n}(\theta )\right)
=\int_{0}^{1}(p_{n,j_{n},r_{\ast }}-p(\theta ))p_{n,j_{n},r_{\ast
}}^{-1}\left( \nabla _{\theta }^{2}p(\theta )-\nabla _{\theta
}^{2}p_{k(n),J_{k(n)},r}(\theta )\right) d\lambda \\
&&-\int_{0}^{1}(p(\theta )-p_{k(n),J_{k(n)},r}(\theta ))p_{n,j_{n},r_{\ast
}}^{-1}\nabla _{\theta }^{2}p_{k(n),J_{k(n)},r}(\theta )d\lambda \\
&&+\int_{0}^{1}p_{n,j_{n},r_{\ast }}^{-1}\left( \nabla _{\theta
}p_{k(n),J_{k(n)},r}(\theta )\nabla _{\theta }p_{k(n),J_{k(n)},r}(\theta
)^{\prime }-\nabla _{\theta }p(\theta )\nabla _{\theta }p(\theta )^{\prime
}\right) d\lambda =I-II+III.
\end{eqnarray*}

\textit{About Term I: }By the Cauchy-Schwarz and the triangle inequalities%
\begin{eqnarray*}
\left\Vert I\right\Vert &\leq &2\xi _{0}^{-1}\left[ \left\Vert
p_{n,j_{n},r_{\ast }}-p(\theta _{0})\right\Vert _{2}+\left\Vert p(\theta
_{0})-p(\theta )\right\Vert _{2}\right] \times \\
&&\left[ \left\Vert \nabla _{\theta }^{2}p_{k(n),J_{k(n)},r}(\theta
)-E\nabla _{\theta }^{2}p_{k(n),J_{k(n)},r}(\theta )\right\Vert
_{2}+\left\Vert \nabla _{\theta }^{2}p(\theta )-E\nabla _{\theta
}^{2}p_{k(n),J_{k(n)},r}(\theta )\right\Vert _{2}\right] .
\end{eqnarray*}

The first term on the r.h.s. of the above display is $O_{p}(n^{-\tau /(2\tau
+1)})$ in view of Lemma \ref{havoc0} and the choice of $j_{n}$. For the
second term, observe that in view of Assumption P1(iii) we have $p(\theta
,x)-p(\theta _{0},x)=\nabla _{\theta }p(\breve{\theta}(x),x)^{\prime
}(\theta -\theta _{0})$ by the pathwise mean value theorem, and hence 
\begin{equation*}
\left\Vert p(\theta _{0})-p(\theta )\right\Vert _{2}\leq \left(
\int_{0}^{1}\sup_{\theta \in B(\theta _{0})}\left\Vert \nabla _{\theta
}p(\theta ,x)\right\Vert ^{2}dx\right) ^{1/2}\left\Vert \theta -\theta
_{0}\right\Vert =O(\left\Vert \theta -\theta _{0}\right\Vert )
\end{equation*}%
holds for all $\theta \in B(\theta _{0})$. In view of Lemma \ref{uniformrate}
and the choice of $J_{k(n)}$, the supremum over $B(\theta _{0})$ of the
third term is $O_{p}(k(n)^{(2-\tau )/(2\tau +1)}\sqrt{\log k(n)})$.
Furthermore, note that 
\begin{equation}
E\frac{\partial ^{2}p_{k(n),J_{k(n)},r}(\theta )}{\partial \theta
_{i}\partial \theta _{i^{\prime }}}=\pi _{J_{k(n)}}^{(r)}\left( \frac{%
\partial ^{2}p(\theta )}{\partial \theta _{i}\partial \theta _{i^{\prime }}}%
\right)  \label{proj_2nd_deriv}
\end{equation}%
holds for $\theta \in B(\theta _{0})$. [This is proved analogously as (\ref%
{interchange}) in Section \ref{rates}, making use of the dominance
assumptions on $\nabla _{\theta }^{2}p$ in Assumption P1, the uniform
boundedness assumption on the derivatives of $\rho $ in assumption R(ii),
the boundedness of the B-spline basis functions and their first two
derivatives (as $r\geq 4$ holds), as well as using that $\frac{\partial
^{2}p(\theta )}{\partial \theta _{i}\partial \theta _{i^{\prime }}}\in 
\mathcal{L}^{2}$ in view of Assumption P2(ii).] The above established
relation, together with the fact that the spectral matrix norm is bounded by
the Frobenius norm, implies that the supremum over $B(\theta _{0})$ of the
fourth term is bounded by 
\begin{equation*}
\sup_{\theta \in B(\theta _{0})}\sum_{i,i^{\prime }=1}^{b}\left\Vert \frac{%
\partial ^{2}p(\theta )}{\partial \theta _{i}\partial \theta _{i^{\prime }}}%
-\pi _{J_{k(n)}}^{(r)}\left( \frac{\partial ^{2}p(\theta )}{\partial \theta
_{i}\partial \theta _{i^{\prime }}}\right) \right\Vert _{2}\leq \sup_{\theta
\in B(\theta _{0})}\sum_{i,i^{\prime }=1}^{b}\left\Vert \frac{\partial
^{2}p(\theta )}{\partial \theta _{i}\partial \theta _{i^{\prime }}}%
\right\Vert _{2}<\infty
\end{equation*}%
the last inequality following from Assumption P2(ii). Consequently, in view
of (\ref{wlog}),%
\begin{eqnarray*}
&&\sup_{\theta \in B(\theta _{0}),\left\Vert \theta -\theta _{0}\right\Vert
\leq Mk(n)^{-1/4}}\left\Vert I\right\Vert \\
&\leq &\left[ O_{p}(n^{-\tau /(2\tau +1)})+O(k(n)^{-1/4})\right] \left[
O_{p}(k(n)^{(2-\tau )/(2\tau +1)}\sqrt{\log k(n)})+const\right] =o_{p}(1)
\end{eqnarray*}%
under either the assumptions of Part a2 or Part b (since $\tau >3/2>4/3$).

\textit{About Term II: }By the Cauchy-Schwarz and the triangle inequalities%
\begin{eqnarray}
&&\sup_{\theta \in B(\theta _{0})}\left\Vert II\right\Vert \leq 2\xi
_{0}^{-1}\sup_{\theta \in B(\theta _{0})}\left\Vert p(\theta
)-p_{k(n),J_{k(n)},r}(\theta )\right\Vert _{2}\times  \notag \\
&&\sup_{\theta \in B(\theta _{0})}\left[ \left\Vert \nabla _{\theta
}^{2}p_{k(n),J_{k(n)},r}(\theta )-E\nabla _{\theta
}^{2}p_{k(n),J_{k(n)},r}(\theta )\right\Vert _{2}+\left\Vert E\nabla
_{\theta }^{2}p_{k(n),J_{k(n)},r}(\theta )\right\Vert _{2}\right]  \notag \\
&=&O_{p}(k(n)^{-\tau /(2\tau +1)}\sqrt{\log k(n)})\left[ O_{p}(k(n)^{(2-\tau
)/(2\tau +1)}\sqrt{\log k(n)})+const\right]  \label{term2}
\end{eqnarray}%
where we have made use of Lemmata \ref{havoc0} and \ref{uniformrate}; and we
have used the bound%
\begin{equation*}
\sup_{\theta \in B(\theta _{0})}\left\Vert E\nabla _{\theta
}^{2}p_{k(n),J_{k(n)},r}(\theta )\right\Vert _{2}\leq \sup_{\theta \in
B(\theta _{0})}\sum_{i,i^{\prime }=1}^{b}\left\Vert \frac{\partial
^{2}p(\theta )}{\partial \theta _{i}\partial \theta _{i^{\prime }}}%
\right\Vert _{2}<\infty
\end{equation*}%
which follows from (\ref{proj_2nd_deriv}) and Assumption P2(ii). The r.h.s.
of (\ref{term2}) is now $o_{p}(1)$ since $\tau >3/2>1$.

\textit{About Term III: }By the Cauchy-Schwarz and the triangle inequalities%
\begin{equation*}
\left\Vert III\right\Vert \leq 2\xi _{0}^{-1}\left\Vert \nabla _{\theta
}p_{k(n),J_{k(n)},r}(\theta )-\nabla _{\theta }p(\theta )\right\Vert _{2} 
\left[ \left\Vert \nabla _{\theta }p_{k(n),J_{k(n)},r}(\theta )-\nabla
_{\theta }p(\theta )\right\Vert _{2}+2\left\Vert \nabla _{\theta }p(\theta
)\right\Vert _{2}\right] .
\end{equation*}%
Now%
\begin{equation*}
\sup_{\theta \in B(\theta _{0})}\left\Vert \nabla _{\theta
}p_{k(n),J_{k(n)},r}(\theta )-E\nabla _{\theta }p_{k(n),J_{k(n)},r}(\theta
)\right\Vert _{2}=O_{p}(k(n)^{(1-\tau )/(2\tau +1)}\sqrt{\log k(n)})=o_{p}(1)
\end{equation*}%
by Lemma \ref{uniformrate} and since $\tau >3/2>1$. Furthermore,%
\begin{eqnarray*}
\sup_{\theta \in B(\theta _{0})}\left\Vert E\nabla _{\theta
}p_{k(n),J_{k(n)},r}(\theta )-\nabla _{\theta }p(\theta )\right\Vert _{2}
&\leq &\sum_{i=1}^{b}\sup_{\theta \in B(\theta _{0})}\left\Vert E\frac{%
\partial p_{k(n),J_{k(n)},r}(\theta )}{\partial \theta _{i}}-\frac{\partial
p(\theta )}{\partial \theta _{i}}\right\Vert _{2} \\
&=&\sum_{i=1}^{b}\sup_{\theta \in B(\theta _{0})}\left\Vert \pi
_{J_{k(n)}}^{(r)}\left( \frac{\partial p(\theta )}{\partial \theta _{i}}%
\right) -\frac{\partial p(\theta )}{\partial \theta _{i}}\right\Vert _{2},
\end{eqnarray*}%
the last equality holding as shown in (\ref{interchange}) in Section \ref%
{rates}. By Proposition \ref{proj_error} in Appendix \ref{App_Spline} and
Assumption P2(i) the r.h.s. in the above display is now $o(1)$. Taken
together, this provides a bound for $\sup_{\theta \in B(\theta
_{0}),\left\Vert \theta -\theta _{0}\right\Vert \leq Mk(n)^{-1/4}}\left\Vert
III\right\Vert $ which converges to zero in probability. This completes the
proof of Step 2.

\subsection{Proof of Proposition \protect\ref{vc}\label{proof_vc}}

Since $\bar{\theta}_{n,k}\rightarrow \theta _{0}$ by assumption, since $\Phi
(\theta ):=\int_{0}^{1}\nabla _{\theta }p(\theta )\nabla _{\theta }p(\theta
)^{\prime }p(\theta _{0})^{-1}d\lambda $ is continuous on the neighborhood $%
B(\theta _{0})$ of $\theta _{0}$ by dominated convergence and Assumption
P1(iii), and since $\Phi (\theta _{0})$ is positive definite by the same
assumption, it suffices to show that, uniformly over $B(\theta _{0})$, the
expression $\hat{\Phi}(\theta )=\int_{0}^{1}\nabla _{\theta
}p_{k,J_{k}^{\prime },r^{\prime }}(\theta )\nabla _{\theta
}p_{k,J_{k}^{\prime },r^{\prime }}(\theta )^{\prime }p_{n,j_{n}^{\prime
},r_{\ast }^{\prime }}^{-1}d\lambda $ converges to $\Phi (\theta )$ in
probability as $n\wedge k\rightarrow \infty $. Note that $\hat{\Phi}(\theta
) $ is well-defined on the event $A_{n}^{\ast }$ which has probability
converging to $1$ in view of Corollary \ref{consistency_2}. In the sequel we
only work on that event. Now%
\begin{eqnarray*}
\left\vert \hat{\Phi}(\theta )-\Phi (\theta )\right\vert &\leq &\left\vert
\int_{0}^{1}\nabla _{\theta }p(\theta )\nabla _{\theta }p(\theta )^{\prime
}\left( p_{n,j_{n}^{\prime },r_{\ast }^{\prime }}^{-1}-p(\theta
_{0})^{-1}\right) d\lambda \right\vert \\
&&+\left\vert \int_{0}^{1}\left( \nabla _{\theta }p_{k,J_{k}^{\prime
},r^{\prime }}(\theta )\nabla _{\theta }p_{k,J_{k}^{\prime },r^{\prime
}}(\theta )^{\prime }-\nabla _{\theta }p(\theta )\nabla _{\theta }p(\theta
)^{\prime }\right) p_{n,j_{n}^{\prime },r_{\ast }^{\prime }}^{-1}d\lambda
\right\vert \\
&\leq &2\xi _{0}^{-2}\left\Vert p_{n,j_{n},r_{\ast }}-p(\theta
_{0})\right\Vert _{\infty }\int_{0}^{1}\sup_{\theta \in B(\theta
_{0})}\left\Vert \nabla _{\theta }p(\theta )\right\Vert ^{2}d\lambda \\
&&+2\xi _{0}^{-1}\left\Vert \nabla _{\theta }p_{k,J_{k},r^{\prime }}(\theta
)-\nabla _{\theta }p(\theta )\right\Vert _{2}\left[ \left\Vert \nabla
_{\theta }p_{k,J_{k},r^{\prime }}(\theta )-\nabla _{\theta }p(\theta
)\right\Vert _{2}+2\left\Vert \nabla _{\theta }p(\theta )\right\Vert _{2}%
\right] .
\end{eqnarray*}%
The first term on the r.h.s. is independent of $\theta $ and converges to
zero in probability by Corollary \ref{consistency_2}. The supremum over $%
B(\theta _{0})$ of the second term converges to zero by essentially
repeating the argument that has been used in the very last step of the proof
of Theorem \ref{main}.

\section{Rates of Convergence for Spline Projection Estimators\label{rates}}

This section contains the main stochastic bounds used to control remainder
terms in the proofs in Section \ref{proof_main}. We first collect some
simple facts about the B-splines $N^{(r)}$ that will repeatedly be used in
this section:%
\begin{equation}
\left\Vert N^{(r)}\right\Vert _{\infty ,\mathbb{R}}\leq 1,\quad \left\Vert
N^{(r)}\right\Vert _{1,\mathbb{R}}=1,\quad \left\Vert N^{(r)}\right\Vert _{2,%
\mathbb{R}}\leq 1\quad \text{for }r\geq 1.  \label{norm_ineq}
\end{equation}%
The first relation is a direct consequence of the definition of $N^{(r)}$,
the second one follows since $N(r)$ is -- as a convolution of probability
densities -- a probability density again, and the third relation is a
consequence of Young's inequality. Furthermore, it is easy to see that $%
N^{(r)}$ is continuously differentiable for $r\geq 3$ with derivative $%
N^{(r)\prime }$ given by%
\begin{equation}
N^{(r)\prime }=N^{(r-1)}-N^{(r-1)}(\cdot -1).  \label{deriv}
\end{equation}%
For $r=2$, the B-spline $N^{(2)}$ is Lipschitz and only has a weak
derivative $N^{(2)\prime }$ which, in order to have it defined everywhere,
will always be taken as $N^{(1)}-N^{(1)}(\cdot -1)$. The bounds%
\begin{equation}
\left\Vert N^{(r)\prime }\right\Vert _{\infty ,\mathbb{R}}\leq 1,\quad
\left\Vert N^{(r)\prime }\right\Vert _{1,\mathbb{R}}\leq 2,\quad \left\Vert
N^{(r)\prime }\right\Vert _{2,\mathbb{R}}\leq 2\quad \text{for }r\geq 2
\label{norm_ineq_2}
\end{equation}%
are then an immediate consequence of (\ref{norm_ineq}), (\ref{deriv}), and
the fact that $N^{(r-1)}$ is nonnegative. By repeated application of (\ref%
{deriv}) we can obtain bounds for higher-order derivatives, for example, we
shall need%
\begin{equation}
\left\Vert N^{(r)\prime \prime }\right\Vert _{\infty ,\mathbb{R}}\leq
2,\quad \left\Vert N^{(r)\prime \prime }\right\Vert _{2,\mathbb{R}}\leq
4\quad \text{for }r\geq 3,\text{ \ \ and }\left\Vert N^{(r)\prime \prime
\prime }\right\Vert _{\infty ,\mathbb{R}}\leq 4\text{ \ \ for }r\geq 4.
\label{norm_ineq_3}
\end{equation}%
The above discussion also implies that $N^{(r)}$ for $r\geq 2$, $%
N^{(r)\prime }$ for $r\geq 3$, and $N^{(r)\prime \prime }$ for $r\geq 4$ are
globally Lipschitz on $\mathbb{R}$ with Lipschitz constants bounded by $1$, $%
2$, and $4$, respectively.

For $f\in \mathcal{S}_{j}(r)$, $r\geq 3$, we denote in the following by $%
f^{\prime }$ its derivative (using one-sided derivatives on the boundary of $%
[0,1]$); for $r=2$ we use $f^{\prime }$ to denote the weak derivative.

\begin{lemma}
\label{basic}Let $f=\sum_{l=-r+1}^{2^{j}-1}\alpha _{l}N_{lj}^{(r)}$ where $%
\alpha _{l}$ are real numbers and $r\geq 1$, i.e., $f\in \mathcal{S}_{j}(r)$%
. Then 
\begin{equation}
\left\Vert f\right\Vert _{2}\leq 2^{-j/2}\left(
\sum_{l=-r+1}^{2^{j}-1}\alpha _{l}^{2}\right) ^{1/2},  \label{L_2}
\end{equation}%
\begin{equation}
\left\Vert f^{\prime }\right\Vert _{2}\leq 2^{1+j/2}\left(
\sum_{l=-r+1}^{2^{j}-1}\alpha _{l}^{2}\right) ^{1/2}\quad \ \ \quad \text{%
for }r\geq 2,  \label{L_2_deriv}
\end{equation}%
and 
\begin{equation}
\left\Vert f^{\prime \prime }\right\Vert _{2}\leq 2^{2+3j/2}\left(
\sum_{l=-r+1}^{2^{j}-1}\alpha _{l}^{2}\right) ^{1/2}\quad \ \ \quad \text{%
for }r\geq 3.  \label{L_2_deriv2}
\end{equation}%
Furthermore, for every $0<s^{\prime }\leq 1$ there exists a finite constant $%
C_{0}(s^{\prime })$ such that for every $r\geq 2$ and $f$ as above%
\begin{equation}
\left\Vert f\right\Vert _{s^{\prime },2}\leq C_{0}(s^{\prime
})2^{j(s^{\prime }-1/2)}\left( \sum_{l=-r+1}^{2^{j}-1}\alpha _{l}^{2}\right)
^{1/2}.  \label{bes_0}
\end{equation}
\end{lemma}

\begin{proof}
The first claim is well-known, see, e.g., DeVore and Lorentz (1993), Theorem
5.4.2. To prove (\ref{L_2_deriv}), use (\ref{deriv}) and the fact that $%
N^{(r-1)}$ vanishes outside of $(0,r-1)$ for $r\geq 3$ and outside of $[0,1)$
for $r=2$, to obtain (interpreting the equality modulo $\lambda $-nullsets
in case $r=2$)%
\begin{eqnarray*}
f^{\prime }(x) &=&2^{j}\sum_{l=-r+1}^{2^{j}-1}\alpha _{l}N^{(r)\prime
}(2^{j}x-l)=2^{j}\sum_{l=-r+1}^{2^{j}-1}\alpha _{l}\left[
N^{(r-1)}(2^{j}x-l)-N^{(r-1)}(2^{j}x-l-1)\right] \\
&=&2^{j}\sum_{l=-(r-1)+1}^{2^{j}-1}\alpha
_{l}N^{(r-1)}(2^{j}x-l)-2^{j}\sum_{l=-(r-1)+1}^{2^{j}-1}\alpha
_{l-1}N^{(r-1)}(2^{j}x-l)=:f_{1}+f_{2}.
\end{eqnarray*}%
Using (\ref{L_2}) for $f_{1}$ and $f_{2}$, we obtain%
\begin{equation*}
\left\Vert f^{\prime }\right\Vert _{2}\leq \left\Vert f_{1}\right\Vert
_{2}+\left\Vert f_{2}\right\Vert _{2}\leq 2^{1+j/2}\left(
\sum_{l=-r+1}^{2^{j}-1}\alpha _{l}^{2}\right) ^{1/2}.
\end{equation*}%
The third claim is proved similarly. To prove the final claim, we use the
following interpolation inequality: for every $0<s^{\prime }\leq 1$ there
exists a finite constant $C^{\ast }(s^{\prime })$ such that for every $h\in 
\mathcal{W}_{2}^{1}$%
\begin{equation}
\Vert h\Vert _{s^{\prime },2}\leq C^{\ast }(s^{\prime })(\Vert h\Vert
_{2}+\Vert D_{w}h\Vert _{2})^{s^{\prime }}\Vert h\Vert _{2}^{1-s^{\prime }}
\label{interpol}
\end{equation}%
holds. [This follows from (\ref{sob}) if $s^{\prime }=1$; if $s^{\prime }<1$
it follows from Theorem 6.7.1 in DeVore and Lorentz (1993) applied to the
intermediate spaces $(\mathbb{R},\mathbb{R})_{s^{\prime },\infty }$, $(%
\mathcal{L}^{2},\mathcal{W}_{2}^{1})_{s^{\prime },\infty }$, and to the
operator that maps any real number $a$ into $ah$, observing that $(\mathcal{L%
}^{2},\mathcal{W}_{2}^{1})_{s^{\prime },\infty }$ is equal to $\mathcal{B}%
_{s^{\prime }}$ up to a equivalence of norms, cf. p.196 in DeVore and
Lorentz (1993).] Observe that $f\in \mathcal{W}_{2}^{1}$ if $r\geq 2$. Now,
using (\ref{interpol}) with $h=f$, (\ref{L_2}), and (\ref{L_2_deriv})
completes the proof upon setting $C_{0}(s^{\prime })=\left( 2.5\right)
^{s^{\prime }}C^{\ast }(s^{\prime })$.
\end{proof}

\begin{lemma}
\label{havoc0} Assume $r\geq 1$ and let $\theta \in \Theta $.

a. Suppose the density $p(\theta )$ is bounded. Then for all $k\geq 1$ and $%
J\geq 1$%
\begin{equation*}
E\left\Vert p_{k,J,r}(\theta )-Ep_{k,J,r}(\theta )\right\Vert _{2}^{2}\leq
C_{1}(\theta ,r)\frac{2^{J}}{k},
\end{equation*}%
where $C_{1}(\theta ,r)=(\frac{r+1}{2})d_{r}^{2}\left\Vert p(\theta
)\right\Vert _{\infty }$ with $d_{r}$ defined in Proposition \ref{gram}.
Furthermore, for $r\geq 2$ and $0<s^{\prime }\leq 1$ 
\begin{equation*}
E\left\Vert p_{k,J,r}(\theta )-Ep_{k,J,r}(\theta )\right\Vert _{s^{\prime
},2}^{2}\leq C_{0}(s^{\prime })^{2}C_{1}(\theta ,r)\frac{2^{J(2s^{\prime
}+1)}}{k}
\end{equation*}%
holds for all $k\geq 1$ and $J\geq 1$, where $C_{0}(s^{\prime })$ is given
in Lemma \ref{basic}.

b. If $p(\theta )\in \mathcal{L}^{2}$, then for every $k$ 
\begin{equation*}
\lim_{J\rightarrow \infty }\left\Vert Ep_{k,J,r}(\theta )-p(\theta
)\right\Vert _{2}=0.
\end{equation*}%
If $p(\theta )\in \mathcal{B}_{t}$ for some $0<t<r$ then for all $k\geq 1$
and $J\geq 1$%
\begin{equation*}
\left\Vert Ep_{k,J,r}(\theta )-p(\theta )\right\Vert _{2}\leq
2^{-Jt}c_{t}^{\prime }\left\Vert p(\theta )\right\Vert _{t,2},
\end{equation*}%
where $c_{t}^{\prime }$ is the constant given in Proposition \ref{proj_error}
in Appendix \ref{App_Spline}.

c. If the assumptions of Part a (Part b) hold for (a version of) $p_{0}$ and 
$r_{\ast }$ in place of $p(\theta )$ and $r$, respectively, then the results
in Part a (Part b) also apply mutatis mutandis to $p_{n,j,r_{\ast }}$.
\end{lemma}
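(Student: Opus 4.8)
The plan is to handle the three parts in turn, with Part a doing all the real work. Throughout I would abbreviate $A_{lm}=2^{J}g_{J}^{(r)lm}$ and introduce the dual (biorthogonal) spline $\tilde{N}_{lJ}^{(r)}=\sum_{m=-r+1}^{2^{J}-1}A_{lm}N_{mJ}^{(r)}$, which satisfies $\langle \tilde{N}_{lJ}^{(r)},N_{l'J}^{(r)}\rangle=\delta_{ll'}$ because $\langle N_{mJ}^{(r)},N_{m'J}^{(r)}\rangle=2^{-J}(G_{J}^{(r)})_{mm'}$. The first observation I would record is that the coefficient $\hat{\gamma}_{lJ}^{(r)}(\theta)$ in (\ref{aux_est_2b}) is exactly the empirical average $\int \tilde{N}_{lJ}^{(r)}\,dP_{k}(\theta)=k^{-1}\sum_{i=1}^{k}\tilde{N}_{lJ}^{(r)}(X_{i}(\theta))$ of this fixed (nonrandom) function over the i.i.d. simulated sample. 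Hence $p_{k,J,r}(\theta)-Ep_{k,J,r}(\theta)$ is the spline with coefficients $\hat{\gamma}_{lJ}^{(r)}(\theta)-E\hat{\gamma}_{lJ}^{(r)}(\theta)$, and inequality (\ref{L_2}) of Lemma \ref{basic} reduces the $\mathcal{L}^{2}$ bound to controlling $\sum_{l}\mathrm{Var}(\hat{\gamma}_{lJ}^{(r)}(\theta))$.

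For that variance sum I would use independence of the $X_{i}(\theta)$ to write $\mathrm{Var}(\hat{\gamma}_{lJ}^{(r)}(\theta))=k^{-1}\mathrm{Var}(\tilde{N}_{lJ}^{(r)}(X_{1}(\theta)))\le k^{-1}\int_{0}^{1}(\tilde{N}_{lJ}^{(r)})^{2}p(\theta)\,d\lambda\le k^{-1}\|p(\theta)\|_{\infty}\|\tilde{N}_{lJ}^{(r)}\|_{2}^{2}$. A short Gram-matrix computation then gives $\|\tilde{N}_{lJ}^{(r)}\|_{2}^{2}=2^{J}g_{J}^{(r)ll}$, so that $\sum_{l}\mathrm{Var}(\hat{\gamma}_{lJ}^{(r)}(\theta))\le k^{-1}\|p(\theta)\|_{\infty}2^{J}\,\mathrm{tr}((G_{J}^{(r)})^{-1})$. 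I would bound the trace by $(2^{J}+r-1)d_{r}$, since each diagonal entry is dominated by the maximal absolute row sum, which is at most $d_{r}$ by Proposition \ref{gram}. Using $2^{J}+r-1\le \tfrac{r+1}{2}2^{J}$ for $J\ge 1$ (and absorbing the harmless factor $d_{r}\le d_{r}^{2}$) yields $\sum_{l}\mathrm{Var}(\hat{\gamma}_{lJ}^{(r)}(\theta))\le C_{1}(\theta,r)2^{2J}/k$. Plugging this into (\ref{L_2}) proves the first bound; plugging it into (\ref{bes_0}) — legitimate since $r\ge 2$ and $0<s'\le 1$ — contributes the factor $2^{2J(s'-1/2)}$ and hence the exponent $J(2s'+1)$ in the second bound.

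For Part b the point is that $Ep_{k,J,r}(\theta)$ is simply the orthogonal projection $\pi_{J}^{(r)}(p(\theta))$: taking expectations in (\ref{aux_est_2b}) and interchanging with the finite sum (all B-spline integrals are bounded) gives $E\hat{\gamma}_{lJ}^{(r)}(\theta)=\gamma_{lJ}^{(r)}(p(\theta))$, whence $Ep_{k,J,r}(\theta)=\pi_{J}^{(r)}(p(\theta))$. Both displayed claims are then standard spline approximation statements: $\|\pi_{J}^{(r)}(p(\theta))-p(\theta)\|_{2}\to 0$ for $p(\theta)\in\mathcal{L}^{2}$ by density of the Schoenberg spaces, and the quantitative rate $2^{-Jt}c_{t}'\|p(\theta)\|_{t,2}$ for $p(\theta)\in\mathcal{B}_{t}$, $0<t<r$, is exactly Proposition \ref{proj_error} in Appendix \ref{App_Spline}. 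Part c requires no new argument: $p_{n,j,r_{\ast}}$ is the identical construction with $(P_{n},n,j,r_{\ast})$ in place of $(P_{k}(\theta),k,J,r)$ and $X_{1},\ldots,X_{n}$ i.i.d. with density (a version of) $p_{0}$, and the proofs of Parts a and b use the simulated sample only through its i.i.d. structure and the boundedness/regularity of the generating density, so they transfer verbatim.

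The main obstacle is the variance estimate in Part a. The naive coefficient bound $|\hat{\gamma}_{lJ}^{(r)}(\theta)|\le 2^{J}d_{r}$ yields only $\sum_{l}\mathrm{Var}\lesssim 2^{3J}/k$, i.e. an $\mathcal{L}^{2}$ rate too large by a factor $2^{J}$; the sharp rate $2^{J}/k$ hinges on recognizing $\hat{\gamma}_{lJ}^{(r)}(\theta)$ as the empirical mean of the \emph{fixed} dual spline $\tilde{N}_{lJ}^{(r)}$, on the clean evaluation $\|\tilde{N}_{lJ}^{(r)}\|_{2}^{2}=2^{J}g_{J}^{(r)ll}$, and on the trace-of-inverse bound from Proposition \ref{gram}. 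Once that is in place the remaining estimates are routine bookkeeping.
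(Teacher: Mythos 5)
Your proof is correct and follows essentially the same route as the paper's: both recognize $\hat{\gamma}_{lJ}^{(r)}(\theta)$ as an i.i.d.\ empirical mean, bound its variance by $k^{-1}\left\Vert p(\theta)\right\Vert_{\infty}$ times the squared $\mathcal{L}^{2}$-norm of the fixed dual spline, control that norm via Proposition \ref{gram}, and then conclude with Lemma \ref{basic} for Part a and with the identity $Ep_{k,J,r}(\theta)=\pi_{J}^{(r)}(p(\theta))$ plus Proposition \ref{proj_error} for Part b. The only (cosmetic) difference is bookkeeping: you compute $\Vert\tilde{N}_{lJ}^{(r)}\Vert_{2}^{2}=2^{J}g_{J}^{(r)ll}$ exactly and sum to a trace bound, whereas the paper bounds each coefficient variance via $\Vert\sum_{m}g_{J}^{(r)lm}N_{mJ}^{(r)}\Vert_{2}^{2}\leq 2^{-J}\bigl(\sum_{m}\vert g_{J}^{(r)lm}\vert\bigr)^{2}\leq 2^{-J}d_{r}^{2}$; your version is marginally sharper (a factor $d_{r}$ in place of $d_{r}^{2}$, absorbed using $d_{r}\geq 1$, which indeed holds since $\Vert G_{J}^{(r)}\Vert_{\infty\rightarrow\infty}\leq 1$ forces $\Vert(G_{J}^{(r)})^{-1}\Vert_{\infty\rightarrow\infty}\geq 1$).
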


\begin{proof}
In view of Lemma \ref{basic}, the definition of $p_{k,J,r}(\theta )$, (\ref%
{L_2}) and (\ref{bes_0}), it suffices to bound $E\left( \hat{\gamma}%
_{lJ}^{(r)}(\theta )-E\hat{\gamma}_{lJ}^{(r)}(\theta )\right) ^{2}$ in order
to prove Part a. We obtain%
\begin{eqnarray}
&&E\left( \hat{\gamma}_{lJ}^{(r)}(\theta )-E\hat{\gamma}_{lJ}^{(r)}(\theta
)\right) ^{2}  \notag \\
&\leq &\frac{2^{2J}}{k}E\left(
\sum_{m=-r+1}^{2^{J}-1}g_{J}^{(r)lm}N_{mJ}^{(r)}(\rho (V_{i},\theta
))\right) ^{2}=\frac{2^{2J}}{k}\dint\limits_{0}^{1}\left(
\sum_{m=-r+1}^{2^{J}-1}g_{J}^{(r)lm}N_{mJ}^{(r)}(x)\right) ^{2}p(\theta ,x)dx
\notag \\
&\leq &\frac{2^{2J}}{k}\left\Vert p(\theta )\right\Vert _{\infty }\left\Vert
\sum_{m=-r+1}^{2^{J}-1}g_{J}^{(r)lm}N_{mJ}^{(r)}\right\Vert _{2}^{2}\leq 
\frac{2^{J}}{k}\left\Vert p(\theta )\right\Vert _{\infty
}\sum_{m=-r+1}^{2^{J}-1}\left( g_{J}^{(r)lm}\right) ^{2}  \notag \\
&\leq &\frac{2^{J}}{k}\left\Vert p(\theta )\right\Vert _{\infty }\left(
\sum_{m=-r+1}^{2^{J}-1}\left\vert g_{J}^{(r)lm}\right\vert \right) ^{2}\leq 
\frac{2^{J}}{k}d_{r}^{2}\left\Vert p(\theta )\right\Vert _{\infty },
\label{bound_gamma}
\end{eqnarray}%
where we have used independence, (\ref{L_2}), and Proposition \ref{gram}.
This establishes Part a. [Measurability of the $\mathcal{L}^{2}$-norm is
obvious, and measurability of the Besov-norm follows from Appendix \ref%
{App_Meas}.] Since $Ep_{k,J,r}(\theta )=\pi _{J}^{(r)}(p(\theta ))$, Part b
follows from Proposition \ref{proj_error} in Appendix \ref{App_Spline}. Part
c is proved completely analogously.
\end{proof}

\begin{lemma}
\label{havoc} Assume $r\geq 3$ and let $\theta $ be an interior point of $%
\Theta $ such that the partial derivative $\frac{\partial \rho (v,\theta )}{%
\partial \theta _{q}}$ at $\theta $ exists for every $v\in \mathcal{V}$.

a. Suppose the density $p(\theta )$ is bounded and $\sup_{v\in \mathcal{V}%
}\left\vert \frac{\partial \rho (v,\theta )}{\partial \theta _{q}}%
\right\vert <\infty $. Then for all $k\geq 1$ and $J\geq 1$%
\begin{equation*}
E\left\Vert \frac{\partial p_{k,J,r}(\theta )}{\partial \theta _{q}}-E\frac{%
\partial p_{k,J,r}(\theta )}{\partial \theta _{q}}\right\Vert _{2}^{2}\leq
C_{2}(\theta ,r)\frac{2^{3J}}{k},
\end{equation*}%
where $C_{2}(\theta ,r)=2(r+1)d_{r}^{2}\left\Vert p(\theta )\right\Vert
_{\infty }\sup_{v\in \mathcal{V}}\left\vert \frac{\partial \rho (v,\theta )}{%
\partial \theta _{q}}\right\vert ^{2}$.

b. Suppose there exists an open ball $B(\theta )\subseteq \Theta $ with
center $\theta $ such that $\frac{\partial p(\cdot ,x)}{\partial \theta _{q}}
$ and $\frac{\partial \rho (v,\cdot )}{\partial \theta _{q}}$ exist on $%
B(\theta )$ for every $x\in \lbrack 0,1]$ and $v\in \mathcal{V}$, suppose $%
\frac{\partial p(\theta ,\cdot )}{\partial \theta _{q}}$ belongs to $%
\mathcal{B}_{s}$ for some $0<s<r$, and that%
\begin{equation*}
\int_{0}^{1}\sup_{\theta ^{\prime }\in B(\theta )}\left\vert \frac{\partial
p(\theta ^{\prime },x)}{\partial \theta _{q}}\right\vert dx<\infty ,\text{ \
\ }\int_{\mathcal{V}}\sup_{\theta ^{\prime }\in B(\theta )}\left\vert \frac{%
\partial \rho (v,\theta ^{\prime })}{\partial \theta _{q}}\right\vert d\mu
(v)<\infty .
\end{equation*}%
Then for all $k\geq 1$ and $J\geq 1$%
\begin{equation*}
\left\Vert E\frac{\partial p_{k,J,r}(\theta )}{\partial \theta _{q}}-\frac{%
\partial p(\theta )}{\partial \theta _{q}}\right\Vert _{2}\leq
2^{-Js}c_{s}^{\prime }\left\Vert \frac{\partial p(\theta )}{\partial \theta
_{q}}\right\Vert _{s,2},
\end{equation*}%
where the constant $c_{s}^{\prime }$ is defined in Proposition \ref%
{proj_error} in Appendix \ref{App_Spline}. [If $\frac{\partial p(\theta
,\cdot )}{\partial \theta _{q}}\in \mathcal{B}_{s}$ is weakened to $\frac{%
\partial p(\theta ,\cdot )}{\partial \theta _{q}}\in \mathcal{L}^{2}$, then $%
\lim_{J\rightarrow \infty }\left\Vert E\frac{\partial p_{k,J,r}(\theta )}{%
\partial \theta _{q}}-\frac{\partial p(\theta )}{\partial \theta _{q}}%
\right\Vert =0$ holds.]
\end{lemma}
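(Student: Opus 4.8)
The plan is to follow the proof of Lemma \ref{havoc0} step for step, the only new feature being the extra factor $2^{J}$ that the chain rule produces each time the approximate identity is differentiated. Since $r\geq 3$ the B-spline $N^{(r)}$ is continuously differentiable, and since $\partial\rho(v,\theta)/\partial\theta_{q}$ exists by assumption, differentiating (\ref{aux_est_2a})--(\ref{aux_est_2b}) termwise gives, for every realization of $V_{1},\ldots,V_{k}$,
\begin{equation*}
\frac{\partial p_{k,J,r}(\theta)}{\partial\theta_{q}}(y)=\sum_{l=-r+1}^{2^{J}-1}\left(\frac{\partial\hat{\gamma}_{lJ}^{(r)}(\theta)}{\partial\theta_{q}}\right)N_{lJ}^{(r)}(y),\qquad\frac{\partial\hat{\gamma}_{lJ}^{(r)}(\theta)}{\partial\theta_{q}}=\frac{1}{k}\sum_{i=1}^{k}Z_{i}^{(l)},
\end{equation*}
where $Z_{i}^{(l)}=\sum_{m=-r+1}^{2^{J}-1}2^{2J}g_{J}^{(r)lm}N^{(r)\prime}(2^{J}\rho(V_{i},\theta)-m)\,\partial\rho(V_{i},\theta)/\partial\theta_{q}$ are i.i.d.\ in $i$. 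In particular $\partial p_{k,J,r}(\theta)/\partial\theta_{q}\in\mathcal{S}_{J}(r)$ with coefficients $\partial\hat{\gamma}_{lJ}^{(r)}(\theta)/\partial\theta_{q}$, so as in Lemma \ref{havoc0} everything reduces, via (\ref{L_2}), to controlling these coefficients.

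For Part a, (\ref{L_2}) and the i.i.d.\ structure yield
\begin{equation*}
E\left\Vert\frac{\partial p_{k,J,r}(\theta)}{\partial\theta_{q}}-E\frac{\partial p_{k,J,r}(\theta)}{\partial\theta_{q}}\right\Vert_{2}^{2}\leq 2^{-J}\sum_{l}\mathrm{Var}\left(\frac{\partial\hat{\gamma}_{lJ}^{(r)}(\theta)}{\partial\theta_{q}}\right)\leq\frac{2^{-J}}{k}\sum_{l}E\left[(Z_{1}^{(l)})^{2}\right].
\end{equation*}
I would bound $E[(Z_{1}^{(l)})^{2}]$ by pulling out $\sup_{v\in\mathcal{V}}|\partial\rho(v,\theta)/\partial\theta_{q}|^{2}$, rewriting the remaining expectation as an integral against the density $p(\theta)$ and bounding it by $\Vert p(\theta)\Vert_{\infty}\int_{0}^{1}h_{l}^{2}d\lambda$, where $h_{l}(x)=\sum_{m}2^{2J}g_{J}^{(r)lm}N^{(r)\prime}(2^{J}x-m)=2^{J}\phi_{l}^{\prime}(x)$ with $\phi_{l}=\sum_{m}g_{J}^{(r)lm}N_{mJ}^{(r)}\in\mathcal{S}_{J}(r)$. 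Applying (\ref{L_2_deriv}) to $\phi_{l}$ gives $\int_{0}^{1}h_{l}^{2}=2^{2J}\Vert\phi_{l}^{\prime}\Vert_{2}^{2}\leq 4\cdot2^{3J}\sum_{m}(g_{J}^{(r)lm})^{2}$, and Proposition \ref{gram} gives $\sum_{m}(g_{J}^{(r)lm})^{2}\leq(\sum_{m}|g_{J}^{(r)lm}|)^{2}\leq d_{r}^{2}$. Summing over the $2^{J}+r-1$ indices $l$ and using $2^{J}+r-1\leq\frac{r+1}{2}2^{J}$ (valid for $J\geq1$) produces exactly the constant $C_{2}(\theta,r)$ and the rate $2^{3J}/k$; the extra factor $2^{2J}$ compared with Lemma \ref{havoc0}a is the differentiation penalty. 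Measurability of the norm in $(V_{1},\ldots,V_{k})$ is clear, as in Lemma \ref{havoc0}.

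For Part b the target is the interchange identity
\begin{equation}
E\frac{\partial p_{k,J,r}(\theta)}{\partial\theta_{q}}=\pi_{J}^{(r)}\left(\frac{\partial p(\theta)}{\partial\theta_{q}}\right),\label{interchange}
\end{equation}
from which both assertions follow at once: the Besov bound is Proposition \ref{proj_error} applied with $f=\partial p(\theta)/\partial\theta_{q}\in\mathcal{B}_{s}$, $0<s<r$, and the weakened $\mathcal{L}^{2}$-statement is the convergence $\pi_{J}^{(r)}(f)\to f$ in $\mathcal{L}^{2}$ for $f\in\mathcal{L}^{2}$ (again Proposition \ref{proj_error}). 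To prove (\ref{interchange}) I would work at the level of a single coefficient and justify two interchanges. First, $E[\partial\hat{\gamma}_{lJ}^{(r)}(\theta)/\partial\theta_{q}]=\partial_{\theta_{q}}E[\hat{\gamma}_{lJ}^{(r)}(\theta)]$: the difference quotients of $\theta\mapsto N_{mJ}^{(r)}(\rho(V_{i},\theta))$ are dominated, by the mean value theorem and $\Vert N^{(r)\prime}\Vert_{\infty,\mathbb{R}}\leq1$, by $2^{J}\sup_{\theta^{\prime}\in B(\theta)}|\partial\rho(v,\theta^{\prime})/\partial\theta_{q}|$, which is $\mu$-integrable by hypothesis. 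Second, $\partial_{\theta_{q}}E[\hat{\gamma}_{lJ}^{(r)}(\theta)]=\gamma_{lJ}^{(r)}(\partial p(\theta)/\partial\theta_{q})$: since $E\hat{\gamma}_{lJ}^{(r)}(\theta)=\gamma_{lJ}^{(r)}(p(\theta))$ as in Lemma \ref{havoc0}b, differentiating $\int_{0}^{1}N_{mJ}^{(r)}(x)p(\theta,x)dx$ under the integral sign is legitimate because its difference quotients are dominated, using $|N_{mJ}^{(r)}|\leq1$, by $\sup_{\theta^{\prime}\in B(\theta)}|\partial p(\theta^{\prime},x)/\partial\theta_{q}|$, which is $\lambda$-integrable by hypothesis. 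Summing $E\partial\hat{\gamma}_{lJ}^{(r)}(\theta)/\partial\theta_{q}=\gamma_{lJ}^{(r)}(\partial p(\theta)/\partial\theta_{q})$ over $l$ then gives (\ref{interchange}).

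The norm and linear-algebra estimates (Lemma \ref{basic}, Proposition \ref{gram}) and the variance computation in Part a are routine; the one genuinely delicate point is the justification of the two interchanges in Part b, i.e.\ checking that the integrability hypotheses on $\sup_{\theta^{\prime}\in B(\theta)}|\partial\rho/\partial\theta_{q}|$ and $\sup_{\theta^{\prime}\in B(\theta)}|\partial p/\partial\theta_{q}|$ supply dominating functions that are uniform in $m$ and admissible for the fixed bandwidth $2^{-J}$. This is the step I would write out in full.
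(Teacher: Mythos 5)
Your proposal is correct and follows essentially the same route as the paper: Part a is the identical variance computation (pull out $\sup_{v}|\partial\rho/\partial\theta_{q}|$, integrate against $p(\theta)$, bound the spline-derivative combination via (\ref{L_2_deriv}) and Proposition \ref{gram}, then sum the $2^{J}+r-1$ coefficients via (\ref{L_2})), yielding exactly $C_{2}(\theta,r)2^{3J}/k$; Part b is the same interchange identity $E\,\partial_{\theta_{q}}p_{k,J,r}(\theta)=\pi_{J}^{(r)}(\partial_{\theta_{q}}p(\theta))$, justified by the same two dominated-convergence arguments (with dominating functions $2^{J}\sup_{\theta'}|\partial_{\theta_{q}}\rho(v,\theta')|$ and $\sup_{\theta'}|\partial_{\theta_{q}}p(\theta',x)|$), followed by Proposition \ref{proj_error}. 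Your write-up of the two interchanges is in fact more explicit than the paper's one-sentence appeal to dominated convergence, but it is the same argument.
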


\begin{proof}
Observe that $p_{k,J,r}$ is differentiable at $\theta $ because $r\geq 3$ is
assumed. To prove Part a note that 
\begin{equation*}
\frac{\partial p_{k,J,r}(\theta )}{\partial \theta _{q}}-E\frac{\partial
p_{k,J,r}(\theta )}{\partial \theta _{q}}=\sum_{l=-r+1}^{2^{J}-1}\left( 
\frac{\partial \hat{\gamma}_{lJ}^{(r)}(\theta )}{\partial \theta _{q}}-E%
\frac{\partial \hat{\gamma}_{lJ}^{(r)}(\theta )}{\partial \theta _{q}}%
\right) N_{lJ}^{(r)},
\end{equation*}%
and that the $\mathcal{L}^{2}$-norm of this expression is measurable by
Fubini's Theorem; also note that the expectations in the above display exist
since the B-spline basis functions are bounded and since $\sup_{v\in 
\mathcal{V}}\left\vert \frac{\partial \rho (v,\theta )}{\partial \theta _{q}}%
\right\vert <\infty $ has been assumed. Now, using the chain rule and (\ref%
{L_2_deriv}), we obtain%
\begin{eqnarray}
&&E\left( \frac{\partial \hat{\gamma}_{lJ}^{(r)}(\theta )}{\partial \theta
_{q}}-E\frac{\partial \hat{\gamma}_{lJ}^{(r)}(\theta )}{\partial \theta _{q}}%
\right) ^{2}\leq \frac{2^{2J}}{k}E\left( \frac{\partial \rho (V_{i},\theta )%
}{\partial \theta _{q}}\sum_{m=-r+1}^{2^{J}-1}g_{J}^{(r)lm}N_{mJ}^{(r)\prime
}(x)_{\mid x=\rho (V_{i},\theta )}\right) ^{2}  \notag \\
&\leq &\frac{2^{2J}}{k}\sup_{v\in \mathcal{V}}\left\vert \frac{\partial \rho
(v,\theta )}{\partial \theta _{q}}\right\vert ^{2}\int_{0}^{1}\left(
\sum_{m=-r+1}^{2^{J}-1}g_{J}^{(r)lm}N_{mJ}^{(r)\prime }(x)\right)
^{2}p(\theta ,x)dx  \label{bound_6} \\
&\leq &\frac{2^{2J}}{k}\sup_{v\in \mathcal{V}}\left\vert \frac{\partial \rho
(v,\theta )}{\partial \theta _{q}}\right\vert ^{2}\left\Vert p(\theta
)\right\Vert _{\infty }\left\Vert
\sum_{m=-r+1}^{2^{J}-1}g_{J}^{(r)lm}N_{mJ}^{(r)\prime }\right\Vert
_{2}^{2}\leq \frac{2^{3J+2}}{k}d_{r}^{2}\sup_{v\in \mathcal{V}}\left\vert 
\frac{\partial \rho (v,\theta )}{\partial \theta _{q}}\right\vert
^{2}\left\Vert p(\theta )\right\Vert _{\infty }.  \notag
\end{eqnarray}%
An application of Lemma \ref{basic} then completes the proof of Part a.

To prove Part b, note that 
\begin{eqnarray}
\int_{\mathcal{V}}\frac{\partial }{\partial \theta _{q}}N_{mJ}^{(r)}(\rho
(v,\theta ))d\mu (v) &=&\frac{\partial }{\partial \theta _{q}}\int_{\mathcal{%
V}}N_{mJ}^{(r)}(\rho (v,\theta ))d\mu (v)  \notag \\
&=&\frac{\partial }{\partial \theta _{q}}\int_{0}^{1}N_{mJ}^{(r)}(x)p(\theta
,x)dx=\int_{0}^{1}N_{mJ}^{(r)}(x)\frac{\partial }{\partial \theta _{q}}%
p(\theta ,x)dx,  \notag
\end{eqnarray}%
where the two-fold interchange of integration and differentiation is
permitted by dominated convergence in view of the maintained dominance
assumptions on the derivatives of $\rho $ and $p$ as well as the boundedness
of the B-spline basis functions and their first derivative. Consequently, 
\begin{eqnarray}
E\frac{\partial p_{k,J,r}(\theta ,y)}{\partial \theta _{q}}
&=&2^{J}\sum_{l=-r+1}^{2^{J}-1}\sum_{m=-r+1}^{2^{J}-1}g_{J}^{(r)lm}\int_{%
\mathcal{V}}\frac{\partial }{\partial \theta _{q}}N_{mJ}^{(r)}(\rho
(v,\theta ))d\mu (v)N_{lJ}^{(r)}(y)  \label{interchange} \\
&=&2^{J}\sum_{l=-r+1}^{2^{J}-1}\sum_{m=-r+1}^{2^{J}-1}g_{J}^{(r)lm}%
\int_{0}^{1}N_{mJ}^{(r)}(x)\frac{\partial }{\partial \theta _{q}}p(\theta
,x)dxN_{lJ}^{(r)}(y)=\pi _{J}^{(r)}\left( \frac{\partial }{\partial \theta
_{q}}p(\theta )\right) ,  \notag
\end{eqnarray}%
and Part b now follows immediately from Proposition \ref{proj_error} in
Appendix \ref{App_Spline}.
\end{proof}

\begin{lemma}
\label{uniformrate}a. Suppose Assumption R(i) is satisfied, $r\geq 2$, $%
\Theta $ is a bounded subset of $\mathbb{R}^{b}$, and $\sup_{\theta \in
\Theta }\left\Vert p(\theta )\right\Vert _{\infty }<\infty $. Then there
exist finite positive constants $C_{3}$ and $C_{4}$, depending only on $%
\Theta $, $b$, $\rho $, $r$, and $\sup_{\theta \in \Theta }\left\Vert
p(\theta )\right\Vert _{\infty }$ but not on $k$ and $J$, such that 
\begin{equation*}
E\sup_{\theta \in \Theta }\Vert p_{k,J,r}(\theta )-Ep_{k,J,r}(\theta )\Vert
_{2}^{2}\leq C_{3}\frac{2^{J}J}{k},
\end{equation*}%
holds for all $k\geq 1$ and $J\geq 1$ satisfying $2^{J}J\leq C_{4}k$.
Furthermore, for $0<s^{\prime }\leq 1$%
\begin{equation}
E\sup_{\theta \in \Theta }\Vert p_{k,J,r}(\theta )-Ep_{k,J,r}(\theta )\Vert
_{s^{\prime },2}^{2}\leq C_{0}(s^{\prime })^{2}C_{3}\frac{2^{J(2s^{\prime
}+1)}J}{k}  \label{bes}
\end{equation}%
holds for all $k\geq 1$ and $J\geq 1$ satisfying $2^{J}J\leq C_{4}k$ where $%
C_{0}(s^{\prime })$ is given in Lemma \ref{basic}.

b. Suppose Assumption R(ii) is satisfied for some interior point $\theta
_{0} $ of $\Theta $, $\sup_{\theta \in B(\theta _{0})}\left\Vert p(\theta
)\right\Vert _{\infty }<\infty $ and $r\geq 3$ hold. Then there exist finite
positive constants $C_{5}$ and $C_{6}$, depending only on $B(\theta _{0})$, $%
b$, $\rho $, $r$ and $\sup_{\theta \in B(\theta _{0})}\left\Vert p(\theta
)\right\Vert _{\infty }$ but not on $k$ and $J$, such that for every $%
q=1,\ldots ,b$%
\begin{equation*}
E\sup_{\theta \in B(\theta _{0})}\left\Vert \frac{\partial }{\partial \theta
_{q}}p_{k,J,r}(\theta )-E\frac{\partial }{\partial \theta _{q}}%
p_{k,J,r}(\theta )\right\Vert _{2}^{2}\leq C_{5}\frac{2^{3J}J}{k}
\end{equation*}%
holds for all $k\geq 1$ and $J\geq 1$ satisfying $2^{J}J\leq C_{6}k$.

c. Suppose the assumptions of Part b are satisfied except that now $r\geq 4$%
. Then there exist finite positive constants $C_{7}$ and $C_{8}$, depending
only on $B(\theta _{0})$, $b$, $\rho $, $r$ and $\sup_{\theta \in B(\theta
_{0})}\left\Vert p(\theta )\right\Vert _{\infty }$ but not on $k$ and $J$,
such that for every $q,q^{\prime }=1,\ldots ,b$%
\begin{equation*}
E\sup_{\theta \in B(\theta _{0})}\left\Vert \frac{\partial ^{2}}{\partial
\theta _{q}\partial \theta _{q^{\prime }}}p_{k,J,r}(\theta )-E\frac{\partial
^{2}}{\partial \theta _{q}\partial \theta _{q^{\prime }}}p_{k,J,r}(\theta
)\right\Vert _{2}^{2}\leq C_{7}\frac{2^{5J}J}{k}
\end{equation*}%
holds for all $k\geq 1$ and $J\geq 1$ satisfying $2^{J}J\leq C_{8}k$.
\end{lemma}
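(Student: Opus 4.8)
The plan is to treat each of the three displayed quantities as the expected squared supremum over $\theta$ of a centered empirical process based on the simulated sample $V_1,\ldots,V_k$, and to obtain the extra factor $J$ (relative to the pointwise bounds of Lemmata \ref{havoc0} and \ref{havoc}) from a maximal inequality over $\Theta$. I begin with Part a. Since $p_{k,J,r}(\theta)-Ep_{k,J,r}(\theta)$ lies in $\mathcal{S}_J(r)$ with coefficient vector $\hat\gamma(\theta)-E\hat\gamma(\theta)=(\hat\gamma_{lJ}^{(r)}(\theta)-E\hat\gamma_{lJ}^{(r)}(\theta))_l$, the estimates (\ref{L_2}) and (\ref{bes_0}) of Lemma \ref{basic} reduce both assertions of Part a to the single bound
\[
E\sup_{\theta\in\Theta}\sum_{l=-r+1}^{2^J-1}\bigl(\hat\gamma_{lJ}^{(r)}(\theta)-E\hat\gamma_{lJ}^{(r)}(\theta)\bigr)^2\le C\,\frac{2^{2J}J}{k}.
\]
Indeed, multiplying such a bound by $2^{-J}$ yields the $\mathcal{L}^2$-claim via (\ref{L_2}), while multiplying it by $C_0(s')^2 2^{J(2s'-1)}$ yields the Besov-claim (\ref{bes}) via (\ref{bes_0}). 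The pointwise analogue (fixed $\theta$, without the $J$) is already contained in Lemma \ref{havoc0}: summing the coefficient estimate (\ref{bound_gamma}) over the $\sim 2^J$ indices $l$ produces exactly $\sim 2^{2J}/k$.

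For the uniformity I would view $G_\theta:=\hat\gamma(\theta)-E\hat\gamma(\theta)$ as a $k^{-1}$-normalized sum of i.i.d. centered vectors in $\mathbb{R}^{2^J+r-1}$, the $i$-th summand being $\psi_\theta(V_i)=2^J(G_J^{(r)})^{-1}\mathbf{N}(\rho(V_i,\theta))$ with $\mathbf{N}(x)=(N_{mJ}^{(r)}(x))_m$, and bound $E\sup_\theta\|G_\theta\|_{\ell^2}^2$ by a maximal inequality. Two inputs feed this inequality. First, using Proposition \ref{gram} and the spectral well-conditioning of $2^{-J}G_J^{(r)}$ (DeVore and Lorentz (1993), Theorem 5.4.2) one checks the envelope bound $\|\psi_\theta(v)\|_{\ell^2}\le C2^J$ for all $v$, together with the weak-variance bound $\sup_\theta E\|\psi_\theta(V)\|_{\ell^2}^2\le C2^{2J}$ coming from (\ref{bound_gamma}). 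Second, a Hölder-in-$\theta$ modulus: since $N^{(r)}$ is globally Lipschitz with constant $\le1$ for $r\ge2$ (Section \ref{rates}), $\|\mathbf{N}(x)-\mathbf{N}(x')\|_{\ell^2}\le C_r2^J|x-x'|$, and Assumption R(i) then gives
\[
\|\psi_\theta(v)-\psi_{\theta'}(v)\|_{\ell^2}\le C\,2^{2J}L\,\|\theta-\theta'\|^{\alpha}
\]
uniformly in $v$. Since $\Theta$ is a bounded subset of $\mathbb{R}^b$, its covering numbers in the induced pseudometric are polynomial. Feeding the envelope, weak variance and modulus into the moment inequalities for empirical processes of Gin\'{e} and Koltchinskii (2006) collected in the Appendix (equivalently, a net-plus-fluctuation argument in which the fluctuation between net points is killed by the above modulus) then upgrades the pointwise $2^{2J}/k$ to $2^{2J}J/k$: the $J$ is precisely the logarithmic factor supplied by the moment inequality, and the side condition $2^JJ\le C_4k$ is what guarantees that this leading ``variance'' term dominates the envelope-type remainder term.

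Parts b and c proceed along identical lines, the only changes being the differentiation of the coefficients and the correspondingly stronger smoothness demanded of $\rho$ and of the B-splines. For Part b ($r\ge3$) the chain rule gives $\partial_{\theta_q}\hat\gamma_{lJ}^{(r)}(\theta)=k^{-1}\sum_i\frac{\partial\rho}{\partial\theta_q}(V_i,\theta)\,2^J\sum_m g_J^{(r)lm}N_{mJ}^{(r)\prime}(\rho(V_i,\theta))$; the pointwise second-moment estimate (\ref{bound_6}) of Lemma \ref{havoc}, summed over $l$ and multiplied by $2^{-J}$, gives the pointwise rate $2^{3J}/k$, and the uniform upgrade is exactly as in Part a, now using the Lipschitz bound for $N^{(r)\prime}$ (constant $\le2$, valid for $r\ge3$; cf. (\ref{norm_ineq_2}) and Section \ref{rates}) and the boundedness and Hölder continuity of $\partial\rho/\partial\theta_q$ from Assumption R(ii) to supply envelope, weak variance, and modulus. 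For Part c ($r\ge4$) the product and chain rules express $\partial^2_{\theta_q\theta_{q'}}\hat\gamma_{lJ}^{(r)}(\theta)$ through $N^{(r)\prime\prime}$ and $N^{(r)\prime}$ together with first and second derivatives of $\rho$; the same computation gives the pointwise rate $2^{5J}/k$, and the uniform argument uses that $N^{(r)\prime\prime}$ is Lipschitz with constant $\le4$ for $r\ge4$ (cf. (\ref{norm_ineq_3}) and Section \ref{rates}) and that $\nabla_\theta^2\rho$ is bounded and Hölder (Assumption R(ii)).

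I expect the main obstacle to be the uniform (maximal-inequality) step rather than the pointwise computations. The two delicate points are (i) producing the Hölder-in-$\theta$ modulus with the correct power of $2^J$ in front — this is where the spline Lipschitz constants and Assumption R must be combined carefully, and where the increasing orders $r\ge3,4$ enter through the existence and Lipschitz continuity of $N^{(r)\prime}$ and $N^{(r)\prime\prime}$ — and (ii) organizing the moment inequality so that it cleanly separates the variance term (yielding the asserted rate times the logarithmic factor $J$) from the envelope remainder, the side condition $2^JJ\le C_4k$ being exactly what is needed for the former to dominate.
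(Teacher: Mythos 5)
Your overall skeleton matches the paper's in spirit: reduce to the coefficient vector via Lemma \ref{basic}, get the factor $J$ as the logarithm supplied by an empirical-process moment inequality, and read the side condition $2^{J}J\leq C_{4}k$ as the range restriction of that inequality; your pointwise computations and your identification of where $r\geq 3$, $r\geq 4$ and the Lipschitz constants of $N^{(r)\prime }$, $N^{(r)\prime \prime }$ enter are also correct. The gap is in how you organize the uniformity step. You keep the whole coefficient vector together and treat $\theta \mapsto \psi _{\theta }=2^{J}(G_{J}^{(r)})^{-1}\mathbf{N}(\rho (\cdot ,\theta ))$ as a single $\mathbb{R}^{2^{J}+r-1}$-valued process, proposing to feed envelope $\Vert \psi _{\theta }\Vert _{\ell ^{2}}\lesssim 2^{J}$, ``weak variance'' $E\Vert \psi _{\theta }\Vert _{\ell ^{2}}^{2}\lesssim 2^{2J}$ and the H\"{o}lder modulus into the inequalities of Appendix \ref{App_Moment}. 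But Proposition \ref{exp} is a statement about \emph{real-valued} classes. To apply it you must scalarize, e.g.\ via $\Vert x\Vert _{\ell ^{2}}=\sup_{\Vert u\Vert \leq 1}u^{\prime }x$, and then the index set acquires the $\ell ^{2}$ unit ball of dimension $2^{J}+r-1$: the covering-number exponent becomes $v\sim 2^{J}$ instead of a constant. Tracking this through the underlying bound (of the form $\sqrt{vn\sigma ^{2}\log (AU/\sigma )}+vU\log (AU/\sigma )$), the envelope term squared is $v^{2}U^{2}\log ^{2}(AU/\sigma )\sim 2^{4J}J^{2}$, which after the $k^{-2}$ normalization is dominated by the claimed $2^{2J}J/k$ only when $2^{2J}J\lesssim k$ --- strictly stronger than the stated side condition $2^{J}J\leq C_{4}k$; equivalently, keeping the side condition forces $b_{0}\sim v^{-1}\sim 2^{-J}$ and leaves the constant $b_{1}(v,b_{0})$ uncontrolled as $J\rightarrow \infty $. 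In addition, your $2^{2J}$ is the strong (trace) variance $E\Vert \psi _{\theta }\Vert ^{2}$, not the weak variance $\sup_{u,\theta }E(u^{\prime }\psi _{\theta })^{2}\lesssim 2^{J}$ that the scalar inequality requires. Finally, the ``equivalent'' one-step net-plus-fluctuation argument is not equivalent: the fluctuation bound $2^{2J}L\Vert \theta -\theta ^{\prime }\Vert ^{\alpha }$ is a worst-case sup-norm bound, so pushing it below the target $2^{J}\sqrt{J/k}$ forces a net with $\log N\gtrsim \log k$, and for $k$ much larger than $2^{J}$ this yields a factor $\log k$ where the lemma claims $J$; only chaining (which Proposition \ref{exp} encapsulates) avoids this.

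The repair is the decomposition your reduction stops just short of, and it is exactly what the paper does: after Lemma \ref{basic}, exchange the supremum and the sum over $l$, i.e.\ bound $E\sup_{\theta }\sum_{l}(\hat{\gamma}_{lJ}^{(r)}(\theta )-E\hat{\gamma}_{lJ}^{(r)}(\theta ))^{2}$ by $\sum_{l}E\sup_{\theta }(\hat{\gamma}_{lJ}^{(r)}(\theta )-E\hat{\gamma}_{lJ}^{(r)}(\theta ))^{2}$, and apply Proposition \ref{exp} separately to each \emph{scalar} class $\mathcal{H}_{l,J,r}=\left\{ h_{\theta ,l}:\theta \in \Theta \right\} $, where $h_{\theta ,l}(v)=\sum_{m}g_{J}^{(r)lm}[N_{mJ}^{(r)}(\rho (v,\theta ))-EN_{mJ}^{(r)}(\rho (V_{i},\theta ))]$. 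By the row-sum bound of Proposition \ref{gram} each such class has envelope $U\leq 2d_{r}$ \emph{independent of }$J$, weak variance $\sigma ^{2}\lesssim 2^{-J}$, and VC-type covering numbers with constant exponent $v=\max (b/\alpha ,2)$; the factor $2^{J}$ from the Lipschitz modulus enters only through the constant $A$, hence only inside the logarithm, giving $E\sup_{\theta }|\sum_{i}h_{\theta ,l}(V_{i})|^{2}\lesssim k2^{-J}J$ precisely under $2^{J}J\lesssim k$. Summing these $\sim 2^{J}$ identical bounds and multiplying by $2^{-J}$ (resp.\ by $C_{0}(s^{\prime })^{2}2^{J(2s^{\prime }-1)}$ for (\ref{bes})) yields Part a; Parts b and c then go through verbatim with the classes built from $N^{(r)\prime }$, $N^{(r)\prime \prime }$ and the derivatives of $\rho $, whose envelopes are again $J$-free constants, once the uniformity step is organized coordinatewise in this way.
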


\begin{proof}
a. By Lemma \ref{basic} we have 
\begin{equation*}
E\sup_{\theta \in \Theta }\Vert p_{k,J,r}(\theta )-Ep_{k,J,r}(\theta )\Vert
_{2}^{2}\leq 2^{-J}\sum_{l=-r+1}^{2^{J}-1}E\sup_{\theta \in \Theta }\left( 
\hat{\gamma}_{lJ}^{(r)}(\theta )-E\hat{\gamma}_{lJ}^{(r)}(\theta )\right)
^{2}.
\end{equation*}%
Note that the suprema in the above display are measurable as the functions
over which the suprema are taken depend continuously on $\theta $ in view of
assumption R(i) and $r\geq 2$. We bound the r.h.s. in the above display by
applying the moment inequality given in Proposition \ref{exp} in Appendix %
\ref{App_Moment}: fix an arbitrary $l$ and express the corresponding summand
in the above display as%
\begin{equation}
E\sup_{\theta \in \Theta }\left( \hat{\gamma}_{lJ}^{(r)}(\theta )-E\hat{%
\gamma}_{lJ}^{(r)}(\theta )\right) ^{2}=\frac{2^{2J}}{k^{2}}E\sup_{\theta
\in \Theta }\left\vert \sum_{i=1}^{k}h_{\theta ,l}(V_{i})\right\vert ^{2}
\label{coeff_0}
\end{equation}%
where 
\begin{equation*}
h_{\theta ,l}(v)=\sum_{m=-r+1}^{2^{J}-1}g_{J}^{(r)lm}\left[
N_{mJ}^{(r)}(\rho (v,\theta ))-EN_{mJ}^{(r)}(\rho (V_{i},\theta ))\right]
\end{equation*}%
and set $\mathcal{H}_{l,J,r}=\left\{ h_{\theta ,l}:\theta \in \Theta
\right\} $. Furthermore, set $U=d_{r}\max \left( 2,\sup_{\theta \in \Theta
}\left\Vert p(\theta )\right\Vert _{\infty }^{1/2}\right) $ and $\sigma
^{2}=2^{-J}U^{2}$. Then $0<\sigma \leq U$ holds, and using the calculations
that have led to (\ref{bound_gamma}) we obtain for every $\theta \in \Theta $%
\begin{equation*}
Eh_{\theta ,l}^{2}(V_{i})\leq E\left(
\sum_{m=-r+1}^{2^{J}-1}g_{J}^{(r)lm}N_{mJ}^{(r)}(\rho (v,\theta ))\right)
^{2}\leq 2^{-J}d_{r}^{2}\left\Vert p(\theta )\right\Vert _{\infty }\leq
2^{-J}d_{r}^{2}\sup_{\theta \in \Theta }\left\Vert p(\theta )\right\Vert
_{\infty }\leq \sigma ^{2}.
\end{equation*}%
Furthermore, using (\ref{norm_ineq}), we obtain for every $\theta \in \Theta 
$%
\begin{equation*}
\sup_{v\in \mathcal{V}}\left\vert h_{\theta ,l}\right\vert \leq
2d_{r}\left\Vert N^{(r)}\right\Vert _{\infty ,\mathbb{R}}\leq 2d_{r}\leq U.
\end{equation*}%
We next bound the uniform $\mathsf{L}^{\infty }$-covering numbers of $%
\mathcal{H}_{l,J,r}$: observe that the elements of $\mathcal{H}_{l,J,r}$
satisfy for $\theta $, $\theta ^{\prime }\in \Theta $%
\begin{equation}
\sup_{v\in \mathcal{V}}\left\vert h_{\theta ,l}(v)-h_{\theta ^{\prime
},l}(v)\right\vert \leq 2^{J+1}d_{r}L\left\Vert \theta -\theta ^{\prime
}\right\Vert ^{\alpha },  \label{Hoeld}
\end{equation}%
where $L$, $\alpha $ are the H\"{o}lder constants from Assumption R(i) and
where we have made use of the fact that $N^{(r)}$ has Lipschitz constant
bounded by $1$ for $r\geq 2$; cf. the discussion at the beginning of this
section. Since $\Theta $ is assumed to be bounded in $\mathbb{R}^{b}$, it
can be covered by fewer than $M/\delta ^{b}$ open balls with centers $\theta
_{i}\in \Theta $ and radius $\delta $, for $0<\delta \leq 1$ where $M$
depends only on $\Theta $. By (\ref{Hoeld}), the functions $h_{\theta
_{i},l} $ in $\mathcal{H}_{l,J,r}$ corresponding to the $\theta _{i}$'s give
rise to a covering of $\mathcal{H}_{l,J,r}$ by sup-norm balls of radius $%
2^{J+1}d_{r}L\delta ^{\alpha }$. Consequently, the $\mathsf{L}^{\infty }$%
-covering numbers satisfy%
\begin{equation}
N(\mathcal{H}_{l,J,r},\mathsf{L}^{\infty }(\mathcal{V}),\varepsilon )\leq
M\left( \frac{2^{J+1}d_{r}L}{\varepsilon }\right) ^{b/\alpha }\text{ \ \ for 
}0<\varepsilon \leq 2^{J+1}d_{r}L.  \label{covering}
\end{equation}%
Replacing $M$ by $M_{\ast }=M\max \left( 1,\left( U/(2d_{r}L)\right)
^{b/\alpha }\right) $ in (\ref{covering}), guarantees that (\ref{covering})
then holds for $0<\varepsilon \leq 2U$, which leads to%
\begin{equation}
N(\mathcal{H}_{l,J,r},\mathsf{L}^{\infty }(\mathcal{V}),\varepsilon )\leq
(AU/\varepsilon )^{v}\text{ \ \ for }0<\varepsilon \leq 2U,
\label{covering1}
\end{equation}%
for $v=\max (b/\alpha ,2)$ and $A=\max \left( 2^{J+1}M_{\ast }^{\alpha
/b}d_{r}LU^{-1},2e\right) $, where we have also enforced $v\geq 2$ and $A>e$%
. Note that, apart from the factor $2^{J}$, $A$ depends only on $\Theta $, $%
b $, $\rho $ (via $\alpha $ and $L$), $r$ (via $d_{r}$), and $\sup_{\theta
\in \Theta }\left\Vert p(\theta )\right\Vert _{\infty }$. Observe that $%
\mathcal{H}_{l,J,r}$ contains a countable sup--norm dense subset in view of (%
\ref{Hoeld}) and separability of $\Theta $. Hence the expectation bound in
Part a of Proposition \ref{exp} in Appendix \ref{App_Moment} applied to this
subset and with $b_{0}=v^{-1}$ now yields the existence of positive finite
constants $C_{3}^{\prime }$ and $C_{4}^{\prime }$ both depending only on $%
\Theta $, $b$, $\rho $, $r$, and $\sup_{\theta \in \Theta }\left\Vert
p(\theta )\right\Vert _{\infty }$, such that for all $J\in \mathbb{N}$ and
all $k\geq C_{4}^{\prime }2^{J}J$%
\begin{equation}
E\sup_{\theta \in \Theta }\left\vert \sum_{i=1}^{k}h_{\theta
,l}(V_{i})\right\vert ^{2}\leq C_{3}^{\prime }k2^{-J}J.  \label{coeff}
\end{equation}%
Since this bound does not depend on the summation index $l$, the proof of
the first claim is complete upon setting $C_{3}=(r+1)C_{3}^{\prime }/2$ and $%
C_{4}=1/C_{4}^{\prime }$. The second claim follows immediately from applying
(\ref{bes_0}) in Lemma \ref{basic} to the l.h.s. of (\ref{bes}) and using (%
\ref{coeff_0}) and (\ref{coeff}), the measurability of the supremum in (\ref%
{bes}) following from Appendix \ref{App_Meas}.

b. Observe that $p_{k,J,r}$ is continuously differentiable on $B(\theta
_{0}) $ because of $r\geq 3$ and Assumption R(ii). Similarly as in Part a we
have measurability of the suprema and obtain from Lemma \ref{basic}%
\begin{eqnarray*}
&&E\sup_{\theta \in B(\theta _{0})}\left\Vert \frac{\partial }{\partial
\theta _{q}}p_{k,J,r}(\theta )-E\frac{\partial }{\partial \theta _{q}}%
p_{k,J,r}(\theta )\right\Vert _{2}^{2} \\
&\leq &2^{-J}\sum_{l=-r+1}^{2^{J}-1}E\sup_{\theta \in B(\theta _{0})}\left( 
\frac{\partial }{\partial \theta _{q}}\hat{\gamma}_{lJ}^{(r)}(\theta )-E%
\frac{\partial }{\partial \theta _{q}}\hat{\gamma}_{lJ}^{(r)}(\theta
)\right) ^{2} \\
&=&2^{-J}\sum_{l=-r+1}^{2^{J}-1}\frac{2^{4J}}{k^{2}}E\sup_{\theta \in
B(\theta _{0})}\left\vert \sum_{i=1}^{k}h_{\theta
,l}^{(1)}(V_{i})\right\vert ^{2}
\end{eqnarray*}%
where 
\begin{equation*}
h_{\theta ,l}^{(1)}(v)=\frac{\partial \rho (v,\theta )}{\partial \theta _{q}}%
\sum_{m=-r+1}^{2^{J}-1}g_{J}^{(r)lm}\left[ N^{(r)\prime }(2^{J}\rho
(v,\theta )-m)-EN^{(r)\prime }(2^{J}\rho (V_{i},\theta )-m)\right] .
\end{equation*}%
Set $\mathcal{H}_{l,J,r}^{(1)}\mathcal{=}\left\{ h_{\theta ,l}^{(1)}:\theta
\in B(\theta _{0})\right\} $ and define%
\begin{equation*}
U=2d_{r}\sup_{\theta \in B(\theta _{0})}\sup_{v\in \mathcal{V}}\left\vert 
\frac{\partial \rho (v,\theta )}{\partial \theta _{q}}\right\vert \max
\left( 1,\sup_{\theta \in B(\theta _{0})}\left\Vert p(\theta )\right\Vert
_{\infty }^{1/2}\right)
\end{equation*}%
and $\sigma ^{2}=2^{-J}U^{2}$. Then $0<\sigma \leq U$ holds (where we
exclude the trivial case $U=0$). Observing that $N_{mJ}^{(r)\prime
}(x)=2^{J}N^{(r)\prime }(2^{J}x-m)$ by the chain rule, we obtain, using the
same calculations that have led to (\ref{bound_6}), for $\theta \in B(\theta
_{0})$%
\begin{equation*}
Eh_{\theta ,l}^{(1)2}(V_{i})\leq 2^{-J+2}d_{r}^{2}\sup_{\theta \in B(\theta
_{0})}\sup_{v\in \mathcal{V}}\left\vert \frac{\partial \rho (v,\theta )}{%
\partial \theta _{q}}\right\vert ^{2}\sup_{\theta \in B(\theta
_{0})}\left\Vert p(\theta )\right\Vert _{\infty }\leq \sigma ^{2}\text{.}
\end{equation*}%
Furthermore, for every $\theta \in B(\theta _{0})$%
\begin{equation*}
\sup_{v\in \mathcal{V}}\left\vert h_{\theta ,l}^{(1)}\right\vert \leq
2\sup_{v\in \mathcal{V}}\left\vert \frac{\partial \rho (v,\theta )}{\partial
\theta _{q}}\right\vert d_{r}\left\Vert N^{(r)\prime }\right\Vert _{\infty ,%
\mathbb{R}}\leq 2d_{r}\sup_{\theta \in B(\theta _{0})}\sup_{v\in \mathcal{V}%
}\left\vert \frac{\partial \rho (v,\theta )}{\partial \theta _{q}}%
\right\vert \leq U,
\end{equation*}%
where we have made use of (\ref{norm_ineq_2}). To bound the uniform $\mathsf{%
L}^{\infty }$-covering numbers of $\mathcal{H}_{l,J,r}^{(1)}$, observe that
the elements of $\mathcal{H}_{l,J,r}^{(1)}$ satisfy for $\theta $, $\theta
^{\prime }\in B(\theta _{0})$%
\begin{eqnarray*}
&&\sup_{v\in \mathcal{V}}\left\vert h_{\theta ,l}^{(1)}(v)-h_{\theta
^{\prime },l}^{(1)}(v)\right\vert \leq \\
&&2d_{r}\left\Vert N^{(r)\prime }\right\Vert _{\infty ,\mathbb{R}%
}\sup_{\theta \in B(\theta _{0})}\sup_{v\in \mathcal{V}}\left\Vert \nabla
_{\theta }^{2}\rho (v,\theta )\right\Vert \left\Vert \theta -\theta ^{\prime
}\right\Vert +2^{J+1}d_{r}\sup_{\theta \in B(\theta _{0})}\sup_{v\in 
\mathcal{V}}\left\Vert \nabla _{\theta }\rho (v,\theta )\right\Vert
^{2}\left\Vert \theta -\theta ^{\prime }\right\Vert \\
&\leq &2^{J+1}d_{r}\left\{ \sup_{\theta \in B(\theta _{0})}\sup_{v\in 
\mathcal{V}}\left\Vert \nabla _{\theta }^{2}\rho (v,\theta )\right\Vert
+\sup_{\theta \in B(\theta _{0})}\sup_{v\in \mathcal{V}}\left\Vert \nabla
_{\theta }\rho (v,\theta )\right\Vert ^{2}\right\} \left\Vert \theta -\theta
^{\prime }\right\Vert \leq 2^{J}c_{\ast }\left\Vert \theta -\theta ^{\prime
}\right\Vert ,
\end{eqnarray*}%
where we have made use of (\ref{norm_ineq_2}), of the bound on the Lipschitz
constant of $N^{(r)\prime }$ given at the beginning of this section, and of
the boundedness of $B(\theta _{0})$; the constant $c_{\ast }$ is finite and
depends only on $\rho $, $r$, and $B(\theta _{0})$. Proceeding as in the
proof of Part a we obtain%
\begin{equation*}
N(\mathcal{H}_{l,J,r}^{(1)},\mathsf{L}^{\infty }(\mathcal{V}),\varepsilon
)\leq (AU/\varepsilon )^{v}\text{ \ \ for }0<\varepsilon \leq 2U,
\end{equation*}%
for $v=\max (b,2)$ and $A=\max \left( 2^{J}M^{1/b}\max (c_{\ast
}U^{-1},1),2e\right) $ with $M$ only depending on $B(\theta _{0})$. Note
that, apart from the factor $2^{J}$, $A$ depends only on $B(\theta _{0})$, $%
b $, $\rho $, $r$ and $\sup_{\theta \in B(\theta _{0})}\left\Vert p(\theta
)\right\Vert _{\infty }$. Part a of Proposition \ref{exp} in Appendix \ref%
{App_Moment} applied to a countable sup-norm dense subset of $\mathcal{H}%
_{l,J,r}^{(1)}$ and with $b_{0}=v^{-1}$ now yields the existence of positive
finite constants $C_{5}^{\prime }$ and $C_{6}^{\prime }$ depending only on $%
B(\theta _{0})$, $b$, $\rho $, $r$ and $\sup_{\theta \in B(\theta
_{0})}\left\Vert p(\theta )\right\Vert _{\infty }$, such that for all $J\in 
\mathbb{N}$ and all $k\geq C_{6}^{\prime }2^{J}J$%
\begin{equation*}
E\sup_{\theta \in \Theta }\left\vert \sum_{i=1}^{k}h_{\theta
,l}^{(1)}(V_{i})\right\vert ^{2}\leq C_{5}^{\prime }k2^{-J}J
\end{equation*}%
holds. Since this bound does not depend on $l$, the proof is complete upon
setting $C_{5}=(r+1)C_{5}^{\prime }/2$ and $C_{6}=1/C_{6}^{\prime }$.

c. The proof is similar to the proof of Part b: Observe that $p_{k,J,r}$ is
twice continuously differentiable on $B(\theta _{0})$ because of $r\geq 4$
and Assumption R(ii). By Lemma \ref{basic} we have%
\begin{equation*}
E\sup_{\theta \in B(\theta _{0})}\left\Vert \frac{\partial ^{2}}{\partial
\theta _{q}\partial \theta _{q^{\prime }}}p_{k,J,r}(\theta )-E\frac{\partial
^{2}}{\partial \theta _{q}\partial \theta _{q^{\prime }}}p_{k,J,r}(\theta
)\right\Vert _{2}^{2}\leq \frac{2^{5J}}{k^{2}}\sum_{l=-r+1}^{2^{J}-1}E\sup_{%
\theta \in B(\theta _{0})}\left\vert \sum_{i=1}^{k}h_{\theta
,l}^{(2)}(V_{i})\right\vert ^{2}
\end{equation*}%
where%
\begin{eqnarray*}
h_{\theta ,l}^{(2)}(v) &=&2^{-J}\frac{\partial ^{2}\rho (v,\theta )}{%
\partial \theta _{q}\partial \theta _{q^{\prime }}}%
\sum_{m=-r+1}^{2^{J}-1}g_{J}^{(r)lm}\left[ N^{(r)\prime }(2^{J}\rho
(v,\theta )-m)-EN^{(r)\prime }(2^{J}\rho (V_{i},\theta )-m)\right] + \\
&&\frac{\partial \rho (v,\theta )}{\partial \theta _{q}}\frac{\partial \rho
(v,\theta )}{\partial \theta _{q^{\prime }}}%
\sum_{m=-r+1}^{2^{J}-1}g_{J}^{(r)lm}\left[ N^{(r)\prime \prime }(2^{J}\rho
(v,\theta )-m)-EN^{(r)\prime \prime }(2^{J}\rho (V_{i},\theta )-m)\right] .
\end{eqnarray*}%
Set $\mathcal{H}_{l,J,r}^{(2)}\mathcal{=}\left\{ h_{\theta ,l}^{(2)}:\theta
\in B(\theta _{0})\right\} $, set%
\begin{eqnarray*}
U &=&d_{r}\max \left\{ \sup_{\theta \in B(\theta _{0})}\sup_{v\in \mathcal{V}%
}\left\Vert \nabla _{\theta }^{2}\rho (v,\theta )\right\Vert +4\sup_{\theta
\in B(\theta _{0})}\sup_{v\in \mathcal{V}}\left\Vert \nabla _{\theta }\rho
(v,\theta )\nabla _{\theta }\rho (v,\theta )^{\prime }\right\Vert ,\right. \\
&&\left. \sup_{\theta \in B(\theta _{0})}\left\Vert p(\theta )\right\Vert
_{\infty }^{1/2}\left[ 2\sup_{\theta \in B(\theta _{0})}\sup_{v\in \mathcal{V%
}}\left\Vert \nabla _{\theta }^{2}\rho (v,\theta )\right\Vert
^{2}+32\sup_{\theta \in B(\theta _{0})}\sup_{v\in \mathcal{V}}\left\Vert
\nabla _{\theta }\rho (v,\theta )\nabla _{\theta }\rho (v,\theta )^{\prime
}\right\Vert ^{2}\right] ^{1/2}\right\}
\end{eqnarray*}%
and $\sigma ^{2}=2^{-J}U^{2}$. Then $0<\sigma \leq U$ holds (where we
exclude the trivial case $U=0$), and for $\theta \in B(\theta _{0})$ we have%
\begin{eqnarray*}
Eh_{\theta ,l}^{(2)2}(V_{i}) &\leq &2^{3-3J}d_{r}^{2}\sup_{\theta \in
B(\theta _{0})}\left\Vert p(\theta )\right\Vert _{\infty }\sup_{\theta \in
B(\theta _{0})}\sup_{v\in \mathcal{V}}\left\vert \frac{\partial ^{2}\rho
(v,\theta )}{\partial \theta _{q}\partial \theta _{q^{\prime }}}\right\vert
^{2} \\
&&+2^{5-J}d_{r}^{2}\sup_{\theta \in B(\theta _{0})}\left\Vert p(\theta
)\right\Vert _{\infty }\sup_{\theta \in B(\theta _{0})}\sup_{v\in \mathcal{V}%
}\left\vert \frac{\partial \rho (v,\theta )}{\partial \theta _{q}}\frac{%
\partial \rho (v,\theta )}{\partial \theta _{q^{\prime }}}\right\vert
^{2}\leq \sigma ^{2},
\end{eqnarray*}%
using a calculation similar to the one that has led to (\ref{bound_6}) and
making use of Lemma \ref{basic}. Similarly, for $\theta \in B(\theta _{0})$
we obtain%
\begin{equation*}
\sup_{v\in \mathcal{V}}\left\vert h_{\theta ,l}^{(2)}(v)\right\vert \leq
2d_{r}\left\{ 2^{-J}\sup_{v\in \mathcal{V}}\left\vert \frac{\partial
^{2}\rho (v,\theta )}{\partial \theta _{q}\partial \theta _{q^{\prime }}}%
\right\vert \left\Vert N^{(r)\prime }\right\Vert _{\infty ,\mathbb{R}%
}+\sup_{v\in \mathcal{V}}\left\vert \frac{\partial \rho (v,\theta )}{%
\partial \theta _{q}}\frac{\partial \rho (v,\theta )}{\partial \theta
_{q^{\prime }}}\right\vert \left\Vert N^{(r)\prime \prime }\right\Vert
_{\infty ,\mathbb{R}}\right\} \leq U,
\end{equation*}%
using $\left\Vert N^{(r)\prime }\right\Vert _{\infty ,\mathbb{R}}\leq 1$ and 
$\left\Vert N^{(r)\prime \prime }\right\Vert _{\infty ,\mathbb{R}}\leq 2$,
cf. (\ref{norm_ineq_2}), (\ref{norm_ineq_3}). Furthermore, for $\theta $, $%
\theta ^{\prime }\in B(\theta _{0})$ we get again using (\ref{norm_ineq_2}),
(\ref{norm_ineq_3}), the bounds for the Lipschitz constants of $N^{(r)\prime
}$ and $N^{(r)\prime \prime }$ given at the beginning of this section, and
boundedness of $B(\theta _{0})$%
\begin{eqnarray*}
\sup_{v\in \mathcal{V}}\left\vert h_{\theta ,l}^{(2)}(v)-h_{\theta ^{\prime
},l}^{(2)}(v)\right\vert &\leq &2^{1-J}d_{r}L^{\prime }\left\Vert \theta
-\theta ^{\prime }\right\Vert ^{\beta } \\
&&+12d_{r}\sup_{\theta \in B(\theta _{0})}\sup_{v\in \mathcal{V}}\left\Vert
\nabla _{\theta }\rho (v,\theta )\right\Vert \sup_{\theta \in B(\theta
_{0})}\sup_{v\in \mathcal{V}}\left\Vert \nabla _{\theta }^{2}\rho (v,\theta
)\right\Vert \left\Vert \theta -\theta ^{\prime }\right\Vert \\
&&+2^{J+3}d_{r}\sup_{\theta \in B(\theta _{0})}\sup_{v\in \mathcal{V}%
}\left\Vert \nabla _{\theta }\rho (v,\theta )\right\Vert \sup_{\theta \in
B(\theta _{0})}\sup_{v\in \mathcal{V}}\left\Vert \nabla _{\theta }\rho
(v,\theta )\right\Vert ^{2}\left\Vert \theta -\theta ^{\prime }\right\Vert \\
&\leq &2^{J}c_{\ast \ast }\left\Vert \theta -\theta ^{\prime }\right\Vert
^{\beta }
\end{eqnarray*}%
with the constant $c_{\ast \ast }$ being finite and depending only on $%
B(\theta _{0})$, $r$, $\rho $. Proceeding as in the proof of Part a we obtain%
\begin{equation*}
N(\mathcal{H}_{l,J,r}^{(2)},\mathsf{L}^{\infty }(\mathcal{V}),\varepsilon
)\leq (AU/\varepsilon )^{v}\text{ \ \ for }0<\varepsilon \leq 2U,
\end{equation*}%
where now $v=\max (b/\beta ,2)$ and $A=\max \left( 2^{J}M^{\beta /b}\max
(c_{\ast \ast }U^{-1},1),2e\right) $ with $M$ only depending on $B(\theta
_{0})$. Again, apart from the factor $2^{J}$, $A$ depends only on $B(\theta
_{0})$, $b$, $\rho $, $r$, and $\sup_{\theta \in B(\theta _{0})}\left\Vert
p(\theta )\right\Vert _{\infty }$. Part a of Proposition \ref{exp} in
Appendix \ref{App_Moment} applied to a countable sup-norm dense subset of $%
\mathcal{H}_{l,J,r}^{(2)}$ and with $b_{0}=v^{-1}$ now yields the existence
of positive finite constants $C_{7}^{\prime }$ and $C_{8}^{\prime }$
depending only on $B(\theta _{0})$, $b$, $\rho $, $r$, and $\sup_{\theta \in
B(\theta _{0})}\left\Vert p(\theta )\right\Vert _{\infty }$, such that for
all $J\in \mathbb{N}$ and all $k\geq C_{8}^{\prime }2^{J}J$%
\begin{equation*}
E\sup_{\theta \in \Theta }\left\vert \sum_{i=1}^{k}h_{\theta
,l}^{(2)}(V_{i})\right\vert ^{2}\leq C_{7}^{\prime }k2^{-J}J
\end{equation*}%
holds. Since this bound does not depend on $l$, the proof is complete upon
setting $C_{7}=(r+1)C_{7}^{\prime }/2$ and $C_{8}=1/C_{8}^{\prime }$.
\end{proof}

\begin{corollary}
\label{consistency} Suppose Assumption R(i) is satisfied and $r\geq 2$.
Suppose further that $\Theta $ is a bounded subset of $\mathbb{R}^{b}$ and
that $\{p(\theta ):\theta \in \Theta \}$ is bounded in $\mathsf{B}_{t}$ for
some $1/2<t\leq 1$. If $J_{k}\in \mathbb{N}$ satisfies 
\begin{equation}
\sup_{k\geq 1}2^{J_{k}(2t+1)}J_{k}/k<\infty ,  \label{cond}
\end{equation}%
then $\sup_{\theta \in \Theta }\Vert p_{k,J_{k},r}(\theta )\Vert _{t,2}$ is
stochastically bounded, i.e., 
\begin{equation*}
\lim_{M\rightarrow \infty }\sup_{k\geq 1}\Pr \left( \sup_{\theta \in \Theta
}\Vert p_{k,J_{k},r}(\theta )\Vert _{t,2}>M\right) =0.
\end{equation*}%
If (\ref{cond}) holds and $J_{k}\rightarrow \infty $ for $k\rightarrow
\infty $, then, for every $0<t^{\prime }<t$, $\sup_{\theta \in \Theta }\Vert
p_{k,J_{k},r}(\theta )-p(\theta )\Vert _{t^{\prime },2}$ as well as $%
\sup_{\theta \in \Theta }\Vert p_{k,J_{k},r}(\theta )-p(\theta )\Vert
_{\infty }$ converge to zero in (outer) probability as $k\rightarrow \infty $%
.
\end{corollary}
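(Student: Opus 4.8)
The plan is to split, for each fixed $\theta$, the estimator into its stochastic fluctuation $p_{k,J_k,r}(\theta)-Ep_{k,J_k,r}(\theta)$ and its mean $Ep_{k,J_k,r}(\theta)=\pi_{J_k}^{(r)}(p(\theta))$, and to control the two pieces uniformly in $\theta$ by Lemma~\ref{uniformrate} and Lemma~\ref{havoc0}(b), respectively. A preliminary observation I would record is that (\ref{cond}) forces $2^{J_k}J_k/k\to 0$: writing $B=\sup_{k}2^{J_k(2t+1)}J_k/k<\infty$, the hypothesis gives $2^{J_k(2t+1)}\le Bk$, hence $2^{J_k}\le (Bk)^{1/(2t+1)}$ and $J_k=O(\log k)$, so $2^{J_k}J_k/k=O(k^{1/(2t+1)-1}\log k)\to 0$ because $2t+1>1$. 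Consequently the constraint $2^{J_k}J_k\le C_4k$ (and its analogues) needed for Lemma~\ref{uniformrate} holds for all $k$ beyond some $k_0$; the finitely many earlier indices are harmless because, by (\ref{bes_0}) together with the uniform coefficient bound $|\hat{\gamma}_{lJ}^{(r)}(\theta)|\le 2^{J}d_r$, $\sup_{\theta}\|p_{k,J_k,r}(\theta)\|_{t,2}$ is bounded by a finite deterministic constant for each fixed $k$.

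For the stochastic-boundedness assertion I would use the triangle inequality $\sup_{\theta}\|p_{k,J_k,r}(\theta)\|_{t,2}\le \sup_{\theta}\|p_{k,J_k,r}(\theta)-Ep_{k,J_k,r}(\theta)\|_{t,2}+\sup_{\theta}\|\pi_{J_k}^{(r)}(p(\theta))\|_{t,2}$. The expected square of the first term is at most $C_0(t)^2C_3\,2^{J_k(2t+1)}J_k/k$ by (\ref{bes}) with $s'=t$, which is uniformly bounded over $k\ge k_0$ by (\ref{cond}); Markov's inequality then renders this term stochastically bounded. For the second term I would invoke the uniform (in $J$) $\mathsf{B}_t$-boundedness of the Schoenberg projections $\pi_J^{(r)}$ from Appendix~\ref{App_Spline}, together with $\sup_{\theta}\|p(\theta)\|_{t,2}<\infty$ (the hypothesis that $\{p(\theta)\}$ is bounded in $\mathsf{B}_t$, which is admissible since $t\le 1<r$). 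Measurability of these suprema is immediate, as $\theta\mapsto p_{k,J_k,r}(\theta)$ is continuous under Assumption~R(i) and $\Theta$ is separable.

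For the convergence assertions I would first establish $\sup_{\theta}\|p_{k,J_k,r}(\theta)-p(\theta)\|_2\to 0$ in (outer) probability: the fluctuation has expected square $\le C_3\,2^{J_k}J_k/k\to 0$ by the $\mathcal{L}^2$ bound in Lemma~\ref{uniformrate}(a), while the bias satisfies $\sup_{\theta}\|\pi_{J_k}^{(r)}(p(\theta))-p(\theta)\|_2\le 2^{-J_kt}c_t'\sup_{\theta}\|p(\theta)\|_{t,2}\to 0$ by Lemma~\ref{havoc0}(b) and $J_k\to\infty$. Then, for $0<t'<t$, I would apply pointwise in $\theta$ the interpolation inequality $\|g\|_{t',2}\le C\|g\|_2^{1-t'/t}\|g\|_{t,2}^{t'/t}$ (the Besov analogue of (\ref{interpol}), i.e. $\mathcal{B}_{t'}=(\mathcal{L}^2,\mathcal{B}_t)_{t'/t,\infty}$, following the same interpolation argument as in Appendix~\ref{App_Spline}) to $g=p_{k,J_k,r}(\theta)-p(\theta)$ and take the supremum, obtaining a bound by $C(\sup_{\theta}\|p_{k,J_k,r}(\theta)-p(\theta)\|_2)^{1-t'/t}(\sup_{\theta}\|p_{k,J_k,r}(\theta)-p(\theta)\|_{t,2})^{t'/t}$. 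The first factor is $o_p(1)$ by the $\mathcal{L}^2$ step and the second is $O_p(1)$ by the stochastic boundedness already proved (bounding $\|p_{k,J_k,r}(\theta)-p(\theta)\|_{t,2}$ by $\|p_{k,J_k,r}(\theta)\|_{t,2}+\|p(\theta)\|_{t,2}$), so the product tends to zero in outer probability. Finally, picking $t'$ with $1/2<t'<t$ (possible since $t>1/2$) and using the continuous embedding $\mathsf{B}_{t'}\hookrightarrow\mathsf{C}([0,1])$ for $t'>1/2$ (Proposition~\ref{elem}), $\sup_{\theta}\|p_{k,J_k,r}(\theta)-p(\theta)\|_\infty\le C\sup_{\theta}\|p_{k,J_k,r}(\theta)-p(\theta)\|_{t',2}\to 0$ in outer probability.

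The step I expect to require the most care is the uniform-in-$J$ $\mathsf{B}_t$-stability of $\pi_J^{(r)}$ used for the deterministic term, which is where the admissibility $t\le r-1/2$ and the Shadrin-type bound (Proposition~\ref{gram}) enter and which must be drawn from Appendix~\ref{App_Spline}; the remaining ingredients are a routine assembly of the second-moment bounds of Lemma~\ref{uniformrate}, the projection-bias bound of Lemma~\ref{havoc0}(b), and interpolation, with the only bookkeeping being the verification that (\ref{cond}) places $J_k$ in the regime where Lemma~\ref{uniformrate} applies and the passage to outer probability to sidestep measurability of norms of $p_{k,J_k,r}(\theta)-p(\theta)$ under the weak hypotheses of the corollary.
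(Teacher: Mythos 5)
Your proposal is correct, and it splits into a part that mirrors the paper and a part that takes a genuinely different route. For the stochastic-boundedness claim you argue exactly as the paper does: the same fluctuation-plus-mean decomposition, the bound (\ref{bes}) of Lemma \ref{uniformrate} at level $s'=t$, the identity $Ep_{k,J_k,r}(\theta)=\pi_{J_k}^{(r)}(p(\theta))$ combined with the uniform-in-$J$ $\mathsf{B}_t$-stability of the projections (Proposition \ref{proj_error_2}), and the same disposal of the finitely many small $k$ via a deterministic bound. For the convergence claims, however, the paper stays in the $t'$-scale throughout: it applies Lemma \ref{uniformrate} with $s'=t'$ to the fluctuation, giving $2^{J_kt'}\sqrt{2^{J_k}J_k/k}=\sqrt{2^{J_k(2t'+1)}J_k/k}\rightarrow 0$ under (\ref{cond}) because $t'<t$, and uses the second estimate of Proposition \ref{proj_error_2}, $\Vert \pi_{J_k}^{(r)}(h)-h\Vert _{t',2}\leq 2^{-J_k(t-t')}c_{t,t'}^{\prime \prime \prime }\Vert h\Vert _{t,2}$, for the bias; you instead prove $\mathcal{L}^2$-convergence (via Lemma \ref{uniformrate} and Lemma \ref{havoc0}b) and then interpolate, $\Vert g\Vert _{t',2}\lesssim \Vert g\Vert _{2}^{1-t'/t}\Vert g\Vert _{t,2}^{t'/t}$, recycling the already-established $O_p(1)$ bound in $\mathsf{B}_t$. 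Both routes work with comparable effort; what yours buys is that the projection error never has to be estimated in the $t'$-norm (the crude $\mathcal{L}^2$ rate plus $\mathsf{B}_t$-boundedness suffices), while the paper's is more self-contained, since your interpolation step rests on the identity $(\mathcal{L}^2,\mathcal{B}_t)_{t'/t,\infty }=\mathcal{B}_{t'}$ up to norm equivalence---a standard reiteration fact, but one the paper never establishes (its (\ref{interpol}) covers only the pair $(\mathcal{L}^2,\mathcal{W}_2^1)$, and the operator trick used there yields only the easy embedding direction), so you should cite the reiteration theorem, e.g.\ in DeVore and Lorentz (1993), Chapter 6, at that point. Your handling of measurability via outer probability and the final sup-norm step (choose $1/2<t'<t$ and apply Proposition \ref{elem}) coincide with the paper.
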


\begin{proof}
Observe that under (\ref{cond}) we have $2^{J_{k}}J_{k}\leq C_{4}k$ for $k$
large enough, where $C_{4}$ is as in Lemma \ref{uniformrate}, and that $%
\{p(\theta ):\theta \in \Theta \}$ is sup-norm bounded. Now, using Lemma \ref%
{uniformrate} together with Ljapunov's inequality as well as Proposition \ref%
{proj_error_2} in Appendix \ref{App_Spline}, we arrive, for $k$ large
enough, at%
\begin{eqnarray*}
E\sup_{\theta \in \Theta }\Vert p_{k,J_{k},r}(\theta )\Vert _{t,2} &\leq
&E\sup_{\theta \in \Theta }\Vert p_{k,J_{k},r}(\theta
)-Ep_{k,J_{k},r}(\theta )\Vert _{t,2}+\sup_{\theta \in \Theta }\Vert
Ep_{k,J_{k},r}(\theta )\Vert _{t,2} \\
&\leq &C_{0}(t)\sqrt{C_{3}}2^{J_{k}t}\sqrt{\frac{2^{J_{k}}J_{k}}{k}}%
+\sup_{\theta \in \Theta }\Vert \pi _{J_{k}}^{(r)}(p(\theta ))\Vert _{t,2} \\
&\leq &C_{0}(t)\sqrt{C_{3}}\sup_{k\geq 1}2^{J_{k}t}\sqrt{\frac{2^{J_{k}}J_{k}%
}{k}}+c_{t}^{\prime \prime }\sup_{\theta \in \Theta }\Vert p(\theta )\Vert
_{t,2}<\infty ,
\end{eqnarray*}%
where we have used the already established fact that $Ep_{k,J_{k},r}(\theta
)=\pi _{J_{k}}^{(r)}(p(\theta ))$. [Measurability of $\sup_{\theta \in
\Theta }\Vert p_{k,J_{k},r}(\theta )\Vert _{t,2}$ follows from Appendix \ref%
{App_Meas}.] Together with the observation that $E\sup_{\theta \in \Theta
}\Vert p_{k,J_{k},r}(\theta )\Vert _{t,2}<\infty $ for every $k\geq 1$, this
completes the proof of the first claim. Next, Lemma \ref{uniformrate}
(applied with $s^{\prime }=t^{\prime }$) gives for $k$ large enough ($%
E^{\ast }$ denoting outer expectation)%
\begin{eqnarray*}
E^{\ast }\sup_{\theta \in \Theta }\Vert p_{k,J_{k},r}(\theta )-p(\theta
)\Vert _{t^{\prime },2} &\leq &E\sup_{\theta \in \Theta }\Vert
p_{k,J_{k},r}(\theta )-Ep_{k,J_{k},r}(\theta )\Vert _{t^{\prime
},2}+\sup_{\theta \in \Theta }\Vert \pi _{J_{k}}^{(r)}(p(\theta ))-p(\theta
)\Vert _{t^{\prime },2} \\
&\leq &C_{0}(t^{\prime })\sqrt{C_{3}}2^{J_{k}t^{\prime }}\sqrt{\frac{%
2^{J_{k}}J_{k}}{k}}+2^{-J_{k}(t-t^{\prime })}c_{t,t^{\prime }}^{\prime
\prime \prime }\sup_{\theta \in \Theta }\Vert p(\theta )\Vert _{t,2},
\end{eqnarray*}%
where we have used Proposition \ref{proj_error_2} in Appendix \ref%
{App_Spline} in the final step. The upper bound now converges to zero as $%
k\rightarrow \infty $. The claim regarding the sup-norm now follows from
Proposition \ref{elem} in Appendix \ref{App_Spline}.
\end{proof}

\bigskip

The following corollary is proved analogously using Lemma \ref{havoc0}
instead of Lemma \ref{uniformrate}, with measurability of the relevant
quantities following from Appendix \ref{App_Meas}.

\begin{corollary}
\label{consistency_2} Suppose $r_{\ast }\geq 2$ and that $p_{0}\in \mathcal{B%
}_{t}$ for some $1/2<t\leq 1$. If $j_{n}\in \mathbb{N}$ satisfies 
\begin{equation}
\sup_{n\geq 1}2^{j_{n}(2t+1)}/n<\infty ,  \label{cond_2}
\end{equation}%
then $\Vert p_{n,j_{n},r_{\ast }}\Vert _{t,2}$ is stochastically bounded,
i.e., 
\begin{equation*}
\lim_{M\rightarrow \infty }\sup_{n\geq 1}\Pr \left( \Vert p_{n,j_{n},r_{\ast
}}\Vert _{t,2}>M\right) =0.
\end{equation*}%
If (\ref{cond_2}) holds and $j_{n}\rightarrow \infty $ for $n\rightarrow
\infty $, then, for every $0<t^{\prime }<t$, $\Vert p_{n,j_{n},r_{\ast
}}-p_{0}\Vert _{t^{\prime }}$ as well as $\Vert p_{n,j_{n},r_{\ast }}-\tilde{%
p}_{0}\Vert _{\infty }$\ converge to zero in probability as $n\rightarrow
\infty $, where $\tilde{p}_{0}$ is the continuous version of $p_{0}$.
\end{corollary}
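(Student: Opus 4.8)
The plan is to transcribe the proof of Corollary \ref{consistency} to the present setting, with two simplifications: here the object estimated is the single density $p_{0}$ rather than a family $p(\theta )$, so no supremum over $\theta $ (and hence no logarithmic factor) appears, which is exactly why condition (\ref{cond_2}) carries the rate $2^{j_{n}(2t+1)}/n$ rather than $2^{J_{k}(2t+1)}J_{k}/k$; and the pointwise moment bound of Lemma \ref{havoc0} is used in place of the uniform moment bound of Lemma \ref{uniformrate}. Throughout I would work with the continuous version $\tilde{p}_{0}$ of $p_{0}$, which exists and is bounded on $[0,1]$ by Proposition \ref{elem} since $p_{0}\in \mathcal{B}_{t}$ with $t>1/2$; this is what makes the constant $C_{1}$ in Lemma \ref{havoc0} finite when that lemma is applied (through its Part c) to $p_{n,j_{n},r_{\ast }}$, and it is also the version to which the sup-norm statement refers. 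The Besov (semi)norms below are measurable by Appendix \ref{App_Meas}. I read $\Vert \cdot \Vert _{t^{\prime }}$ in the statement as the Besov seminorm $\Vert \cdot \Vert _{t^{\prime },2}$, in parallel with Corollary \ref{consistency}.

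For the stochastic boundedness claim I would bound $E\Vert p_{n,j_{n},r_{\ast }}\Vert _{t,2}$ and then invoke Markov's inequality. Using the triangle inequality together with $Ep_{n,j_{n},r_{\ast }}=\pi _{j_{n}}^{(r_{\ast })}(p_{0})$,
\begin{equation*}
E\Vert p_{n,j_{n},r_{\ast }}\Vert _{t,2}\leq E\Vert p_{n,j_{n},r_{\ast }}-Ep_{n,j_{n},r_{\ast }}\Vert _{t,2}+\Vert \pi _{j_{n}}^{(r_{\ast })}(p_{0})\Vert _{t,2}.
\end{equation*}
To the first term I would apply Ljapunov's inequality followed by the Besov-norm variance bound of Lemma \ref{havoc0}(a),(c), giving a bound of order $2^{j_{n}t}\sqrt{2^{j_{n}}/n}=(2^{j_{n}(2t+1)}/n)^{1/2}$, which is bounded uniformly in $n$ by (\ref{cond_2}). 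The second term is controlled by the Besov-stability of the spline projection (Proposition \ref{proj_error_2}), yielding $\Vert \pi _{j_{n}}^{(r_{\ast })}(p_{0})\Vert _{t,2}\leq c_{t}^{\prime \prime }\Vert p_{0}\Vert _{t,2}<\infty $ (here $r_{\ast }\geq 2$ and $t\leq 1$ are used). Hence $\sup _{n}E\Vert p_{n,j_{n},r_{\ast }}\Vert _{t,2}<\infty $, which gives the asserted stochastic boundedness.

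For the convergence claim I would, for fixed $0<t^{\prime }<t$, bound in the same manner
\begin{equation*}
E\Vert p_{n,j_{n},r_{\ast }}-p_{0}\Vert _{t^{\prime },2}\leq E\Vert p_{n,j_{n},r_{\ast }}-Ep_{n,j_{n},r_{\ast }}\Vert _{t^{\prime },2}+\Vert \pi _{j_{n}}^{(r_{\ast })}(p_{0})-p_{0}\Vert _{t^{\prime },2}.
\end{equation*}
Lemma \ref{havoc0}(a),(c) applied with $s^{\prime }=t^{\prime }$ bounds the first term by a constant times $2^{j_{n}t^{\prime }}\sqrt{2^{j_{n}}/n}=2^{-j_{n}(t-t^{\prime })}(2^{j_{n}(2t+1)}/n)^{1/2}$, that is, a bounded factor times $2^{-j_{n}(t-t^{\prime })}\to 0$; Proposition \ref{proj_error_2} bounds the second (bias) term by $2^{-j_{n}(t-t^{\prime })}c_{t,t^{\prime }}^{\prime \prime \prime }\Vert p_{0}\Vert _{t,2}\to 0$ as $j_{n}\to \infty $. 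Thus $E\Vert p_{n,j_{n},r_{\ast }}-p_{0}\Vert _{t^{\prime },2}\to 0$, hence convergence in probability in $\Vert \cdot \Vert _{t^{\prime },2}$ for every $0<t^{\prime }<t$. To obtain the sup-norm statement I would select one $t^{\prime }$ with $1/2<t^{\prime }<t$ (possible since $t>1/2$) and invoke the continuous embedding $\mathsf{B}_{t^{\prime }}\hookrightarrow \mathsf{C}([0,1])$ of Proposition \ref{elem}, so that $\Vert p_{n,j_{n},r_{\ast }}-\tilde{p}_{0}\Vert _{\infty }\leq C(t^{\prime })\Vert p_{n,j_{n},r_{\ast }}-p_{0}\Vert _{t^{\prime },2}\to 0$ in probability.

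Since the whole argument reduces to direct applications of the two already-proven ingredients (the moment bound of Lemma \ref{havoc0} and the projection estimates of Propositions \ref{proj_error_2} and \ref{elem}), there is no genuine obstacle. The only points demanding care are the bookkeeping of exponents — verifying that boundedness of $2^{j_{n}(2t+1)}/n$ forces $2^{j_{n}(t+1/2)}/\sqrt{n}$ to be bounded and, after extracting the gap factor $2^{-j_{n}(t-t^{\prime })}$, to vanish — and the consistent use of the bounded continuous version $\tilde{p}_{0}$ together with the restriction $t^{\prime }>1/2$ that is required to upgrade Besov convergence to sup-norm convergence.
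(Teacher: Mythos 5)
Your proof is correct and is precisely the paper's intended argument: the paper dispatches this corollary with the single remark that it "is proved analogously [to Corollary \ref{consistency}] using Lemma \ref{havoc0} instead of Lemma \ref{uniformrate}," which is exactly the transcription you carried out, with the right exponent bookkeeping, the bias/variance split via $Ep_{n,j_{n},r_{\ast }}=\pi _{j_{n}}^{(r_{\ast })}(p_{0})$ and Proposition \ref{proj_error_2}, and the sup-norm upgrade through Proposition \ref{elem} with $1/2<t^{\prime }<t$. Your observations that the continuous version $\tilde{p}_{0}$ makes Lemma \ref{havoc0}(c) applicable and that no logarithmic factor (hence no side constraint on $2^{j_{n}}$ versus $n$) is needed in the non-uniform setting match the paper's setup exactly.
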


\section{Uniform Central Limit Theorems for Spline Projection Estimators}

We now study the difference between the random (signed) measure $%
P_{k,J,r}(\theta )$ given by%
\begin{equation*}
dP_{k,J,r}(\theta )(y)=p_{k,J,r}(\theta ,y)dy
\end{equation*}%
and $P_{k}(\theta )$, acting on Besov classes by integration. In the
following $\left\Vert \nu \right\Vert _{\mathcal{F}}$ stands for $\sup_{f\in 
\mathcal{F}}\left\vert \nu (f)\right\vert $, where $\nu $ is a (signed)
measure.

\begin{theorem}
\label{UCLT} Suppose Assumption R(i) is satisfied, $r\geq 2$, $\Theta $ is a
bounded subset of $\mathbb{R}^{b}$, and $\left\{ p(\theta ):\theta \in
\Theta \right\} $ is a bounded subset of $\mathcal{B}_{t}$ for some $t$, $%
0<t<r$. Let $\mathcal{F}$ be a (non-empty) bounded subset of $\mathsf{B}_{s}$
for some $s$, $1/2<s<1$. Then for every $1/2<s^{\prime }\leq s$ there is a
finite positive constant $C_{9}$, depending only on $s$, $s^{\prime }$, $t$, 
$\mathcal{F}$, $\Theta $, $b$, $\alpha $, $L$, and $\left\{ p(\theta
):\theta \in \Theta \right\} $ but not on $J$ and $k$, such that for every $%
J\geq 1$ and $k\geq 1$ 
\begin{equation}
E\sup_{\theta \in \Theta }\left\Vert P_{k,J,r}(\theta )-P_{k}(\theta
)\right\Vert _{\mathcal{F}}\leq C_{9}(2^{-J(t+s)}+2^{-J(s-s^{\prime
})}k^{-1/2}).  \label{rate}
\end{equation}%
Furthermore, 
\begin{equation}
\sup_{\theta \in \Theta }\Vert P_{k}(\theta )-P(\theta )\Vert _{\mathcal{F}%
}=O_{p}(k^{-1/2})  \label{rate_2}
\end{equation}%
holds. Finally, if $J_{k}\rightarrow \infty $ as $k\rightarrow \infty $
satisfies $2^{-J_{k}(t+s)}=o(k^{-1/2})$, then for every $\theta \in \Theta $ 
\begin{equation*}
\sqrt{k}\left( P_{k,J_{k},r}(\theta )-P(\theta )\right) \rightsquigarrow
_{\ell ^{\infty }(\mathcal{F})}G_{P(\theta )},
\end{equation*}%
where $G_{P(\theta )}$ is a sample-bounded and sample-continuous generalized 
$P(\theta )$-Brownian bridge indexed by $\mathcal{F}$. Here $%
\rightsquigarrow _{\ell ^{\infty }(\mathcal{F})}$denotes convergence in law
as defined in Chapter 1 of van der Vaart and Wellner (1996).
\end{theorem}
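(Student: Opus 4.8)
The plan is to reduce the entire statement to the behaviour of one centred empirical process by exploiting the self-adjointness of the spline projection. Writing $K_{J}^{(r)}$ for the kernel in (\ref{proj_kern}) and using that the Gram inverse $g_{J}^{(r)lm}$ is symmetric in $(l,m)$, the kernel is symmetric; hence $p_{k,J,r}(\theta ,\cdot )=\int_{0}^{1}K_{J}^{(r)}(x,\cdot )dP_{k}(\theta )(x)$ and, by Fubini, for every $f\in \mathcal{F}$,
\begin{equation*}
(P_{k,J,r}(\theta )-P_{k}(\theta ))(f)=P_{k}(\theta )(\pi _{J}^{(r)}f-f).
\end{equation*}
I would then split this into a deterministic bias and a centred stochastic part,
\begin{equation*}
P_{k}(\theta )(\pi _{J}^{(r)}f-f)=P(\theta )(\pi _{J}^{(r)}f-f)+[P_{k}(\theta )-P(\theta )](\pi _{J}^{(r)}f-f).
\end{equation*}
For the bias, orthogonality of $\pi _{J}^{(r)}f-f$ to $\mathcal{S}_{J}(r)$ gives $\langle \pi _{J}^{(r)}f-f,p(\theta )\rangle =\langle \pi _{J}^{(r)}f-f,p(\theta )-\pi _{J}^{(r)}p(\theta )\rangle $, so Cauchy--Schwarz together with Proposition \ref{proj_error} applied to both $f\in \mathsf{B}_{s}$ and $p(\theta )\in \mathcal{B}_{t}$ (and boundedness of $\mathcal{F}$ and of $\{p(\theta )\}$ in the respective Besov norms) yields $\sup_{\theta \in \Theta }\sup_{f\in \mathcal{F}}|P(\theta )(\pi _{J}^{(r)}f-f)|\leq C2^{-J(s+t)}$, the first term in (\ref{rate}).

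The heart of the argument is the stochastic term, a centred empirical process in the $V_{i}$ indexed by $(\theta ,f)$ through $h_{\theta ,f}(v)=(\pi _{J}^{(r)}f-f)(\rho (v,\theta ))$, which I would control with the moment inequality of Proposition \ref{exp}, exactly as in the proof of Lemma \ref{uniformrate}. Two inputs produce the rate. Fixing $s^{\prime }$ with $1/2<s^{\prime }\leq s$, the Besov embedding (Proposition \ref{elem}) and the intermediate-norm projection bound (Proposition \ref{proj_error_2}) give $\Vert \pi _{J}^{(r)}f-f\Vert _{\infty }\leq C\Vert \pi _{J}^{(r)}f-f\Vert _{s^{\prime },2}\leq C^{\prime }2^{-J(s-s^{\prime })}\Vert f\Vert _{s,2}$, so the class has a \emph{shrinking} envelope $U\sim 2^{-J(s-s^{\prime })}$, while boundedness of $p(\theta )$ and Lemma \ref{basic} give weak variance $\sigma ^{2}\lesssim 2^{-2Js}$. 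The decisive point is that $\{\pi _{J}^{(r)}f-f:f\in \mathcal{F}\}$ lies in a $\mathsf{B}_{s^{\prime }}$-ball of radius $\sim 2^{-J(s-s^{\prime })}$, whose metric-entropy integral converges \emph{precisely because} $s^{\prime }>1/2$ and is itself of order $2^{-J(s-s^{\prime })}$; the dependence on $\theta $ only adds a Hölder/VC contribution (via Assumption R(i), with the spline Lipschitz constants entering the covering constant $A$ through a harmless polynomial-in-$2^{J}$ factor, hence only logarithmically). Feeding $U$, $\sigma $ and these covering numbers into Proposition \ref{exp} bounds the expected supremum by $C2^{-J(s-s^{\prime })}k^{-1/2}$, the second term in (\ref{rate}). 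The bound (\ref{rate_2}) is the same machinery run with the \emph{non}-shrinking envelope $\sup_{f\in \mathcal{F}}\Vert f\Vert _{\infty }<\infty $: the class $\{f\circ \rho (\cdot ,\theta ):f\in \mathcal{F},\ \theta \in \Theta \}$ has bounded envelope and variance and a convergent uniform entropy integral (again by $s>1/2$), so $E\sup_{\theta \in \Theta }\Vert P_{k}(\theta )-P(\theta )\Vert _{\mathcal{F}}\leq Ck^{-1/2}$.

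For the weak-convergence conclusion I would fix $\theta $ and decompose, in $\ell ^{\infty }(\mathcal{F})$,
\begin{equation*}
\sqrt{k}\,(P_{k,J_{k},r}(\theta )-P(\theta ))=\sqrt{k}\,(P_{k,J_{k},r}(\theta )-P_{k}(\theta ))+\sqrt{k}\,(P_{k}(\theta )-P(\theta )).
\end{equation*}
Applying (\ref{rate}) with $J=J_{k}$ (the $\sup_{\theta }$ there only strengthens the pointwise bound), the expected $\ell ^{\infty }(\mathcal{F})$-norm of the first summand is at most $C(\sqrt{k}\,2^{-J_{k}(t+s)}+2^{-J_{k}(s-s^{\prime })})$, which tends to $0$ under the assumption $2^{-J_{k}(t+s)}=o(k^{-1/2})$ together with $J_{k}\to \infty $; thus the smoothing remainder is asymptotically negligible in sup-norm over $\mathcal{F}$. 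The second summand is the ordinary empirical process of the i.i.d.\ sample $X_{i}(\theta )\sim P(\theta )$ indexed by $\mathcal{F}$; since $\mathcal{F}$ is a bounded subset of $\mathsf{B}_{s}$ with $s>1/2$ it is $P(\theta )$-Donsker (its uniform entropy integral converging as above), so $\sqrt{k}\,(P_{k}(\theta )-P(\theta ))\rightsquigarrow _{\ell ^{\infty }(\mathcal{F})}G_{P(\theta )}$ with the asserted sample-bounded, sample-continuous limit. The addition theorem for weak convergence in $\ell ^{\infty }(\mathcal{F})$ (Slutsky's lemma) then delivers the claim.

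The main obstacle is the entropy bookkeeping in the stochastic term of (\ref{rate}): one must show that the composed projection-error class $\{h_{\theta ,f}\}$ is governed by covering numbers of the form $(AU/\varepsilon )^{v}$ with $A$ carrying only a polynomial-in-$2^{J}$ factor, and simultaneously exploit that these functions sit in a $\mathsf{B}_{s^{\prime }}$-ball of shrinking radius, so that the entropy integral both converges (this is exactly where $s^{\prime }>1/2$, and hence $s>1/2$, is indispensable) and scales like $2^{-J(s-s^{\prime })}$. Getting the nonparametric $f$-entropy of the Besov ball $\mathcal{F}$ and the Hölder $\theta $-entropy of $\rho $ to combine into precisely the rate $2^{-J(s-s^{\prime })}k^{-1/2}$, rather than a cruder bound, is the delicate step; everything else is either the self-adjointness identity or the approximation estimates already recorded in Appendix \ref{App_Spline}.
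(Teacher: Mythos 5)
Your proposal is correct and follows essentially the same route as the paper's proof: the same self-adjointness identity reducing $(P_{k,J,r}(\theta)-P_{k}(\theta))(f)$ to $P_{k}(\theta)(\pi_{J}^{(r)}f-f)$, the same bias/centered split with the bias bounded by $2^{-J(s+t)}$ via idempotency plus Proposition \ref{proj_error}, the same shrinking-envelope-plus-Besov-ball-entropy treatment of the centered part (this is exactly what the paper isolates as Proposition \ref{ent1}, combining the Lorentz--v.Golitschek--Makovoz entropy bound for $\mathsf{B}_{s'}$-balls of radius $\sim 2^{-J(s-s')}$ with a H\"older-in-$\theta$ covering of $\Theta$, fed into Proposition \ref{exp}), and the same Slutsky-type decomposition for the Donsker conclusion with $s'$ taken strictly less than $s$. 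The only cosmetic differences are that you quote a sharper variance bound $\sigma^{2}\lesssim 2^{-2Js}$ (irrelevant, since the polynomial-entropy moment inequality used here does not involve $\sigma$) and you state (\ref{rate_2}) as an expectation bound rather than via the universal Donsker property of $\mathcal{F}_{\rho}$, both of which are harmless.
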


\begin{proof}
We first note that $\sup_{\theta \in \Theta }\left\Vert P_{k,J,r}(\theta
)-P_{k}(\theta )\right\Vert _{\mathcal{F}}$ and $\sup_{\theta \in \Theta
}\Vert P_{k}(\theta )-P(\theta )\Vert _{\mathcal{F}}$ are measurable since
they can be represented as suprema over countable dense subsets of $\Theta $
and $\mathcal{F}$ in view of Assumption R(i), $r\geq 2$, and separability of 
$\mathcal{F}$. For $f\in \mathcal{F}$ we can write, using (\ref{aux_est_2a}%
), (\ref{aux_est_2b}), (\ref{proj_kern}) and symmetry of the projection
kernel $K_{J}^{(r)}$,%
\begin{eqnarray*}
&&(P_{k,J,r}(\theta )-P_{k}(\theta ))(f)=\frac{1}{k}\sum_{i=1}^{k}\left(
\int_{0}^{1}f(y)K_{J}^{(r)}(X_{i}(\theta ),y)dy-f(X_{i}(\theta ))\right) \\
&=&\frac{1}{k}\sum_{i=1}^{k}(\pi _{J}^{(r)}(f)-f)(X_{i}(\theta
))=(P_{k}(\theta )-P(\theta ))(\pi _{J}^{(r)}(f)-f)+\int_{0}^{1}(\pi
_{J}^{(r)}(f)-f)(y)p(\theta )(y)dy \\
&=&A+B.
\end{eqnarray*}%
Consider first term B: Using $f\in \mathcal{L}^{2}$, $p(\theta )\in \mathcal{%
L}^{2}$, self-adjointness and idempotency of the projection $Id-\pi
_{J}^{(r)}$ we obtain%
\begin{eqnarray}
\left\vert \int_{0}^{1}\left( \pi _{J}^{(r)}(f)-f\right) (y)p(\theta
)(y)dy\right\vert &=&\left\vert \int_{0}^{1}\left( \left( Id-\pi
_{J}^{(r)}\right) f\right) (y)\left( \left( Id-\pi _{J}^{(r)}\right)
p(\theta )\right) (y)dy\right\vert  \notag \\
&\leq &\left\Vert f-\pi _{J}^{(r)}(f)\right\Vert _{2}\left\Vert p(\theta
)-\pi _{J}^{(r)}(p(\theta ))\right\Vert _{2}  \notag \\
&\leq &c_{s}^{\prime }c_{t}^{\prime }\left\Vert f\right\Vert
_{s,2}\left\Vert p(\theta )\right\Vert _{t,2}2^{-J(s+t)},  \label{bound_B}
\end{eqnarray}%
where we have used Proposition \ref{proj_error} for the last inequality.
Consider next the term A: Define for $J\geq 1$ the class of functions 
\begin{eqnarray}
\mathcal{F}_{J,r,\rho } &=&\left\{ \int_{0}^{1}K_{J}^{(r)}(\rho (\cdot
,\theta ),y)f(y)dy-f(\rho (\cdot ,\theta )):f\in \mathcal{F},\,\theta \in
\Theta \right\}  \notag \\
&=&\left\{ (\pi _{J}^{(r)}(f)-f)(\rho (\cdot ,\theta )):f\in \mathcal{F}%
,\,\theta \in \Theta \right\} ,  \label{F_J}
\end{eqnarray}%
which allows us to write 
\begin{equation}
E\sup_{\theta \in \Theta }\sup_{f\in \mathcal{F}}\left\vert (P_{k}(\theta
)-P(\theta ))(\pi _{J}^{(r)}(f)-f)\right\vert =\frac{1}{k}E\sup_{h\in 
\mathcal{F}_{J,r,\rho }}\left\vert \sum_{i=1}^{k}\left(
h(V_{i})-Eh(V_{i})\right) \right\vert \text{.}  \label{A}
\end{equation}%
Choose an arbitrary $s^{\prime }$ satisfying $1/2<s^{\prime }\leq s$ and
observe that $(\pi _{J}^{(r)}(f)-f)\in \mathsf{B}_{s}\subseteq \mathsf{B}%
_{s^{^{\prime }}}$ since $\mathcal{F}\subseteq \mathsf{B}_{s}$ by assumption
and that $\mathcal{S}_{J}(r)\subseteq \mathsf{B}_{s}\subseteq \mathsf{B}%
_{s^{^{\prime }}}$ in view of $s<1<r-1/2$. Propositions \ref{elem} and \ref%
{proj_error_2} in Appendix \ref{App_Spline} then give 
\begin{eqnarray*}
\sup_{h\in \mathcal{F}_{J,r,\rho }}\sup_{v\in \mathcal{V}}\left\vert
h(v)-Eh(V_{i})\right\vert &\leq &2\sup_{h\in \mathcal{F}_{J,r,\rho
}}\sup_{v\in \mathcal{V}}\left\vert h(v)\right\vert \leq 2\sup_{f\in 
\mathcal{F}}\left\Vert \pi _{J}^{(r)}(f)-f\right\Vert _{\infty } \\
&\leq &2c_{s^{\prime }}\sup_{f\in \mathcal{F}}\left\Vert \pi
_{J}^{(r)}(f)-f\right\Vert _{s^{\prime },2}\leq 2c_{s^{\prime
}}c_{s,s^{\prime }}^{\prime \prime \prime }\sup_{f\in \mathcal{F}}\left\Vert
f\right\Vert _{s,2}2^{-J(s-s^{\prime })}=:U
\end{eqnarray*}%
where $U<\infty $ since $\mathcal{F}$ is a (non-empty) bounded subset of $%
\mathsf{B}_{s}$. We may assume $U>0$, the case $U=0$ being trivial. Since $%
\mathcal{F}_{J,r,\rho }$ contains a countable sup-norm dense subset in view
of Proposition \ref{ent1} below, we may apply the moment inequality from
Proposition \ref{exp}, part b, in Appendix \ref{App_Moment} to (\ref{A})
(with $U$ as above, $\sigma =U$, $A^{\prime }=c^{\ast s^{\prime }}/\left(
2c_{s^{\prime }}c_{s,s^{\prime }}^{\prime \prime \prime }\sup_{f\in \mathcal{%
F}}\left\Vert f\right\Vert _{s,2}\right) $, and with $w=1/s^{\prime }$) and
make use of the entropy bound in Proposition \ref{ent1} below with $%
\varepsilon ^{\ast }=4c_{s^{\prime }}c_{s,s^{\prime }}^{\prime \prime \prime
}\sup_{f\in \mathcal{F}}\left\Vert f\right\Vert _{s,2}\geq 2U$. This gives
the bound%
\begin{equation*}
E\sup_{\theta \in \Theta }\sup_{f\in \mathcal{F}}\left\vert (P_{k}(\theta
)-P(\theta ))(\pi _{J}^{(r)}(f)-f)\right\vert \leq 2^{-J(s-s^{\prime
})+1}k^{-1/2}c_{s^{\prime }}c_{s,s^{\prime }}^{\prime \prime \prime
}\sup_{f\in \mathcal{F}}\left\Vert f\right\Vert _{s,2}b_{2}
\end{equation*}%
where the constant $b_{2}$ only depends on $A^{\prime }$ and $w$. Together
with (\ref{bound_B}), this proves the bound (\ref{rate}). To prove the
second claim, define the class%
\begin{equation}
\mathcal{F}_{\rho }=\left\{ f(\rho (\cdot ,\theta )):f\in \mathcal{F}%
,\,\theta \in \Theta \right\}  \label{F_rho}
\end{equation}%
and note that $\mathcal{F}_{\rho }$ is uniformly bounded since $\mathcal{F}$
is and that%
\begin{equation*}
\sup_{\theta \in \Theta }\Vert P_{k}(\theta )-P(\theta )\Vert _{\mathcal{F}}=%
\frac{1}{k}\sup_{h\in \mathcal{F}_{\rho }}\left\vert \sum_{i=1}^{k}\left(
h(V_{i})-Eh(V_{i})\right) \right\vert .
\end{equation*}%
Now (\ref{rate_2}) follows since $\mathcal{F}_{\rho }$ is a universal
Donsker class by Proposition \ref{ent1} below. The third claim of the
theorem follows immediately from (\ref{rate}) with $s^{\prime }$ chosen to
satisfy $s^{\prime }<s$, from the assumptions on $J_{k}$, and from the
universal Donsker property of $\left\{ f(\rho (\cdot ,\theta )):f\in 
\mathcal{F}\right\} $ for every $\theta $, which it inherits from $\mathcal{F%
}_{\rho }$.
\end{proof}

\begin{proposition}
\label{ent1}Suppose Assumption R(i) is satisfied, $r\geq 2$, and $\Theta $
is a bounded subset of $\mathbb{R}^{b}$. Let $\mathcal{F}$ be a (non-empty)
bounded subset of $\mathsf{B}_{s}$, $1/2<s<1$. Let $\mathcal{F}_{J,r,\rho }$
and $\mathcal{F}_{\rho }$ be defined as in (\ref{F_J}) and (\ref{F_rho}).
Then for every $1/2<s^{\prime }\leq s$ and every $\varepsilon ^{\ast }>0$
there exists a (positive) finite constant $c^{\ast }$, depending only on $s$%
, $s^{\prime }$, $\mathcal{F}$, $\Theta $, $b$, $\alpha $, $L$, and $%
\varepsilon ^{\ast }$ but not on $J$, such that for every $J\geq 1$ 
\begin{equation}
\log N(\mathcal{F}_{J,r,\rho },\mathsf{L}^{\infty }(\mathcal{V}),\varepsilon
)\leq 2^{-J(s-s^{\prime })/s^{\prime }}c^{\ast }\varepsilon ^{-1/s^{\prime
}}\quad \text{for }0<\varepsilon \leq \varepsilon ^{\ast }  \label{bound_1}
\end{equation}%
holds. Furthermore, for every $\varepsilon ^{\ast }>0$ there exists a
(positive) finite constant $c^{\ast \ast }$ (depending only on $s$, $%
\mathcal{F}$, $\Theta $, $b$, $\alpha $, $L$, and $\varepsilon ^{\ast }$)
such that%
\begin{equation}
\log N(\mathcal{F}_{\rho },\mathsf{L}^{\infty }(\mathcal{V}),\varepsilon
)\leq c^{\ast \ast }\varepsilon ^{-1/s}\quad \text{for }0<\varepsilon \leq
\varepsilon ^{\ast }  \label{bound_2}
\end{equation}%
holds. In particular, $\mathcal{F}_{\rho }$ and $\mathcal{F}_{J,r,\rho }$
are universal Donsker classes.
\end{proposition}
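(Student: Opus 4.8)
The plan is to reduce the $\mathsf{L}^{\infty}(\mathcal{V})$-entropy of each composed class to the supremum-norm entropy on $[0,1]$ of its underlying ``base class'', treating the dependence on $\theta$ separately and showing it is negligible. For $\mathcal{F}_{J,r,\rho}$ the base class is $\mathcal{G}_J=\{\pi_J^{(r)}(f)-f:f\in\mathcal{F}\}$, while for $\mathcal{F}_{\rho}$ it is $\mathcal{F}$ itself. The elementary but decisive observation is that composition with a fixed $\rho(\cdot,\theta)$ does not enlarge the sup-norm covering number: since $\rho(v,\theta)\in[0,1]$ for all $v\in\mathcal{V}$, any $\delta$-net $\{g_i\}$ of the base class in $\|\cdot\|_{\infty}$ on $[0,1]$ satisfies $\sup_{v}|g(\rho(v,\theta))-g_i(\rho(v,\theta))|\le\|g-g_i\|_{\infty}\le\delta$ for each fixed $\theta$.

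First I would bound the base class in the appropriate Besov norm. By Proposition \ref{proj_error_2}, $\|\pi_J^{(r)}(f)-f\|_{s',2}\le c_{s,s'}'''\|f\|_{s,2}2^{-J(s-s')}$, so $\mathcal{G}_J$ sits in a $\mathsf{B}_{s'}$-ball of radius $R_J:=c_{s,s'}'''\big(\sup_{f\in\mathcal{F}}\|f\|_{s,2}\big)2^{-J(s-s')}$, whereas the base class of $\mathcal{F}_{\rho}$ lies in a $\mathsf{B}_{s}$-ball of radius $R:=\sup_{f\in\mathcal{F}}\|f\|_{s,2}$. The essential analytic input is the sharp sup-norm metric entropy of Besov balls: for $\sigma>1/2$ there is $C_\sigma<\infty$ with $\log N(\{g:\|g\|_{\sigma,2}\le R\},\|\cdot\|_{\infty},\delta)\le C_\sigma (R/\delta)^{1/\sigma}$. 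This is the Besov-ball entropy estimate underlying the Donsker theory of Nickl (2007) and Gin\'{e} and Nickl (2008), and I would invoke it (equivalently it follows from the entropy-number bounds for the embedding $\mathsf{B}_\sigma\hookrightarrow\mathsf{L}^{\infty}$). Taking $\sigma=s'$, $R=R_J$ yields a base-class bound of order $2^{-J(s-s')/s'}\delta^{-1/s'}$, already exhibiting both the exponent and the $J$-factor of (\ref{bound_1}); taking $\sigma=s$, $R$ gives $\delta^{-1/s}$ for (\ref{bound_2}).

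Next I would account for $\theta$ by a product net. Cover $\Theta\subseteq\mathbb{R}^{b}$ by at most $M\eta^{-b}$ Euclidean balls of radius $\eta$ with centers $\theta_j$. Given $g$ in the base class and the nearest $\theta_j$, Assumption R(i) gives $\sup_v|\rho(v,\theta)-\rho(v,\theta_j)|\le L\eta^{\alpha}$, and the H\"{o}lder embedding $\mathsf{B}_\sigma\hookrightarrow C^{\sigma-1/2}$ (with H\"{o}lder seminorm controlled by $\|g\|_{\sigma,2}\le R$) yields $\sup_v|g(\rho(v,\theta))-g(\rho(v,\theta_j))|\le C R\,(L\eta^{\alpha})^{\sigma-1/2}$. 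Choosing $\eta$ so that the right-hand side is $\le\delta$ forces $\log(M\eta^{-b})$ to be at most a constant plus a multiple of $\log_{+}(R/\delta)$, which by $\log_{+} x\le c_\sigma x^{1/\sigma}$ is itself dominated by $(R/\delta)^{1/\sigma}$. Combining an internal $\delta$-net of the base class with this $\theta$-net and using the fixed-$\theta$ remark, the covering number of the composed class is bounded by the product of the two, so its logarithm is at most $C(R/\delta)^{1/\sigma}$ after absorbing the $\theta$-term. With $\delta\sim\varepsilon$ and $0<\varepsilon\le\varepsilon^{*}$ this gives (\ref{bound_1}) and (\ref{bound_2}), with $c^{*},c^{**}$ depending only on $s,s',\mathcal{F},\Theta,b,\alpha,L$ as claimed.

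Finally, the universal Donsker property is read off from these uniform entropy bounds. As $\|h\|_{L^{2}(Q)}\le\|h\|_{\mathsf{L}^{\infty}(\mathcal{V})}$ for every probability measure $Q$ on $\mathcal{V}$, the $\mathsf{L}^{\infty}$-covering numbers dominate the $L^{2}(Q)$-covering numbers uniformly in $Q$; since $s,s'>1/2$ the exponents $1/s$ and $1/s'$ are strictly below $2$, so the uniform entropy integral $\int_{0}^{1}\varepsilon^{-1/(2\sigma)}\,d\varepsilon$ converges. Both classes are uniformly bounded, hence have a constant envelope, and by continuity in $\theta$ (Assumption R(i)) together with separability of $\mathcal{F}$ they possess countable $\|\cdot\|_{\infty}$-dense subsets, which supplies the measurability needed for the uniform-entropy central limit theorem; this yields the universal Donsker conclusion. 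The principal obstacle is the sharp Besov-ball entropy exponent $1/\sigma$: a naive spline-truncation bound only gives the weaker exponent $1/(\sigma-1/2)$, so the finer multi-scale (Birman--Solomjak type) entropy estimate for $\mathsf{B}_\sigma\hookrightarrow\mathsf{L}^{\infty}$ is genuinely required and is the delicate quantitative ingredient. By comparison the $\theta$-direction is routine once its contribution is recognized as merely logarithmic.
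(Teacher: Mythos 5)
Your proposal is correct and follows essentially the same route as the paper: reduce to sup-norm entropy of the base class $\mathcal{G}_J$ on $[0,1]$ (composition with a fixed $\rho(\cdot,\theta)$ cannot increase it), use Proposition \ref{proj_error_2} to place $\mathcal{G}_J$ in a $\mathsf{B}_{s'}$-ball of radius $O(2^{-J(s-s')})$, invoke the sharp Besov-ball entropy bound with exponent $1/s'$ (the paper cites Theorem 15.6.1 of Lorentz, v.Golitschek and Makovoz (1996)), and absorb the $\Theta$-net, whose cardinality is only logarithmic thanks to Assumption R(i) and the H\"{o}lder embedding $\mathsf{B}_{s'}\hookrightarrow\mathsf{C}^{s'-1/2}$. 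The only (immaterial) deviation is the final Donsker step, where you pass from sup-norm entropy to uniform $\mathcal{L}^{2}(Q)$ entropy while the paper passes to bracketing numbers and applies Theorem 2.8.4 of van der Vaart and Wellner (1996); both hinge on the exponents $1/s,1/s'$ being strictly less than $2$.
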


\begin{proof}
Let $s^{\prime }$ be as in the proposition. By Proposition \ref{proj_error_2}%
\begin{equation}
\sup_{f\in \mathcal{F}}\left\Vert \pi _{J}^{(r)}(f)-f\right\Vert _{s^{\prime
},2}\leq 2^{-J(s-s^{\prime })}c_{s,s^{\prime }}^{\prime \prime \prime
}\sup_{f\in \mathcal{F}}\left\Vert f\right\Vert _{s,2}=2^{-J(s-s^{\prime
})}D<\infty ,  \label{sob_norm_res}
\end{equation}%
where the constant $D$ depends only on $s$, $s^{\prime }$, and $\mathcal{F}$%
. As a consequence, 
\begin{equation*}
\mathcal{G}_{J}:=\left\{ (\pi _{J}^{(r)}(f)-f):f\in \mathcal{F}\right\}
\end{equation*}%
is contained in a ball $\mathcal{U}_{J}$ in $\mathsf{B}_{s^{\prime }}$ of
radius $2^{-J(s-s^{\prime })}D$. Using entropy bounds for balls in Besov
spaces (e.g., Theorem 15.6.1 in Lorentz, v.Golitschek, and Makovoz (1996))
we obtain%
\begin{equation*}
\log N(\mathcal{G}_{J},\mathsf{L}^{\infty }([0,1]),\varepsilon )\leq
2^{-J(s-s^{\prime })/s^{\prime }}c(s,s^{\prime },\mathcal{F})\varepsilon
^{-1/s^{\prime }}\quad \text{\ for }0<\varepsilon <\infty
\end{equation*}%
where the finite and positive constant $c(s,s^{\prime },\mathcal{F})$
depends only on $s$, $s^{\prime }$, and $\mathcal{F}$ (in particular, it is
independent of $J$). [Setting $p=2$, $q=\infty $ in Lorentz, v.Golitschek,
and Makovoz (1996) we actually obtain the above bound only in the ess-sup
norm. However, since $\mathcal{G}_{J}$ consists of \textit{continuous}
functions only and since we can always assume that the centers of the
covering ess-sup norm balls belong to $\mathcal{G}_{J}$ (perhaps at the
expense of doubling $\varepsilon $), we immediately obtain the same bound
for the supremum-norm.]

To prove the entropy bound for $\mathcal{F}_{J,r,\rho }=\left\{ g(\rho
(\cdot ,\theta )):g\in \mathcal{G}_{J},\,\theta \in \Theta \right\} $ we
proceed as follows: Note that the elements of $\mathcal{G}_{J}$ are H\"{o}%
lder continuous of order $s^{\prime }-1/2$ with H\"{o}lder constants
uniformly bounded by $2^{-J(s-s^{\prime })}c_{1}(s^{\prime },D)$, with $%
0<c_{1}(s^{\prime },D)<\infty $ depending only on $s^{\prime }$ and $D$,
since $\mathcal{G}_{J}\subseteq \mathcal{U}_{J}\subseteq \mathsf{B}%
_{s^{\prime }}$ and since for $1/2<s^{\prime }<1$ the space $\mathsf{B}%
_{s^{\prime }}$ is continuously embedded into $\mathsf{C}^{s^{\prime }-1/2}$%
, cf. Proposition \ref{elem} in Appendix \ref{App_Spline}. Define $\eta
=(\alpha (s^{\prime }-1/2))^{-1}$ with $\alpha $ defined in Assumption R1.
For $0<\varepsilon \leq 1$ set $\delta =\left( 2^{J(s-s^{\prime
})}\varepsilon \right) ^{\eta }$ and cover $\Theta $ by $\delta $-balls with
centers $\theta _{1},\ldots ,\theta _{N(\delta ,\Theta )}$ where $N(\delta
,\Theta )$ satisfies $N(\delta ,\Theta )\leq \max (1,M(\Theta )/\delta ^{b})$
for some constant $M(\Theta )$ only depending on $\Theta $. Let $%
g_{1},\ldots ,g_{N(\mathcal{G}_{J},\mathsf{L}^{\infty }([0,1]),\varepsilon
)} $ be the centers of $\mathsf{L}^{\infty }([0,1])$-balls of radius $%
\varepsilon $ covering $\mathcal{G}_{j}$. We then have for $g(\rho (\cdot
,\theta ))\in \mathcal{F}_{J,r,\rho }$ using Assumption R1%
\begin{eqnarray*}
&&\sup_{v\in \mathcal{V}}\left\vert g(\rho (v,\theta ))-g_{i}(\rho (v,\theta
_{l})\right\vert \\
&\leq &\sup_{v\in \mathcal{V}}\left\vert g(\rho (v,\theta ))-g(\rho
(v,\theta _{l}))\right\vert +\sup_{v\in \mathcal{V}}\left\vert g(\rho
(v,\theta _{l}))-g_{i}(\rho (v,\theta _{l}))\right\vert \\
&\leq &2^{-J(s-s^{\prime })}c_{1}(s^{\prime },D)\left( L\left\vert \theta
-\theta _{l}\right\vert ^{\alpha }\right) ^{s^{\prime }-1/2}+\sup_{x\in
\lbrack 0,1]}\left\vert g(x)-g_{i}(x)\right\vert \leq \left( c_{1}(s^{\prime
},D)L^{1/\eta }+1\right) \varepsilon
\end{eqnarray*}%
for suitable choice of $i$ and $l$. Consequently, we obtain for $%
0<\varepsilon \leq 1$ 
\begin{eqnarray*}
&&\log N(\mathcal{F}_{J,r,\rho },\mathsf{L}^{\infty }(\mathcal{V}),\left(
c_{1}(s^{\prime },D)L^{1/\eta }+1\right) \varepsilon )\leq \log N(\mathcal{G}%
_{J},\mathsf{L}^{\infty }([0,1]),\varepsilon )+\log N(\delta ,\Theta ) \\
&\leq &c(s,s^{\prime },\mathcal{F})\left( 2^{J(s-s^{\prime })}\varepsilon
\right) ^{-1/s^{\prime }}+\log ^{+}\left( M(\Theta )/(2^{J(s-s^{\prime
})}\varepsilon )^{b\eta }\right) \leq c_{\bullet }2^{-J(s-s^{\prime
})/s^{\prime }}\varepsilon ^{-1/s^{\prime }},
\end{eqnarray*}%
for a suitable finite constant $c_{\bullet }$ only depending on $s$, $%
s^{\prime }$, $\mathcal{F}$, $\Theta $, $b$, and $\alpha $, but not on $J$.
After a simple substitution, this gives (\ref{bound_1}) for $0<\varepsilon
\leq c_{1}(s^{\prime },D)L^{1/\eta }+1$. Appropriately adjusting the
multiplicative constant in this so-obtained bound gives (\ref{bound_1}) for
all $0<\varepsilon \leq \varepsilon ^{\ast }$; note that the adjustment of
the constant only introduces an additional dependence on $\varepsilon ^{\ast
}$ (but no dependence on $J$). The entropy bound (\ref{bound_2}) for $%
\mathcal{F}_{\rho }$\ is proved in a similar (even simpler) way. The Donsker
property of $\mathcal{F}_{J,r,\rho }$ and $\mathcal{F}_{\rho }$ now follows
from (\ref{bound_1}), (\ref{bound_2}) and Theorem 2.8.4 in van der Vaart and
Wellner (1996), noting that $\mathcal{F}_{J,r,\rho }$ and $\mathcal{F}_{\rho
}$ are uniformly bounded in view of Proposition \ref{elem} and that the
bracketing covering numbers are dominated by the sup-norm covering numbers.
\end{proof}

\bigskip

An analogous result holds for the random (signed) measure $P_{n,j,r_{\ast
}}\ $given by $dP_{n,j,r_{\ast }}(y)=p_{n,j,r_{\ast }}(y)dy$. The proof of
this result is similar to, in fact simpler than, the proof of Theorem \ref%
{UCLT} and thus is omitted.

\begin{theorem}
\label{UCLT_2} Suppose $r_{\ast }\geq 2$, and $p_{0}\in \mathcal{B}_{t}$ for
some $t$, $0<t<r_{\ast }$. Let $\mathcal{F}$ be a (non-empty) bounded subset
of $\mathsf{B}_{s}$ for some $s$, $1/2<s<1$. Then for every $1/2<s^{\prime
}\leq s$ there is a finite positive constant $C_{10}$ independent of $j$
(only depending on $s$, $s^{\prime }$, $t$, $\mathcal{F}$, and $p_{0}$) such
that for every $j\geq 1$ and $k\geq 1$ 
\begin{equation*}
E\left\Vert P_{n,j,r_{\ast }}-P_{n}\right\Vert _{\mathcal{F}}\leq
C_{10}(2^{-j(t+s)}+2^{-j(s-s^{\prime })}n^{-1/2}).
\end{equation*}%
Furthermore, $\Vert P_{n}-P\Vert _{\mathcal{F}}=O_{p}(n^{-1/2})$ holds.
Finally, if $j_{n}\rightarrow \infty $ as $n\rightarrow \infty $ satisfies $%
2^{-j_{n}(t+s)}=o(n^{-1/2})$, then%
\begin{equation*}
\sqrt{n}\left( P_{n,j_{n},r_{\ast }}-P\right) \rightsquigarrow _{\ell
^{\infty }(\mathcal{F})}G_{P},
\end{equation*}%
where $G_{P}$ is a sample-bounded and sample-continuous generalized $P$%
-Brownian bridge indexed by $\mathcal{F}$.
\end{theorem}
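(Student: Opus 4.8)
The plan is to mimic the proof of Theorem \ref{UCLT}, exploiting the fact that here there is neither a supremum over $\theta$ nor a composition with a simulation map $\rho$, which removes the only genuinely delicate parts of that argument. First I would note that $\|P_{n,j,r_{\ast }}-P_{n}\|_{\mathcal{F}}$ and $\|P_{n}-P\|_{\mathcal{F}}$ are measurable, since $\mathcal{F}$, being a bounded subset of the separable space $\mathsf{B}_{s}$, contains a countable sup-norm dense subset. For $f\in \mathcal{F}$ I would then use the reproducing representation (\ref{proj_kern}) together with symmetry of the projection kernel $K_{j}^{(r_{\ast })}$ to write, exactly as in Theorem \ref{UCLT},
\begin{equation*}
(P_{n,j,r_{\ast }}-P_{n})(f)=\frac{1}{n}\sum_{i=1}^{n}\big(\pi _{j}^{(r_{\ast })}(f)-f\big)(X_{i})=(P_{n}-P)\big(\pi _{j}^{(r_{\ast })}(f)-f\big)+\int_{0}^{1}\big(\pi _{j}^{(r_{\ast })}(f)-f\big)p_{0}\,d\lambda =:A+B,
\end{equation*}
so that the whole argument reduces to bounding $\sup_{f\in \mathcal{F}}|A|$ and $\sup_{f\in \mathcal{F}}|B|$.

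Term $B$ I would control precisely as in (\ref{bound_B}): using self-adjointness and idempotency of the orthogonal projection $Id-\pi _{j}^{(r_{\ast })}$ together with Proposition \ref{proj_error} gives
\begin{equation*}
\sup_{f\in \mathcal{F}}|B|\leq \sup_{f\in \mathcal{F}}\|f-\pi _{j}^{(r_{\ast })}(f)\|_{2}\cdot \|p_{0}-\pi _{j}^{(r_{\ast })}(p_{0})\|_{2}\leq c_{s}^{\prime }c_{t}^{\prime }\Big(\sup_{f\in \mathcal{F}}\|f\|_{s,2}\Big)\|p_{0}\|_{t,2}\,2^{-j(t+s)},
\end{equation*}
which is finite because $\mathcal{F}$ is bounded in $\mathsf{B}_{s}$ and $p_{0}\in \mathcal{B}_{t}$; this yields the first summand. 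For term $A$ I would set $\mathcal{G}_{j}:=\{\pi _{j}^{(r_{\ast })}(f)-f:f\in \mathcal{F}\}$ and write $\sup_{f}|A|=n^{-1}\sup_{g\in \mathcal{G}_{j}}|\sum_{i=1}^{n}(g(X_{i})-Eg(X_{i}))|$. The point is that the envelope and entropy estimates needed for $\mathcal{G}_{j}$ are already established inside the proof of Proposition \ref{ent1} (with $r$ there replaced by $r_{\ast }$): $\mathcal{G}_{j}$ lies in a ball of radius $2^{-j(s-s^{\prime })}D$ in $\mathsf{B}_{s^{\prime }}$, so its sup-norm envelope is of order $2^{-j(s-s^{\prime })}$ by the embedding $\mathsf{B}_{s^{\prime }}\hookrightarrow \mathsf{C}([0,1])$ of Proposition \ref{elem}, and its covering numbers satisfy $\log N(\mathcal{G}_{j},\mathsf{L}^{\infty }([0,1]),\varepsilon )\leq 2^{-j(s-s^{\prime })/s^{\prime }}c\,\varepsilon ^{-1/s^{\prime }}$. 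Feeding these into the moment inequality of Proposition \ref{exp}(b) — with envelope and $\sigma$ of order $2^{-j(s-s^{\prime })}$ and $w=1/s^{\prime }$, exactly as in the treatment of term $A$ in Theorem \ref{UCLT} — gives $E\sup_{f}|A|\leq C\,2^{-j(s-s^{\prime })}n^{-1/2}$, the second summand. Combining the two bounds proves the rate estimate.

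For the remaining two claims I would argue as follows. The bound $\|P_{n}-P\|_{\mathcal{F}}=O_{p}(n^{-1/2})$ is immediate once $\mathcal{F}$ is known to be a universal Donsker class; and $\mathcal{F}$, being a bounded subset of $\mathsf{B}_{s}$ with $s>1/2$, has $\mathsf{L}^{\infty }$-metric entropy of order $\varepsilon ^{-1/s}$ with $1/s<2$ (the analogue of (\ref{bound_2}), obtained directly from the Besov entropy estimate cited in the proof of Proposition \ref{ent1}, with no $\rho$ present), so the entropy integral converges and $\sqrt{n}(P_{n}-P)\rightsquigarrow _{\ell ^{\infty }(\mathcal{F})}G_{P}$ by Theorem 2.8.4 in van der Vaart and Wellner (1996). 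Finally, choosing $s^{\prime }$ with $1/2<s^{\prime }<s$ and multiplying the rate estimate by $\sqrt{n}$ gives $\sqrt{n}\,E\|P_{n,j_{n},r_{\ast }}-P_{n}\|_{\mathcal{F}}\leq C_{10}(\sqrt{n}\,2^{-j_{n}(t+s)}+2^{-j_{n}(s-s^{\prime })})$, which tends to zero because $2^{-j_{n}(t+s)}=o(n^{-1/2})$ and $j_{n}\rightarrow \infty $; hence $\sqrt{n}(P_{n,j_{n},r_{\ast }}-P_{n})\rightarrow 0$ in probability in $\ell ^{\infty }(\mathcal{F})$, and the asserted functional central limit theorem for $\sqrt{n}(P_{n,j_{n},r_{\ast }}-P)$ follows by asymptotic equivalence. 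I do not expect a genuine main obstacle here: the simulation-free setting means that the only delicate step in Theorem \ref{UCLT}, namely controlling the $\theta$-indexed class $\mathcal{F}_{J,r,\rho }$, collapses to the already-available estimates for $\mathcal{G}_{j}$, and everything else is essentially a verbatim transcription of the earlier argument.
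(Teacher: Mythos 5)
Your proposal is correct and follows exactly the route the paper intends: the paper omits this proof, stating only that it is ``similar to, in fact simpler than'' that of Theorem \ref{UCLT}, and your argument is precisely that simplification — the same $A+B$ decomposition, the bound (\ref{bound_B}) for $B$, the entropy/moment-inequality treatment of $A$ via the class $\mathcal{G}_{j}$ from Proposition \ref{ent1} and Proposition \ref{exp}(b), and the same Donsker-plus-asymptotic-equivalence step for the functional CLT. No gaps; the only cosmetic quibble is that sup-norm separability of $\mathcal{F}$ is better justified by its finite sup-norm covering numbers (via the compact embedding of bounded sets of $\mathsf{B}_{s}$ into $\mathsf{C}([0,1])$) than by appeal to separability of $\mathsf{B}_{s}$ itself.
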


\appendix

\section{Appendix: Some Properties of Besov Spaces and Approximation by
Splines\label{App_Spline}}

In the following, we summarize some simple properties of the spaces $%
\mathcal{B}_{s}$. For $0<s\leq 1$ and bounded $f:[0,1]\rightarrow \mathbb{R}$
denote by 
\begin{equation*}
\left\Vert f\right\Vert _{s,\infty }=\left\Vert f\right\Vert _{\infty
}+\sup_{x,y\in \lbrack 0,1],x\neq y}\frac{\left\vert f(x)-f(y)\right\vert }{%
\left\vert x-y\right\vert ^{s}}
\end{equation*}%
the usual H\"{o}lder norm and denote by $\mathsf{C}^{s}$ the set of all
functions $f$ with finite $\Vert f\Vert _{s,\infty }$. For simplicity we
restrict ourselves to the case $s<1$ in the following proposition.

\begin{proposition}
\label{elem} Let $1/2<s<1$.

a. Every $f\in \mathcal{B}_{s}$ is $\lambda $-a.e. equal to a function $%
\tilde{f}\in \mathsf{C}^{s-1/2}$ and%
\begin{equation*}
\left\Vert \tilde{f}\right\Vert _{\infty }\leq \left\Vert \tilde{f}%
\right\Vert _{(s-1/2),\infty }\leq c_{s}\left\Vert \tilde{f}\right\Vert
_{s,2}=c_{s}\left\Vert f\right\Vert _{s,2}
\end{equation*}%
holds for some finite (positive) constant $c_{s}$ that depends only on $s$.

b. If $f\in \mathcal{B}_{s}$ and $h\in \mathcal{B}_{s}$, then $\left\Vert
fh\right\Vert _{s,2}\leq 2c_{s}\left\Vert f\right\Vert _{s,2}\left\Vert
h\right\Vert _{s,2}$. If $h\in \mathcal{B}_{s}$ satisfies $\zeta
:=\inf_{x\in \lbrack 0,1]}h(x)>0$, then $\left\Vert 1/h\right\Vert
_{s,2}\leq \zeta ^{-1}+\zeta ^{-2}\left\Vert h\right\Vert _{s,2}$.
\end{proposition}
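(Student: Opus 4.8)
Proposition \ref{elem} makes two claims: part (a) is an embedding/norm-equivalence statement, and part (b) consists of two algebra inequalities for the Besov seminorm. I would prove part (a) first and then lean on it for part (b).

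The plan for part (a) is as follows. First I would recall that the norm $\|\cdot\|_{s,2}$ in Definition \ref{intrinsic} is (for $1/2<s<1$, so $a=1$) the usual $\mathcal{B}_{2\infty}^s$ norm. The embedding $\mathcal{B}_{2\infty}^s \hookrightarrow \mathsf{C}^{s-1/2}$ for $s>1/2$ is a standard Besov embedding theorem (one-dimensional case, $\mathcal{B}_{2\infty}^s([0,1])\hookrightarrow \mathcal{B}_{\infty\infty}^{s-1/2}([0,1])=\mathsf{C}^{s-1/2}([0,1])$), and I would invoke it from DeVore and Lorentz (1993) rather than reprove it. This gives a continuous representative $\tilde f$ together with the bound $\|\tilde f\|_{(s-1/2),\infty}\le c_s\|f\|_{s,2}$ for a constant depending only on $s$. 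The first inequality $\|\tilde f\|_\infty\le\|\tilde f\|_{(s-1/2),\infty}$ is immediate from the definition of the H\"older norm, and the final equality $\|\tilde f\|_{s,2}=\|f\|_{s,2}$ holds because the seminorm depends only on the $\lambda$-equivalence class.

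For part (b), I would use part (a) as the workhorse. For the product bound, write $\|fh\|_{s,2}=\|fh\|_2+\sup_{0\neq|z|<1}|z|^{-s}\|\Delta_z(fh)\|_2$ and apply the Leibniz-type identity for the first difference, $\Delta_z(fh)(x)=f(x+z)\Delta_z h(x)+h(x)\Delta_z f(x)$. Taking $\mathcal{L}^2$ norms and using $\|\cdot\|_\infty\le c_s\|\cdot\|_{s,2}$ from part (a) to pull out sup-norm factors, each term is bounded by a product of a sup-norm factor and a difference-seminorm factor; collecting these and bounding $\|fh\|_2\le\|f\|_\infty\|h\|_2$ yields $\|fh\|_{s,2}\le 2c_s\|f\|_{s,2}\|h\|_{s,2}$ after absorbing constants (the factor $2$ and the symmetric handling of the two Leibniz terms is where one must be slightly careful to land on exactly the stated constant). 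For the reciprocal bound, use $\Delta_z(1/h)(x)=\frac{1}{h(x+z)}-\frac{1}{h(x)}=-\frac{\Delta_z h(x)}{h(x+z)h(x)}$, so that $\|1/h\|_2\le\zeta^{-1}$ and $|z|^{-s}\|\Delta_z(1/h)\|_2\le\zeta^{-2}|z|^{-s}\|\Delta_z h\|_2\le\zeta^{-2}\|h\|_{s,2}$, giving $\|1/h\|_{s,2}\le\zeta^{-1}+\zeta^{-2}\|h\|_{s,2}$ directly.

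The main obstacle is really part (a): the Besov embedding into a H\"older space is the one genuinely nonelementary input, and the cleanest route is to cite it. I would also be careful about the boundary effects on $[0,1]$ (the difference operator is truncated to zero when $x$ or $x+az$ leaves $[0,1]$, per the definition preceding Definition \ref{intrinsic}), but since the Leibniz identities for the \emph{first} difference are pointwise and the truncation only sets some contributions to zero, the inequalities survive verbatim. Part (b) is then routine once part (a) supplies the crucial $\mathcal{L}^\infty$ control of $\mathcal{B}_s$ functions.
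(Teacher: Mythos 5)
Your proposal is correct and follows essentially the same route as the paper: part (a) is obtained by citing a standard Besov--H\"older embedding (the paper identifies $\mathcal{B}_{s}$ with the interpolation space $(\mathcal{L}^{2},\mathcal{W}_{2}^{1})_{s,\infty}$ via DeVore and Lorentz and then invokes Theorem 7.37 of Adams and Fournier, rather than citing the embedding directly from DeVore and Lorentz, but this is the same idea), and part (b) is done exactly as you describe, via the first-difference Leibniz identity with $a=1$ together with the sup-norm control from part (a), and the identity $\Delta_{z}(1/h)=-\Delta_{z}h/(h(\cdot+z)h(\cdot))$ for the reciprocal.
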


\begin{proof}
a. Observe that $\mathcal{B}_{s}$ coincides (up to norm equivalence) with
the intermediate space $(\mathcal{L}^{2},\mathcal{W}_{2}^{1})_{s,\infty }$
(DeVore and Lorentz (1993), p.196) and hence coincides with the Besov space $%
\mathcal{B}^{s;2,\infty }((0,1))$ defined in Adams and Fournier (2003) (the
fact that the latter is defined on the open unit interval being irrelevant).
The claim then follows from applying Theorem 7.37 in Adams and Fournier
(2003) (with $m=n=1$, $j=0$, $p=2$, $q=\infty $).

b. Since $s<1$ by assumption, we may set $a=1$ in the definition of the
Besov (semi)norm. Elementary calculations then show that 
\begin{equation*}
\left\Vert fh\right\Vert _{s,2}\leq \left\Vert f\right\Vert _{s,2}\limfunc{%
esssup}\left\vert h\right\vert +\left\Vert h\right\Vert _{s,2}\limfunc{esssup%
}\left\vert f\right\vert \leq 2c_{s}\left\Vert f\right\Vert _{s,2}\left\Vert
h\right\Vert _{s,2}
\end{equation*}%
in view of Part a. The second claim follows since clearly $\left\Vert
1/h\right\Vert _{2}\leq \zeta ^{-1}$ and since elementary calculations give $%
\left\Vert \Delta _{z}h^{-1}\right\Vert _{2}\leq \zeta ^{-2}\left\Vert
\Delta _{z}h\right\Vert _{2}$.
\end{proof}

\bigskip

The above proposition, together with the continuous embedding of $\mathcal{B}%
_{t}$ into $\mathcal{B}_{s}$ for $t\geq s$ (DeVore and Lorentz (1993),
p.56), immediately guarantees for \textit{every} $t>1/2$ the existence of a
constant $c_{t}$, $0<c_{t}<\infty $, such that for every $f\in \mathcal{B}%
_{t}$ there exists a (unique) continuous $\tilde{f}$, $\lambda $-a.e. equal
to $f$, such that $\Vert \tilde{f}\Vert _{\infty }\leq c_{t}\Vert \tilde{f}%
\Vert _{t,2}=c_{t}\Vert f\Vert _{t,2}$. In particular, bounded subsets of $%
\mathsf{B}_{t}$, $t>1/2$, are sup-norm bounded.

As is well known, functions in $\mathcal{B}_{s}$ can be approximated by
elements of the Schoenberg spaces $\mathcal{S}_{j}(r)$, the error decreasing
as $j$ increases. We summarize these facts in the following proposition.

\begin{proposition}
\label{proj_error} Suppose $r\in \mathbb{N}$.

a. If $h\in \mathcal{L}^{2}$, then the ortho-projection operator $\pi
_{j}^{(r)}$ from $\mathcal{L}^{2}$ onto the Schoenberg space $\mathcal{S}%
_{j}(r)$ satisfies%
\begin{equation*}
\lim_{j\rightarrow \infty }\Vert \pi _{j}^{(r)}(h)-h\Vert _{2}=0.
\end{equation*}%
If $\mathcal{H}$ is a relatively compact subset of $\mathcal{L}^{2}$, then 
\begin{equation*}
\lim_{j\rightarrow \infty }\sup_{h\in \mathcal{H}}\Vert \pi
_{j}^{(r)}(h)-h\Vert _{2}=0.
\end{equation*}

b. If $h\in \mathcal{B}_{s}$ for some $s\in (0,r)$, then%
\begin{equation*}
\Vert \pi _{j}^{(r)}(h)-h\Vert _{2}\leq 2^{-js}c_{s}^{\prime }\Vert h\Vert
_{s,2},
\end{equation*}%
for every $j\in \mathbb{N}$, where the (positive) finite constant $%
c_{s}^{\prime }$ depends only on $s$.
\end{proposition}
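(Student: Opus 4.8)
The plan is to establish Part b first and then deduce Part a from it by soft arguments. The starting observation is that, since $\pi_j^{(r)}$ is the orthogonal projection onto $\mathcal{S}_j(r)$,
\[
\Vert \pi_j^{(r)}(h)-h\Vert_2=\inf_{S\in \mathcal{S}_j(r)}\Vert h-S\Vert_2=:E_{j,r}(h)_2
\]
is the error of best $\mathcal{L}^2$-approximation of $h$ by splines of order $r$ on the uniform dyadic partition of mesh $2^{-j}$, and that $Id-\pi_j^{(r)}$, being again an orthogonal projection, satisfies $\Vert Id-\pi_j^{(r)}\Vert_{op}\le 1$.

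For Part b the essential input is the classical Jackson (direct) estimate for spline approximation on a uniform partition (DeVore and Lorentz (1993), Chapter 12): with $a$ the smallest integer exceeding $s$ — so that $a\le r$, because $s<r$ forces $r\ge a$ — one has
\[
E_{j,r}(h)_2\le C\,\omega_a(h,2^{-j})_2,
\]
where $\omega_a(\cdot,t)_2$ denotes the $a$-th order $\mathcal{L}^2$-modulus of smoothness and $C$ is independent of $j$ and $h$. The second ingredient is that the Besov seminorm of Definition \ref{intrinsic} dominates this modulus: straight from the definition, $\Vert \Delta_z^a(h)\Vert_2\le |z|^s\Vert h\Vert_{s,2}$ for $0\neq|z|<1$, whence $\omega_a(h,t)_2\le t^s\Vert h\Vert_{s,2}$ for $0<t<1$. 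Evaluating at $t=2^{-j}$ and combining the two displays yields $E_{j,r}(h)_2\le 2^{-js}c_s'\Vert h\Vert_{s,2}$ with $c_s'$ independent of $j$ and $h$, which is exactly Part b.

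For the first claim of Part a it suffices, by the identity above, to show $E_{j,r}(h)_2\to 0$ for every $h\in \mathcal{L}^2$. Fixing some $s$ with $0<s<r$ and $s\le 1$, and given $\varepsilon>0$, I approximate $h$ by a smooth $g$ with $\Vert h-g\Vert_2<\varepsilon$ (smooth functions being dense in $\mathcal{L}^2$); such a $g$ lies in $\mathcal{B}_s$ by (\ref{sob}) and the Besov embedding noted after Proposition \ref{elem}. Contractivity of $Id-\pi_j^{(r)}$ together with Part b then gives
\[
\Vert \pi_j^{(r)}(h)-h\Vert_2\le \Vert h-g\Vert_2+\Vert \pi_j^{(r)}(g)-g\Vert_2\le \varepsilon+2^{-js}c_s'\Vert g\Vert_{s,2},
\]
and letting $j\to\infty$ and then $\varepsilon\to 0$ proves the claim. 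The uniform version over a relatively compact $\mathcal{H}$ is the standard upgrade from pointwise to uniform convergence on a totally bounded set: writing $T_j:=Id-\pi_j^{(r)}$ we have $\Vert T_j\Vert_{op}\le 1$ and $\Vert T_j h\Vert_2\to 0$ pointwise; covering the totally bounded set $\mathcal{H}$ by finitely many $\varepsilon$-balls with centers $h_1,\dots,h_N$, and using $\Vert T_j h\Vert_2\le \Vert h-h_i\Vert_2+\Vert T_j h_i\Vert_2$ for a nearest center $h_i$, gives $\sup_{h\in\mathcal{H}}\Vert T_j h\Vert_2<2\varepsilon$ for $j$ large.

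The one genuinely non-trivial step is the Jackson estimate invoked in Part b: one must match the order $a$ of the modulus of smoothness to the spline order $r$ (which is where the restriction $s<r$ enters) and treat correctly the boundary intervals of $[0,1]$, on which the differences $\Delta_z^a$ are truncated exactly as in Definition \ref{intrinsic}. This is available in DeVore and Lorentz (1993); by contrast, the density argument for the first part of Part a and the passage to uniformity over compacta are routine.
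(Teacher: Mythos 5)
Your proof is correct, and for Part b and for the uniform statement over $\mathcal{H}$ it is essentially the paper's own argument: the paper likewise derives Part b from the DeVore--Lorentz spline Jackson bound (Proposition 2.4.1 and (12.3.2) there, applied with $p=2$, $n=2^{j}$) combined with Definition \ref{intrinsic}, and it proves the second claim of Part a by precisely your $\varepsilon$-net plus contraction argument. The genuine difference is in the first claim of Part a. The paper applies the Jackson-type bound $\Vert \pi _{j}^{(r)}(h)-h\Vert _{2}\leq 2C^{(r)}\sup_{0<z\leq 2^{-j}}\Vert \Delta _{z}^{r}(h)\Vert _{2}$ directly to an arbitrary $h\in \mathcal{L}^{2}$ and then invokes continuity of translation in $\mathcal{L}^{2}(\mathbb{R})$ (Folland (1999), Proposition 8.5) to conclude that the right-hand side vanishes as $j\rightarrow \infty $; you instead make the claim a corollary of Part b, approximating $h$ in $\mathcal{L}^{2}$ by a smooth $g$ (which lies in $\mathcal{B}_{s}$ for $s\leq 1$ by (\ref{sob}) and the embedding noted after Proposition \ref{elem}) and using $\Vert Id-\pi _{j}^{(r)}\Vert _{op}\leq 1$. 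Both routes rest on the same key lemma and on equally standard soft facts (translation continuity versus density of smooth functions), so neither is materially harder; yours has the structural appeal of deducing Part a from Part b, while the paper's treats $h\in \mathcal{L}^{2}$ without any detour through Besov spaces. A minor point in your favour: you state explicitly that the order $a$ of the difference operator in Definition \ref{intrinsic} satisfies $a\leq r$ precisely because $s<r$, and you invoke the Jackson estimate at that order, whereas the paper's one-line proof of Part b, phrased in terms of $r$-th differences, leaves the (standard) domination of the $r$-th modulus of smoothness by the $a$-th one implicit.
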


\begin{proof}
To prove the first claim in Part a, observe that by Proposition 2.4.1 and
(12.3.2) in DeVore and Lorentz (1993) 
\begin{equation*}
\Vert \pi _{j}^{(r)}(h)-h\Vert _{2}\leq 2C^{(r)}\sup_{0<z\leq 2^{-j}}\Vert
\Delta _{z}^{r}(h)\Vert _{2}
\end{equation*}%
for some universal constant $C^{(r)}$. By continuity of translation in $%
\mathcal{L}^{2}(\mathbb{R})$ (cf., e.g., Folland (1999), Proposition 8.5)
the right-hand side converges to zero as $j\rightarrow \infty $ (note that $%
\Vert \Delta _{z}^{r}(h)\Vert _{2}$ is less than or equal to the
corresponding expression that is obtained when $h$ is viewed as a function
on $\mathbb{R}$ which is zero outside of $[0,1]$). The second claim in Part
a follows since for every $\varepsilon >0$ and $\varepsilon $-net $\left\{
h_{l}:1\leq l\leq N(\varepsilon )\right\} $ for $\mathcal{H}$ we have that $%
\Vert h-h_{l}\Vert _{2}\leq \varepsilon $ implies $\Vert \pi
_{j}^{(r)}(h)-\pi _{j}^{(r)}(h_{l})\Vert _{2}\leq \varepsilon $ and thus%
\begin{equation*}
\sup_{h\in \mathcal{H}}\Vert \pi _{j}^{(r)}(h)-h\Vert _{2}\leq \max_{1\leq
l\leq N(\varepsilon )}\Vert \pi _{j}^{(r)}(h_{l})-h_{l}\Vert
_{2}+2\varepsilon
\end{equation*}%
holds. For the proof of Part b use Proposition 2.4.1 and (12.3.2) in DeVore
and Lorentz (1993) (where one sets $p=2$, $n=2^{j}$) together with the
definition of the Besov-norm.
\end{proof}

\begin{proposition}
\label{proj_error_2} Suppose $r\in \mathbb{N}$. Let $h\in \mathcal{B}_{s}$
for some $s\in (0,r-1/2)$. Then%
\begin{equation*}
\Vert \pi _{j}^{(r)}(h)\Vert _{s,2}\leq c_{s}^{\prime \prime }\Vert h\Vert
_{s,2},
\end{equation*}%
for every $j\in \mathbb{N}$, where the (positive) finite constant $%
c_{s}^{\prime \prime }$ depends only on $s$. Furthermore, for every $%
s^{\prime }\in (0,s]$ 
\begin{equation*}
\Vert \pi _{j}^{(r)}(h)-h\Vert _{s^{\prime },2}\leq 2^{-j(s-s^{\prime
})}c_{s,s^{\prime }}^{\prime \prime \prime }\Vert h\Vert _{s,2}
\end{equation*}%
for every $j\in \mathbb{N}$, where the (positive) finite constant $%
c_{s,s^{\prime }}^{\prime \prime \prime }$ depends only on $s$ and $%
s^{\prime }$.
\end{proposition}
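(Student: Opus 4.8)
The plan is to derive both estimates from the Jackson (direct) bound already recorded in Proposition \ref{proj_error}, a matching Bernstein (inverse) bound for splines, and the orthogonal multiresolution structure of the nested Schoenberg spaces. First I would assemble the ingredients. Since the knots are dyadic, $\mathcal{S}_i(r)\subseteq\mathcal{S}_{i+1}(r)$, so the projections $\pi_i^{(r)}$ are nested and the detail projections $\Delta_i:=\pi_{i+1}^{(r)}-\pi_i^{(r)}$ are mutually orthogonal; by the first part of Proposition \ref{proj_error} one has the $\mathcal{L}^2$-expansion $h=\pi_0^{(r)}(h)+\sum_{i\ge 0}\Delta_i h$, and the second part gives $\|\Delta_i h\|_2\le\|\pi_{i+1}^{(r)}h-h\|_2+\|h-\pi_i^{(r)}h\|_2\le C2^{-is}\|h\|_{s,2}$. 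The second ingredient is the Bernstein inequality $\|f^{(a)}\|_2\le C2^{ja}\|f\|_2$ for $f\in\mathcal{S}_j(r)$, obtained by combining the derivative bounds (\ref{L_2_deriv}), (\ref{L_2_deriv2}) of Lemma \ref{basic} with the reverse of the stability inequality (\ref{L_2}), namely $(\sum_l\alpha_l^2)^{1/2}\le C2^{j/2}\|f\|_2$, which is likewise contained in DeVore and Lorentz (1993), Theorem 5.4.2.

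The core is a single synthesis estimate that yields the stronger (second) claim for the whole range $s'\in(0,s]$. Writing $e_j:=h-\pi_j^{(r)}(h)=\sum_{i\ge j}\Delta_i h$ in $\mathcal{L}^2$, the $\mathcal{L}^2$-part of $\|e_j\|_{s',2}$ is controlled directly by Jackson. For the seminorm I fix $z$ with $0<|z|<1$, let $a$ be the smallest integer exceeding $s$, pick $N$ with $2^{-N}\sim|z|$, and split $e_j$ into a low-frequency block ($j\le i\le N$) and a high-frequency block ($i>N$), which is legitimate because $\Delta_z^a$ is bounded on $\mathcal{L}^2$ and hence commutes with the $\mathcal{L}^2$-limit. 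On the high-frequency block I use $\|\Delta_z^a(\cdot)\|_2\le 2^a\|\cdot\|_2$, orthogonality, and the geometric decay of $\|\Delta_i h\|_2$ to get $\le C2^{-Ns}\|h\|_{s,2}$. On the low-frequency block I use the smoothness of the spline components, $\|\Delta_z^a(\Delta_i h)\|_2\le|z|^a\|(\Delta_i h)^{(a)}\|_2\le C|z|^a2^{ia}\|\Delta_i h\|_2$, and sum $\sum_{i\le N}2^{i(a-s)}\le C2^{N(a-s)}$ (convergent since $a>s$), which again yields $\le C2^{-Ns}\|h\|_{s,2}$. Multiplying by $|z|^{-s'}\sim 2^{Ns'}$ and taking the supremum produces the factor $2^{-N(s-s')}\le 2^{-j(s-s')}$ when $N\ge j$, while the complementary range $|z|\gtrsim 2^{-j}$ is handled by treating all levels as high-frequency. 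Collecting the pieces gives $\|e_j\|_{s',2}\le c_{s,s'}'''2^{-j(s-s')}\|h\|_{s,2}$, the second claim.

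The first claim is then immediate: taking $s'=s$ and applying the triangle inequality, $\|\pi_j^{(r)}(h)\|_{s,2}\le\|e_j\|_{s,2}+\|h\|_{s,2}\le(c_{s,s}'''+1)\|h\|_{s,2}$, so $c_s''=c_{s,s}'''+1$. The hard part is the synthesis estimate of the second paragraph: balancing the two scale-regimes so that the smooth low-frequency block and the rough high-frequency block each contribute the same power $2^{-Ns}$, and keeping track of which order $a$ of differences — and hence which order of the Bernstein inequality — is admissible under $s<r-1/2$. For the values $s\in(1/2,1]$, $s'\in(0,s)$ that actually occur in the applications (Corollary \ref{consistency} and Theorem \ref{UCLT}) one may take $a=1$, or $a=2$ at $s=1$, so only the first- and second-derivative bounds of Lemma \ref{basic} are required; for general $s\in(0,r-1/2)$ the argument goes through verbatim with $a$-th order differences and the $a$-th order Bernstein inequality, the latter obtained by iterating the derivative formula (\ref{deriv}) exactly as in the proof of Lemma \ref{basic}.
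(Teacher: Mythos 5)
Your route is genuinely different from the paper's: the paper does not build a Littlewood--Paley decomposition at all, but instead invokes the approximation-space characterization of $\mathcal{B}_{s}$ (DeVore and Lorentz (1993), Theorem 12.3.3 with $p=2$, $\lambda=r-1/2$), combined with the nestedness identity $\pi_{n}^{(r)}\pi_{j}^{(r)}=\pi_{n}^{(r)}$ for $n\leq j$ and the Jackson bound of Proposition \ref{proj_error}. Your multiresolution argument is a classical alternative, and your high-frequency block, the balancing at $2^{-N}\sim |z|$, and the deduction of the first claim from the second are all fine. The gap is in the low-frequency block, and it sits exactly in the regime that makes the proposition sharp: the Bernstein inequality you propose, $\Vert f^{(a)}\Vert _{2}\leq C2^{ja}\Vert f\Vert _{2}$ for $f\in \mathcal{S}_{j}(r)$, is only available for $a\leq r-1$, since an order-$r$ spline is a piecewise polynomial of degree $r-1$ whose $(r-1)$-th weak derivative is piecewise constant; its $r$-th derivative is a sum of point masses at the knots, not an $\mathcal{L}^{2}$ function, and iterating (\ref{deriv}) terminates at $N^{(1)}$. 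Since the seminorm uses differences of order $a=$ smallest integer larger than $s$, your estimate $\Vert \Delta _{z}^{a}(\Delta _{i}h)\Vert _{2}\leq |z|^{a}\Vert (\Delta _{i}h)^{(a)}\Vert _{2}$ is unavailable whenever $s\in \lbrack r-1,r-1/2)$, where $a=r$. Substituting the best admissible derivative order $m=r-1$ gives only $\Vert \Delta _{z}^{a}(\Delta _{i}h)\Vert _{2}\leq C|z|^{r-1}2^{i(r-1-s)}2^{-is}\cdot 2^{is}\Vert \Delta_i h\Vert_2$-type bounds, and the sum $\sum_{i\leq N}2^{i(r-1-s)}$ then fails to produce $2^{-Ns}$: at $s=r-1$ it produces an extra factor $N$, and for $s>r-1$ it saturates at $2^{-N(r-1)}$, which is much larger than $2^{-Ns}$. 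So your closing claim that "for general $s\in (0,r-1/2)$ the argument goes through verbatim" is false.

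This is not merely a boundary pathology of the statement; it hits the paper's own use of the proposition. Corollary \ref{consistency} permits $t=1$ with $r=2$ (and the proofs of Propositions \ref{consist} and \ref{cons} apply it with $t=\delta \wedge \tau \wedge 1$, which equals $1$ once $\delta ,\tau \geq 1$), and its proof invokes the present proposition with $s=t=1$, $r=2$. There $a=2$, and your fallback "only the first- and second-derivative bounds of Lemma \ref{basic} are required" does not work: (\ref{L_2_deriv2}) is stated for $r\geq 3$, and second derivatives of piecewise linear splines simply do not exist in $\mathcal{L}^{2}$. The reason the proposition is true up to $r-1/2$ rather than $r-1$ is the extra half order of $\mathcal{L}^{2}$-smoothness that splines possess: if $g$ is piecewise constant on a grid of mesh $2^{-i}$ and $|z|\leq 2^{-i}$, then $\Vert \Delta _{z}g\Vert _{2}\leq 2(|z|2^{i})^{1/2}\Vert g\Vert _{2}$, and applying this to $g=f^{(r-1)}$ yields the sharp Bernstein bound $\sup_{0\neq |z|<1}|z|^{-\sigma }\Vert \Delta _{z}^{a}f\Vert _{2}\leq C2^{i\sigma }\Vert f\Vert _{2}$ for $f\in \mathcal{S}_{i}(r)$ and all $\sigma \leq r-1/2$; this is in substance DeVore and Lorentz (1993), Chapter 12, Lemma 3.1, already quoted in Section \ref{splines}. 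If you replace your derivative-based Bernstein inequality by this one, choosing $\sigma \in (s,r-1/2]$ so that the low-frequency sum $\sum_{i\leq N}2^{i(\sigma -s)}$ is geometric, your decomposition argument does cover the whole range $s\in (0,r-1/2)$ including $s^{\prime }=s$; alternatively, one can simply cite Theorem 12.3.3 there, which is what the paper does.
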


\begin{proof}
By Theorem 12.3.3. in DeVore and Lorentz (1993) (with $p=2$, $\lambda =r-1/2$%
, $q=\infty $, $\alpha =s$, and $d_{n,r}(\cdot )_{2}$ defined on p.358 of
that reference) we have 
\begin{eqnarray*}
\Vert \pi _{j}^{(r)}(h)\Vert _{s,2} &=&\Vert \pi _{j}^{(r)}(h)\Vert
_{2}+\sup_{0\neq |z|<1}|z|^{-s}\Vert \Delta _{z}^{r}(\pi _{j}^{(r)}(h))\Vert
_{2} \\
&\leq &\Vert h\Vert _{2}+e_{s}\sup_{n\geq 0}2^{ns}d_{n,r}(\pi
_{j}^{(r)}(h))_{2} \\
&\leq &\Vert h\Vert _{2}+e_{s}\sup_{n\geq 0}2^{ns}\Vert \pi _{n}^{(r)}(\pi
_{j}^{(r)}(h))-\pi _{j}^{(r)}(h)\Vert _{2} \\
&\leq &\Vert h\Vert _{2}+e_{s}\sup_{0\leq n<j}2^{ns}\Vert \pi
_{n}^{(r)}(h)-\pi _{j}^{(r)}(h)\Vert _{2}\leq \Vert h\Vert
_{s,2}+2e_{s}c_{s}^{\prime }\Vert h\Vert _{s,2}
\end{eqnarray*}%
for some universal constant $e_{s}$, where we have used Proposition \ref%
{proj_error} in the last step. To prove the second claim we argue as before
and then use Proposition \ref{proj_error} to obtain%
\begin{eqnarray*}
\Vert \pi _{j}^{(r)}(h)-h\Vert _{s^{\prime },2} &\leq &\Vert \pi
_{j}^{(r)}(h)-h\Vert _{2}+e_{s^{\prime }}\sup_{n\geq 0}2^{ns^{\prime }}\Vert
\pi _{n}^{(r)}(\pi _{j}^{(r)}(h)-h)-(\pi _{j}^{(r)}(h)-h)\Vert _{2} \\
&\leq &\Vert \pi _{j}^{(r)}(h)-h\Vert _{2}+e_{s^{\prime }}\left[
2^{js^{\prime }}\Vert \pi _{j}^{(r)}(h)-h\Vert _{2}+\sup_{n>j}2^{ns^{\prime
}}\Vert \pi _{n}^{(r)}(h)-h\Vert _{2}\right] \\
&\leq &2^{-js}c_{s}^{\prime }\Vert h\Vert _{s,2}+e_{s^{\prime }}\left[
2^{j(s^{\prime }-s)}c_{s}^{\prime }\Vert h\Vert
_{s,2}+\sup_{n>j}2^{n(s^{\prime }-s)}c_{s}^{\prime }\Vert h\Vert _{s,2}%
\right] \\
&\leq &2^{-j(s-s^{\prime })}(1+2e_{s^{\prime }})c_{s}^{\prime }\Vert h\Vert
_{s,2}.
\end{eqnarray*}
\end{proof}

\section{Appendix: Consistency of the Indirect Inference Estimator and
Measurability Issues\label{App_Meas}}

\begin{proof}[Proof of Proposition \protect\ref{exist}]
Because of continuity of the B-spline basis functions for $r\geq 2$ and
continuity of $\theta \rightarrow \rho (v,\theta )$ for every $v\in \mathcal{%
V}$, the map $\theta \rightarrow p_{k,J,r}(\theta )(y)$ is continuous for
every $y\in \lbrack 0,1]$. Furthermore, $p_{n,j,r_{\ast }}$ and $%
p_{k,J,r}(\theta )$ are bounded on $[0,1]$, the latter one uniformly in $%
\theta $, in view of the discussion surrounding (\ref{sup-norm_estim}). Next
note that the set $A_{n}$ appearing in the definition of $\mathcal{Q}_{n,k}$
coincides with the event $\left\{ \inf_{y\in \lbrack 0,1]}p_{n,j,r_{\ast
}}(y)>0\right\} $, since $p_{n,j,r_{\ast }}$ is continuous on $[0,1]$ in
case $r_{\ast }>1$, and is piecewise constant in case $r_{\ast }=1$. Hence,
by the dominated convergence theorem, $\mathcal{Q}_{n,k}$ is continuous (and
real-valued) on $\Theta $ if $p_{n,j,r_{\ast }}(y)>0$ for every $y\in
\lbrack 0,1]$; and the same conclusion trivially holds in the other case. As
mentioned before, $\mathcal{Q}_{n,k}(\theta ):[0,1]^{\infty }\times \mathcal{%
V}^{\infty }\rightarrow \mathbb{R}$ is $\mathfrak{B}_{[0,1]}^{\infty
}\otimes \mathfrak{V}^{\infty }$-measurable for every $\theta \in \Theta $.
Since $\Theta $ is compact, existence of a measurable minimizer then
follows, e.g., from Lemma A3 in P\"{o}tscher and Prucha (1997).
\end{proof}

\begin{proposition}
\label{exist_1} Suppose $\Theta $ is compact in $\mathbb{R}^{b}$, that the
map $\theta \rightarrow p(\theta ,x)$ is continuous on $\Theta $ for every $%
x\in \lbrack 0,1]$ and that $\sup_{\theta \in \Theta }\left\Vert p(\theta
)\right\Vert _{\infty }<\infty $. Furthermore, assume that $r_{\ast }\geq 1$
holds. Then there exists a $\mathfrak{B}_{[0,1]}^{\infty }\otimes \mathfrak{V%
}^{\infty }$-measurable $\hat{\theta}_{n}$ that minimizes $Q_{n}(\theta )$
over $\Theta $. (In fact, $\hat{\theta}_{n}$ is $\mathfrak{B}%
_{[0,1]}^{\infty }$-measurable as it does not depend on the simulations.)
\end{proposition}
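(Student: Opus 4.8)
The plan is to mimic the proof of Proposition \ref{exist}, the only differences being that the simpler objective function $Q_{n}$ from (\ref{obj_inter}) replaces $\mathcal{Q}_{n,k}$ (so that the simulated estimator $p_{k,J,r}(\theta )$ is replaced by the \emph{nonrandom} parametric density $p(\theta )$), and that we wish to certify measurability with respect to the smaller $\sigma $-algebra $\mathfrak{B}_{[0,1]}^{\infty }$. First I would record that $p_{n,j,r_{\ast }}$ depends only on $X_{1},\ldots ,X_{n}$, hence is $\mathfrak{B}_{[0,1]}^{\infty }$-measurable (in fact it depends only on finitely many coordinates) and does not involve the simulation variables $V_{i}$ at all. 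Consequently every quantity entering $Q_{n}$ is $\mathfrak{B}_{[0,1]}^{\infty }$-measurable, which will yield the parenthetical claim for free once a measurable minimizer has been produced.

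The key step is to establish the two ingredients needed for a measurable-selection theorem: (i) $Q_{n}(\cdot ,\omega )$ is continuous on $\Theta $ for each fixed realization of the data, and (ii) $Q_{n}(\theta ,\cdot )$ is $\mathfrak{B}_{[0,1]}^{\infty }$-measurable for each fixed $\theta $. For (i), I would note that the event $A_{n}$ does not depend on $\theta $, so it suffices to treat the two defining cases separately. On the complement of $A_{n}$ the function $Q_{n}(\cdot ,\omega )\equiv 0$ is trivially continuous. On $A_{n}$ I would use that $A_{n}$ coincides with $\left\{ \inf_{y\in [0,1]}p_{n,j,r_{\ast }}(y)>0\right\} $, which holds because $p_{n,j,r_{\ast }}$ is continuous for $r_{\ast }\geq 2$ and piecewise constant for $r_{\ast }=1$, so its infimum over the compact interval $[0,1]$ is attained and is positive exactly when $p_{n,j,r_{\ast }}>0$ everywhere. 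Hence on $A_{n}$ the factor $p_{n,j,r_{\ast }}^{-1}$ is bounded, and, together with the uniform bound $\sup_{\theta \in \Theta }\left\Vert p(\theta )\right\Vert _{\infty }<\infty $ and the finiteness of $\left\Vert p_{n,j,r_{\ast }}\right\Vert _{\infty }$ (cf. the discussion surrounding (\ref{sup-norm_estim})), the integrand $(p_{n,j,r_{\ast }}-p(\theta ))^{2}p_{n,j,r_{\ast }}^{-1}$ is dominated uniformly in $\theta $ by a constant. Since $\theta \mapsto p(\theta ,x)$ is continuous for every $x$, the dominated convergence theorem yields continuity of $\theta \mapsto \int_{0}^{1}(p_{n,j,r_{\ast }}-p(\theta ))^{2}p_{n,j,r_{\ast }}^{-1}d\lambda $, establishing (i).

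For (ii), I would argue exactly as in the paragraph following (\ref{objective}): $p_{n,j,r_{\ast }}$ and $p(\theta )$ are jointly measurable in the data and the argument $y$, the set $A_{n}$ is measurable, and so by Tonelli's Theorem the integral, and hence $Q_{n}(\theta )$, is $\mathfrak{B}_{[0,1]}^{\infty }$-measurable for every fixed $\theta $. With (i) and (ii) in hand, and using compactness of $\Theta $ (which guarantees that the infimum of the continuous function $Q_{n}(\cdot ,\omega )$ is attained), the existence of a $\mathfrak{B}_{[0,1]}^{\infty }$-measurable minimizer $\hat{\theta}_{n}$ follows from a standard measurable-selection result, e.g. Lemma A3 in P\"{o}tscher and Prucha (1997).

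I do not anticipate a serious obstacle, since the argument is essentially the one already given for Proposition \ref{exist}. The only points requiring mild care are verifying that the reduction $A_{n}=\left\{ \inf_{y}p_{n,j,r_{\ast }}(y)>0\right\} $ remains valid in the borderline case $r_{\ast }=1$ (where $p_{n,j,r_{\ast }}$ is only piecewise constant rather than continuous), and keeping track of the fact that none of the quantities involved depend on the simulation variables $V_{i}$, so that the resulting selection is measurable with respect to $\mathfrak{B}_{[0,1]}^{\infty }$ alone, as asserted in the parenthetical remark.
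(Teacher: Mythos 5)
Your proposal is correct and follows essentially the same route as the paper's own proof: continuity of $Q_{n}(\cdot)$ on $A_{n}$ via the positive infimum of $p_{n,j,r_{\ast}}$, boundedness of the integrand, and dominated convergence; measurability in the data for fixed $\theta$ via Tonelli; and then the measurable-selection result (Lemma A3 in P\"{o}tscher and Prucha (1997)) on the compact set $\Theta$. Your additional care with the $r_{\ast}=1$ case and with tracking $\mathfrak{B}_{[0,1]}^{\infty}$-measurability matches what the paper does (implicitly) as well.
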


\begin{proof}
Since $\left\Vert p_{n,j,r_{\ast }}\right\Vert _{\infty }<\infty $ and since
on the event $A_{n}$ also $\inf_{y\in \lbrack 0,1]}p_{n,j,r_{\ast }}>0$
holds, the assumptions on $p(\theta )$ and the dominated convergence theorem
imply that $Q_{n}$ is real-valued and continuous in $\theta $ on the event $%
A_{n}$; and the same conclusion trivially holds on the complement of $A_{n}$%
. Furthermore, $\mathfrak{B}_{[0,1]}^{\infty }\otimes \mathfrak{V}^{\infty }$%
-measurability of $Q_{n}(\theta ):[0,1]^{\infty }\times \mathcal{V}^{\infty
}\rightarrow \mathbb{R}$ for every $\theta \in \Theta $ follows from
Tonelli's Theorem since $p_{n,j,r_{\ast }}$ is jointly measurable (and $%
A_{n} $ is measurable). Since $\Theta $ is compact, existence of a
measurable minimizer then follows, e.g., from Lemma A3 in P\"{o}tscher and
Prucha (1997).
\end{proof}

\begin{proposition}
\label{cons}Suppose Assumptions P1(i),(ii) are satisfied and $r_{\ast }\geq
2 $ holds. If $j_{n}\rightarrow \infty $ as $n\rightarrow \infty $ in such a
way that for some $\delta >1/2$ we have $\sup_{n\geq 1}2^{j_{n}(2\delta
+1)}/n<\infty $ then%
\begin{equation*}
\hat{\theta}_{n}\rightarrow \theta _{0}\text{ in }\Pr \text{-probability as }%
n\rightarrow \infty ,
\end{equation*}%
where $\hat{\theta}_{n}$ has been defined in Section \ref{intermed}.
\end{proposition}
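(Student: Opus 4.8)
The plan is to treat $\hat{\theta}_{n}$ as a standard minimum-contrast estimator and to follow the same route as in the proof of Proposition \ref{consist}, but in the simpler situation where no simulation term is present (the ``ideal'' case $k=\infty$). The deterministic contrast function is $Q(\theta)$ defined in (\ref{Q}); as recorded there, $Q$ is real-valued and continuous on $\Theta$ by dominated convergence (using $\xi_{0}>0$ together with the sup-norm boundedness of $\mathcal{P}_{\Theta}$, which follows from Assumption P1(ii) via Proposition \ref{elem}), and its unique minimizer over the compact set $\Theta$ is $\theta_{0}$ by the identifiability clause of Assumption P1(i). Consequently, by the standard consistency argument for M-estimators (compact parameter space, continuous limit contrast with a well-separated unique minimizer), it suffices to establish the uniform convergence
\begin{equation*}
\sup_{\theta\in\Theta}\left|Q_{n}(\theta)-Q(\theta)\right|\to 0\quad\text{in probability as }n\to\infty .
\end{equation*}
This supremum is measurable since $Q_{n}$ and $Q$ are continuous in $\theta$ and $\Theta$ is separable (continuity of $Q_{n}$ being established in Proposition \ref{exist_1}).

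The decisive step is to control this supremum on the event $A_{n}^{\ast}=\{\inf_{y\in[0,1]}p_{n,j_{n},r_{\ast}}(y)\geq\xi_{0}/2\}$. First I would verify that $\Pr(A_{n}^{\ast})\to 1$: since $p_{0}=p(\theta_{0})\in\mathsf{B}_{\tau}\subseteq\mathcal{B}_{t}$ for $t=\delta\wedge\tau\wedge 1\in(1/2,1]$, and since the hypothesis $\sup_{n}2^{j_{n}(2\delta+1)}/n<\infty$ implies $\sup_{n}2^{j_{n}(2t+1)}/n<\infty$, Corollary \ref{consistency_2} applies and yields both $\Pr(A_{n}^{\ast})\to 1$ and the sup-norm consistency $\|p_{n,j_{n},r_{\ast}}-p(\theta_{0})\|_{\infty}\to 0$ in probability ($p(\theta_{0})$ being the continuous version of $p_{0}$ by Assumption P1(i)). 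On $A_{n}^{\ast}$, which is contained in $A_{n}$ so that the nontrivial branch of $Q_{n}$ is active, a direct calculation --- the $k=\infty$ specialization of the decomposition used in the proof of Proposition \ref{consist} (replace $p_{k,J_{k},r}(\theta)$ by $p(\theta)$, so that the second and third terms drop out) --- gives
\begin{equation*}
Q_{n}(\theta)-Q(\theta)=\int_{0}^{1}\left(p_{n,j_{n},r_{\ast}}-p(\theta_{0})\right)\left[1-\frac{p(\theta)^{2}}{p_{n,j_{n},r_{\ast}}\,p(\theta_{0})}\right]d\lambda .
\end{equation*}

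It then remains only to bound the supremum. Using $\|p(\theta_{0})^{-1}\|_{\infty}\leq\xi_{0}^{-1}$ and $\|p_{n,j_{n},r_{\ast}}^{-1}\|_{\infty}\leq 2/\xi_{0}$ on $A_{n}^{\ast}$, and $\int_{0}^{1}|p_{n,j_{n},r_{\ast}}-p(\theta_{0})|\,d\lambda\leq\|p_{n,j_{n},r_{\ast}}-p(\theta_{0})\|_{\infty}$, one obtains on $A_{n}^{\ast}$ the estimate
\begin{equation*}
\sup_{\theta\in\Theta}\left|Q_{n}(\theta)-Q(\theta)\right|\leq\left\|p_{n,j_{n},r_{\ast}}-p(\theta_{0})\right\|_{\infty}\left(1+2\xi_{0}^{-2}\sup_{\theta\in\Theta}\left\|p(\theta)\right\|_{\infty}^{2}\right),
\end{equation*}
which is precisely the first term in the bound appearing in the proof of Proposition \ref{consist}. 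Since $\mathcal{P}_{\Theta}$ is sup-norm bounded by Assumption P1(ii), the parenthesized factor is a finite constant, and the right-hand side tends to zero in probability by the sup-norm consistency already secured above. Together with $\Pr(A_{n}^{\ast})\to 1$, this delivers the required uniform convergence and hence the claimed consistency. The only substantive ingredient is the sup-norm convergence of the spline projection estimator, which is already available as Corollary \ref{consistency_2}; I therefore anticipate no genuine obstacle, the sole point requiring care being to match the hypothesis on $j_{n}$ to condition (\ref{cond_2}) of that corollary, which is handled by the choice $t=\delta\wedge\tau\wedge 1$.
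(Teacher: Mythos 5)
Your proof is correct and is exactly what the paper intends: the paper omits the proof of Proposition \ref{cons}, stating only that it is ``completely analogous to the proof of Proposition \ref{consist}'', and your argument is precisely that analogue, with the simulation terms dropped so that only the first term of the decomposition survives, and with Corollary \ref{consistency_2} applied with $t=\delta\wedge\tau\wedge 1$ just as in the paper's proof of Proposition \ref{consist}.
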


The proof of this result is completely analogous to the proof of Proposition %
\ref{consist} and is thus omitted.

\begin{remark}
(Measurability issues) (i) For every $J\geq 1$, $r\geq 1$, and $\theta \in
\Theta $, the expressions $\left\Vert p_{k,J,r}(\theta )\right\Vert _{2}$, $%
\left\Vert p_{k,J,r}(\theta )\right\Vert _{\infty }$, and $\left\Vert
p_{k,J,r}(\theta )\right\Vert _{s,2}$ (for $s\leq r-1/2$) are measurable
functions of $v_{1},\ldots ,v_{k}$, since the coefficients $\hat{\gamma}%
_{lJ}^{(r)}(\theta )$ are measurable. This is obvious for the $\mathcal{L}%
^{2}$-norm, but holds in general for the following reason: observe that any
one of the norms mentioned, when restricted to $\mathcal{S}_{J}(r)$, is a
continuous function of the coefficients $\hat{\gamma}_{lJ}^{(r)}(\theta )$
because $\mathcal{S}_{J}(r)$ is finite-dimensional. The same is true if $%
p_{k,J,r}(\theta )$ is replaced by $p_{k,J,r}(\theta )-Ep_{k,J,r}(\theta )$
or $p_{k,J,r}(\theta )-p(\theta )$, in the latter case provided the
respective norm of $p(\theta )$ is finite. [The argument is the same, except
that $\mathcal{S}_{J}(r)$ is to be replaced by the linear span of $\mathcal{S%
}_{J}(r)\cup \{p(\theta )\}$ for establishing the latter claim.] Analogous
statements obviously also hold for $p_{n,j,r_{\ast }}$ for every $j\geq 1$, $%
r\geq 1$. (ii) The reasoning just given in fact establishes that the above
mentioned norms of $p_{k,J,r}(\theta )$ and $p_{k,J,r}(\theta
)-Ep_{k,J,r}(\theta )$ are continuous functions of $\theta $, provided the
coefficients $\hat{\gamma}_{lJ}^{(r)}(\theta )$ (and $E\hat{\gamma}%
_{lJ}^{(r)}(\theta )$) are continuous in $\theta $ (which is, e.g., the case
if $r\geq 2$ and Assumption R(i) holds); consequently, suprema over $\theta $
of the above mentioned norms of $p_{k,J,r}(\theta )$ and $p_{k,J,r}(\theta
)-Ep_{k,J,r}(\theta )$ are then measurable. [We note that this argument does
not apply to suprema of norms of $p_{k,J,r}(\theta )-p(\theta )$, because $%
p(\theta )$ may not vary in a finite-dimensional space when $\theta $
varies.]
\end{remark}

\section{Appendix: Moment Bounds for Empirical Processes\label{App_Moment}}

The following moment inequalities can be deduced from a general theorem in
Gin\'{e} and Koltchinskii (2006) and a refinement with explicit constants in
Gin\'{e} and Nickl (2009a).

\begin{proposition}
\label{exp} Let $Z_{i}$, $i\in \mathbb{N}$, be i.i.d. random variables with
values in a measurable space $(S,\mathcal{A})$ and common law $R$. Let $%
\mathcal{F}$ be a countable $R$-centered class of real valued measurable
functions from $(S,\mathcal{A})$ to $\mathbb{R}$. Assume that $\mathcal{F}$
is uniformly bounded by a finite positive constant $U$ and let further $%
\sigma ,$ $0<\sigma \leq U$, be some constant satisfying $\sup_{f\in 
\mathcal{F}}Ef^{2}(Z_{i})\leq \sigma ^{2}$.

a. Assume that the $\mathcal{L}^{2}(Q)$-covering numbers satisfy 
\begin{equation*}
\sup_{Q}\log N(\mathcal{F},\mathcal{L}^{2}(Q),\tau )\leq v\log \left( \frac{%
AU}{\tau }\right) ,\ \ 0<\tau \leq 2U,
\end{equation*}%
for some $A>e$ and $v\geq 2$ (the supremum extending over all probability
measures $Q$ on $S$). Then, for every $b_{0}>0$ satisfying%
\begin{equation}
n\sigma ^{2}\geq b_{0}vU^{2}\log \left( 5AU/\sigma \right) \text{ \ \ for
all }n\in \mathbb{N},  \label{range}
\end{equation}%
there exists a finite positive constant $b_{1}(v,b_{0})$, that depends only
on $v$ and $b_{0}$, such that for every $n\in \mathbb{N}$ 
\begin{equation*}
E\left\Vert \sum_{i=1}^{n}f(Z_{i})\right\Vert _{\mathcal{F}}^{2}\leq
b_{1}(v,b_{0})n\sigma ^{2}\log \frac{AU}{\sigma }
\end{equation*}%
holds.

b. Assume that the $\mathcal{L}^{2}(Q)$-covering numbers satisfy 
\begin{equation*}
\sup_{Q}\log N(\mathcal{F},\mathcal{L}^{2}(Q),\tau )\leq \left( \frac{%
A^{\prime }U}{\tau }\right) ^{w},\ \ 0<\tau \leq 2U,
\end{equation*}%
for some $0<A^{\prime }<\infty $ and $0<w<2$. Then, for all $n\in \mathbb{N}$
and some positive constant $b_{2}$, that depends only on $A^{\prime },w$, we
have 
\begin{equation*}
E\left\Vert \sum_{i=1}^{n}f(Z_{i})\right\Vert _{\mathcal{F}}\leq b_{2}\sqrt{n%
}U.
\end{equation*}
\end{proposition}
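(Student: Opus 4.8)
The plan is to derive both bounds from standard chaining and concentration results for uniformly bounded empirical processes, as collected in Gin\'{e} and Koltchinskii (2006) and, with explicit constants, in Gin\'{e} and Nickl (2009a). For Part a the route is a first-moment (expectation) bound followed by an upgrade to the second moment via Talagrand's concentration inequality; Part b follows by a direct symmetrization-and-chaining argument that exploits $w<2$. Throughout write $S_n=\sum_{i=1}^{n}f(Z_i)$, so that $\Vert S_n\Vert_{\mathcal{F}}=\sup_{f\in\mathcal{F}}|\sum_{i=1}^{n}f(Z_i)|$, and abbreviate $L=\log(AU/\sigma)$ in Part a. The backbone for Part a is the local expectation inequality: for a countable $R$-centered class bounded by $U$ with $\sup_{f}Ef^{2}\leq\sigma^{2}\leq U^{2}$,
\[
E\Vert S_n\Vert_{\mathcal{F}}\leq c\Big[\sqrt{n}\,\mathcal{J}(\sigma)+U\sigma^{-2}\mathcal{J}(\sigma)^{2}\Big],\qquad \mathcal{J}(\sigma):=\int_{0}^{\sigma}\sqrt{\sup_{Q}\log N(\mathcal{F},\mathcal{L}^{2}(Q),\tau)}\,d\tau ,
\]
with $c$ universal; the first summand is the sub-Gaussian (chaining) term and the second the sub-exponential correction due to boundedness by $U$.

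For Part b I would bypass the correction term and argue directly. Symmetrization gives $E\Vert S_n\Vert_{\mathcal{F}}\leq 2\,E\Vert\sum_{i=1}^{n}\varepsilon_i f(Z_i)\Vert_{\mathcal{F}}$; conditionally on $Z_1,\dots,Z_n$ the Rademacher process is sub-Gaussian for the metric $\sqrt{n}\,\Vert\cdot\Vert_{\mathcal{L}^{2}(\mathbb{P}_n)}$, and since every $f$ is bounded by $U$ the $\mathcal{L}^{2}(\mathbb{P}_n)$-diameter of $\mathcal{F}$ is at most $2U$. Dudley's entropy bound (after the change of variable $u=\tau/\sqrt{n}$) yields $E_{\varepsilon}\Vert\sum_{i=1}^{n}\varepsilon_i f(Z_i)\Vert_{\mathcal{F}}\leq c\sqrt{n}\int_{0}^{2U}\sqrt{\log N(\mathcal{F},\mathcal{L}^{2}(\mathbb{P}_n),u)}\,du$. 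Bounding the empirical covering numbers by the uniform ones and inserting $\log N\leq(A'U/u)^{w}$, the integral is at most $\int_{0}^{2U}(A'U/u)^{w/2}\,du$, which converges precisely because $w<2$ and equals $C(A',w)\,U$. Taking expectation over $Z$ gives $E\Vert S_n\Vert_{\mathcal{F}}\leq b_2\sqrt{n}U$ with $b_2$ depending only on $A'$ and $w$, as claimed.

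For Part a I would first specialize the backbone inequality to the VC-type entropy $\log N\leq v\log(AU/\tau)$. Since $A>e$ and $\sigma\leq U$ one has $\int_{0}^{\sigma}\sqrt{\log(AU/\tau)}\,d\tau\leq C\sigma\sqrt{L}$, so $\mathcal{J}(\sigma)\leq C\sqrt{v}\,\sigma\sqrt{L}$, and using $\mathcal{J}(\sigma)^{2}\sigma^{-2}\leq Cv L$ the first-moment bound becomes
\[
E\Vert S_n\Vert_{\mathcal{F}}\leq C\big[\sqrt{vn}\,\sigma\sqrt{L}+vUL\big].
\]
To reach the second moment I would invoke Talagrand's inequality in Bousquet's form (with the explicit constants recorded in Gin\'{e} and Nickl (2009a)), which supplies the variance bound $\mathrm{Var}(\Vert S_n\Vert_{\mathcal{F}})\leq C(n\sigma^{2}+U\,E\Vert S_n\Vert_{\mathcal{F}})$, whence
\[
E\Vert S_n\Vert_{\mathcal{F}}^{2}\leq \big(E\Vert S_n\Vert_{\mathcal{F}}\big)^{2}+C\big(n\sigma^{2}+U\,E\Vert S_n\Vert_{\mathcal{F}}\big).
\]

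The final step, and the one where hypothesis (\ref{range}) is used decisively, is the bookkeeping showing that every term on the right is at most a constant depending only on $v$ and $b_0$ times $n\sigma^{2}L$. Squaring the first-moment bound produces $vn\sigma^{2}L$ (already of the desired form up to the factor $v$) and $v^{2}U^{2}L^{2}$; writing the latter as $v^{2}[U^{2}\log(5AU/\sigma)]\,L^{2}/\log(5AU/\sigma)$ and inserting $U^{2}\log(5AU/\sigma)\leq n\sigma^{2}/(b_0 v)$ from (\ref{range}), together with $\log(5AU/\sigma)\geq L$, collapses it to $(v/b_0)\,n\sigma^{2}L$. The variance terms are handled the same way: $n\sigma^{2}\leq n\sigma^{2}L$ since $A>e$ forces $L>1$, while $U\,E\Vert S_n\Vert_{\mathcal{F}}$ splits into $U\sqrt{vn}\,\sigma\sqrt{L}$ and $vU^{2}L$, both rendered $O(n\sigma^{2}L)$ by (\ref{range}) after an application of $ab\leq(a^{2}+b^{2})/2$ and the substitution $U^{2}L\leq n\sigma^{2}/(b_0 v)$. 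I expect this algebra to be the main obstacle: the references deliver a bound mixing a sub-Gaussian term of order $\sqrt{n}\sigma\sqrt{L}$ with sub-exponential terms of order $UL$, and the content of the proposition is that exactly in the range (\ref{range}) the sub-exponential and variance terms are dominated, leaving the purely Gaussian bound $n\sigma^{2}L$ with a constant free of $A$, $U$, $\sigma$, and $n$. The only genuinely delicate point is checking that the dependence on $A$ drops out (it survives only inside $L$, where $A>e$ keeps $L>1$) so that $b_1$ depends on nothing but $v$ and $b_0$; Part b, by contrast, is an essentially immediate consequence of the convergence of the entropy integral when $w<2$.
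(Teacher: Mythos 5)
Your proposal is correct, and for Part a it follows essentially the same two-step skeleton as the paper's proof: a first-moment bound of order $\sqrt{vn}\,\sigma\sqrt{L}+vUL$ with $L=\log(AU/\sigma)$, followed by an upgrade to the second moment, and then the bookkeeping in which (\ref{range}), $A>e$ (so $L>1$), and $U\geq \sigma$ absorb all sub-exponential terms into $n\sigma^{2}L$ with a constant depending only on $v$ and $b_{0}$; your algebra (in particular $v^{2}U^{2}L^{2}\leq (v/b_{0})\,n\sigma^{2}L$ via $U^{2}L\leq U^{2}\log(5AU/\sigma)\leq n\sigma^{2}/(b_{0}v)$) checks out. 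The difference lies in which black boxes are opened. The paper cites Proposition 3 of Gin\'{e} and Nickl (2009a) for the expectation bound and Proposition 3.1 of Gin\'{e}, Lata\l a and Zinn (2000), applied to $\mathcal{F}\cup(-\mathcal{F})$, for the moment inequality $E\Vert S_{n}\Vert_{\mathcal{F}}^{2}\leq K^{2}\bigl[(E\Vert S_{n}\Vert_{\mathcal{F}})^{2}+2n\sigma^{2}+4U^{2}\bigr]$, whereas you use the local maximal inequality $E\Vert S_{n}\Vert_{\mathcal{F}}\lesssim \sqrt{n}\,\mathcal{J}(\sigma)+U\sigma^{-2}\mathcal{J}(\sigma)^{2}$ together with the Bousquet--Talagrand variance bound $\mathrm{Var}(\Vert S_{n}\Vert_{\mathcal{F}})\lesssim n\sigma^{2}+U\,E\Vert S_{n}\Vert_{\mathcal{F}}$; these tools are interchangeable here (the extra term $4U^{2}$ in the paper's version is handled by (\ref{range}) just as your variance terms are). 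For Part b the paper simply invokes Theorem 3.1 of Gin\'{e} and Koltchinskii (2006) applied to the rescaled class $\{f/U:f\in\mathcal{F}\}$ with envelope $1$ and $H(x)=(A^{\prime}x)^{w}$, while your symmetrization-plus-Dudley argument, reducing the conditional entropy integral to $\sqrt{n}\int_{0}^{2U}(A^{\prime}U/u)^{w/2}du=C(A^{\prime},w)\sqrt{n}\,U$ (convergent exactly because $w<2$), is a more self-contained derivation of the same bound. Two routine points deserve explicit care in a write-up: quote the backbone expectation inequality in a form valid for all $n$ without side conditions (e.g., the van der Vaart--Wellner form; some statements of this type carry restrictions linking $n$, $\sigma$, $U$), and, when chaining $\sup_{f}\vert\cdot\vert$ rather than increments, include the anchor term $E_{\varepsilon}\bigl\vert\sum_{i}\varepsilon_{i}f_{0}(Z_{i})\bigr\vert\leq\sqrt{n}\,U$, which is of the right order. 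Neither affects the validity of your argument.
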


\begin{proof}
Since the results depend only on the distribution of $\left\Vert
\sum_{i=1}^{n}f(Z_{i})\right\Vert _{\mathcal{F}}$, we may assume w.l.o.g.
that -- as in Gin\'{e} and Koltchinskii (2006) -- the random variables are
realized as coordinate projections on the infinite product space of $(S,%
\mathcal{A})$. The second claim of the proposition then follows directly
from Theorem 3.1 in Gin\'{e} and Koltchinskii (2006) applied to the class $%
\mathcal{F}^{\prime }=\left\{ f/U:f\in \mathcal{F}\right\} $ with envelope $%
F=1$ and $H(x)=\left( A^{\prime }x\right) ^{w}$ for $x\geq 1/2$ and $H(x)=0$
for $0\leq x<1/2$. The first claim is proved as follows: By Proposition 3.1
in Gin\'{e}, Lata\l a and Zinn (2000) (applied to $\mathcal{F\cup (-F)}$ and
observing that $\sigma ^{2}$ in that reference is bounded by $n\sigma ^{2}$
in our notation) we have%
\begin{equation*}
E\left\Vert \sum_{i=1}^{n}f(Z_{i})\right\Vert _{\mathcal{F}}^{2}\leq K^{2}%
\left[ \left( E\left\Vert \sum_{i=1}^{n}f(Z_{i})\right\Vert _{\mathcal{F}%
}\right) ^{2}+2n\sigma ^{2}+4U^{2}\right] ,
\end{equation*}%
where $K$ is a universal constant. We then bound the first term on the
right-hand side by using Proposition 3 in Gin\'{e} and Nickl (2009a) and
simplify the resulting bound using (\ref{range}), $A>e$, and $U/\sigma \geq
1 $ to arrive at the result.
\end{proof}

\section*{References}

\ \ \ \ \thinspace Adams, R.~A. \& J.~J.~F. Fournier (2003): \textit{Sobolev
Spaces. }2nd edition, Elsevier.

Altissimo, F. \& A. Mele (2009): Simulated nonparametric estimation of
dynamic models. \textit{Review of Economic Studies}, forthcoming.

Beran, R. (1977):\ Minimum Hellinger distance estimates for parametric
models. \textit{Annals of Statistics }5, 445-463.

Beran, R. \& P.~W. Millar (1987): Stochastic estimation and testing. \textit{%
Annals of Statistics }15, 1131-1154.

Bickel, P. \& Y. Ritov (2003): Nonparametric estimators that can be
`plugged-in'. \textit{Annals of Statistics }31, 1033-1053.

Carrasco, M., M. Chernov, J.~P. Florens \& E. Ghysels (2007): Efficient
estimation of general dynamic models with a continuum of moment conditions. 
\textit{Journal of Econometrics }140, 529-573.

DeVore, R.~A. \& G.~G. Lorentz (1993): \textit{Constructive Approximation. }%
Springer-Verlag.

Donoho, D.~L. \& R.~C. Liu (1988): The \textquotedblleft
automatic\textquotedblright\ robustness of minimum distance functionals. 
\textit{Annals of Statistics }16, 552-586.

Fermanian, J.~D. \& B. Salani\'{e} (2004): A nonparametric simulated maximum
likelihood estimation method. \textit{Econometric Theory }20, 701-734.

Folland, G. (1999): \textit{Real Analysis: Modern Techniques and Their
Applications, }2nd edition, Wiley.

Gach, F. (2010): \textit{Efficiency in Indirect Inference}. PhD Thesis,
University of Vienna.

Gallant, R. \& G. Tauchen (1996): Which moments to match? \textit{%
Econometric Theory }12, 657-681.

Gallant R. \& J. Long (1997): Estimating stochastic differential equations
efficiently by minimum chi-squared. \textit{Biometrika }84, 125-141.

Gin\'{e}, E. \& V. Koltchinskii (2006): Concentration inequalities and
asymptotic results for ratio type empirical processes. \textit{Annals of
Probability }34, 1143-1216.

Gin\'{e}, E., R. Lata\l a \& J. Zinn (2000): Exponential and moment
inequalities for U-statistics. In: Gin\'{e}, E., Mason, D.~M., Wellner,
J.~A. (eds.): High-dimensional Probability II, Progress in Probability 47,
13-38.

Gin\'{e}, E. \& R. Nickl (2008): Uniform central limit theorems for kernel
density estimators. \textit{Probability Theory and Related Fields} 141,
333-387.

Gin\'{e}, E. \& R. Nickl (2009a): An exponential inequality for the
distribution function of the kernel density estimator, with applications to
adaptive estimation. \textit{Probability Theory and Related Fields},
forthcoming.

Gin\'{e}, E. \& R. Nickl (2009b): Uniform limit theorems for wavelet density
estimators. \textit{Annals of Probability}, forthcoming.

Gourieroux, C., A. Monfort, \& E. Renault (1993): Indirect inference. 
\textit{Journal of Applied Econometrics }8, 85-118.

Gourieroux, C. \& A. Monfort (1996): \textit{Simulation-based econometric
methods. }Oxford University Press.

Huber, P.~J. (1972): Robust statistics: A review. \textit{Annals of
Mathematical Statistics} 43, 1041-1067.

Jiang, W. \& B. Turnbull (2004): The indirect method: Inference based on
intermediate statistics -- A synthesis and examples. \textit{Statistical
Science }19, 239-263.

Lindsay (1994):\ Efficiency versus robustness: The case for minimum
Hellinger distance and related methods. \textit{Annals of Statistics }22,
1081-1114.

Lorentz, G.~G., v.Golitschek, M. \& Y. Makovoz (1996): \textit{Constructive
Approximation: Advanced Problems. }Springer-Verlag.

Millar, P.~W. (1981): Robust estimation via minimum distance methods. 
\textit{Zeitschrift f\"{u}r Wahrscheinlichkeitstheorie und Verwandte Gebiete}
55, 73-89.

Nickl, R. (2007): Donsker-type theorems for nonparametric maximum likelihood
estimators. \textit{Probability Theory and Related Fields} 138, 411-449.

P\"{o}tscher, B.~M. \& I.~R. Prucha (1997): \textit{Dynamic Nonlinear
Econometric Models: Asymptotic Theory. }Springer-Verlag.

Shadrin, A.~Yu. (2001): The $L_{\infty }$-norm of the $L_{2}$-spline
projector is bounded independently of the knot sequence: a proof of de
Boor's conjecture. \textit{Acta Mathematica }187, 59-137.

Smith, A. (1993): Estimating nonlinear time-series models using simulated
vector autoregressions. \textit{Journal of Applied Econometrics }8, 63-84.

van der Vaart, A.~W. \& J.~A. Wellner (1996): \textit{Weak Convergence and
Empirical Processes With Applications to Statistics. }Springer-Verlag.

\pagebreak

\end{document}